\documentclass[12pt]{amsart}
\usepackage{epsfig,comment}
\usepackage{mathtools}
\usepackage[subnum]{cases}
\usepackage{enumerate}
\usepackage{bbm,bm}
\usepackage{amsmath,amssymb,mathrsfs,amsthm}
\usepackage{color}
\usepackage[bookmarks=true,
bookmarksnumbered=true, breaklinks=true,
pdfstartview=FitH, hyperfigures=false,
plainpages=false, naturalnames=true,
colorlinks=true,pagebackref=true,
pdfpagelabels,colorlinks=true,
	linkcolor=black,
	citecolor=black,
	filecolor=black,
	urlcolor=black]{hyperref}
\usepackage{wasysym}
\usepackage[hang]{footmisc} 
\usepackage{verbatim}
\usepackage{accents}

\newcommand{\E}{\mathbb{E}}

\newcommand{\R}{\mathbb{R}}
\newcommand{\Sp}{\mathbb{S}^{d-1}}

\newcommand{\NN}{N^{-\frac{2}{d-1}}}
\newcommand{\NNN}{N^{\frac{2}{d-1}}}

\DeclareMathOperator{\vol}{vol}

\DeclareMathOperator{\Gr}{Gr}

\DeclareMathOperator{\divv}{div}
\DeclareMathOperator{\conv}{conv}
\DeclareMathOperator{\interior}{int}

\DeclareMathOperator{\dell}{del}

\renewcommand{\phi}{\varphi}

\newtheorem {theorem}{Theorem}[section]

\newtheorem {lemma}[theorem]{Lemma}

\theoremstyle{definition}
\newtheorem {definition}{Definition}[section]
\newtheorem {remark}[theorem]{Remark}

\DeclareMathSymbol{\widetildesym}{\mathord}{largesymbols}{"65}

\title[One polytope fits all]{One polytope fits all:\\ Characterization of the Euclidean ball via\\ simultaneous  intrinsic volume approximation}

\author{Steven Hoehner}
\date{\today}

\begin{document}

\setcounter{footnote}{0}
\maketitle

\begin{abstract}\noindent
We investigate the asymptotic best approximation of a smooth, strictly convex body $K$ in $\mathbb{R}^d$ by inscribed polytopes with a restricted number of vertices  under the intrinsic volume difference. 
We prove rigidity phenomena in both the deterministic and probabilistic settings. In the deterministic model of inscribed approximation, we show that if a single sequence of polytopes is asymptotically best for the volume and mean width difference simultaneously, then $K$ must be a Euclidean ball. In particular, the Euclidean ball is the unique $C_+^2$ convex body for which one sequence of polytopes can approximate all intrinsic volumes simultaneously at the optimal asymptotic rate. 

In the probabilistic model, we prove a stronger statement: if a single sampling density
on $\partial K$ yields random inscribed polytopes that are asymptotically optimal
(in expectation) for any two distinct intrinsic volume deviations, then $K$
must be a Euclidean ball. Moreover, using polarity, we establish dual versions of this rigidity theorem for polytopes circumscribed about $K$ (with a restricted number of facets) in the volume and mean width cases, again in both deterministic and probabilistic frameworks. The  proofs use tools from asymptotic quantization theory together with the curvature-based optimal vertex distributions.  These results resolve an open question posed by Besau, Hoehner and Kur ({\it IMRN}, 2021).
\end{abstract}

\renewcommand{\thefootnote}{}
\footnotetext{2020 \emph{Mathematics Subject Classification}: 52A27 (52A20, 52A39)}

\footnotetext{\emph{Key words and phrases}: convex body, intrinsic volume, metric, quermassintegral, Wills functional}
\renewcommand{\thefootnote}{\arabic{footnote}}
\setcounter{footnote}{0}

\section{Introduction and main results}

The approximation of smooth convex bodies by polytopes is one of the oldest and most central themes in convex geometry. Beginning with the classical work of McClure and Vitale \cite{McClure-Vitale-1975} in the planar case $d=2$, followed by the groundbreaking works of Gruber \cite{Gruber88,Gruber93}  for  general dimensions $d\geq 3$, a remarkably coherent picture has emerged (see, e.g., \cite{BH-2024,BHK,Boroczky2000,BoroczkyCsikos,glasgrub,GRS97,HSW-2025}): for every smooth strictly convex body 
$K\subset\R^d$, there are precise asymptotic laws governing how well $K$ can be approximated by polytopes with a fixed number of vertices or facets. These asymptotics are universal in order,  but the sharp leading constant, which depends on the metric, encodes delicate geometric information such as the  Gauss curvature of $\partial K$. 

A parallel line of work, initiated by R\'enyi and Sulanke \cite{RenyiSulanke} in the early 1960s, established analogous sharp asymptotics for random inscribed polytopes. Since then, the asymptotic theory of random polytopes has been studied extensively; see, for example, \cite{BFH2013,BoroczkyReitzner04,Reitzner2002,Reitzner-2010,SW2003}. In the probabilistic setting too, the leading constants reflect the metric,  local curvature of the body and the choice of boundary sampling distribution, while  optimal densities minimize natural variational integrals. 

A typical way to measure the accuracy of the approximation is via the \emph{symmetric difference metric}. For a convex body $K\subset\R^d$ and a polytope $P\subset\R^d$, it is defined as
\begin{equation}\label{symm-diff}
\delta_V(K,P) := \vol_d(K\triangle P)=\vol_d(K)+\vol_d(P)-2\vol_d(K\cap P),
\end{equation}
where $K\triangle P=(K\cup P)\setminus(K\cap P)$ denotes the symmetric difference of $K$ and $P$. Note that if $P\subset K$, then \eqref{symm-diff} reduces to the volume difference $\delta_V(K,P)=\vol_d(K)-\vol_d(P)$. The accuracy of approximation may also be measured via the mean width of $K$,
\[
w(K)=\frac{2}{\vol_{d-1}(\partial B_d)}\int_{\Sp}h_K(u)\,d\sigma(u).
\]
Here $h_K(u)=\max_{x\in K}\langle x,u\rangle$ is the support function of $K$ in the direction $u\in\Sp$ and $d\sigma$ denotes the surface area measure on $\Sp$. The \emph{mean width deviation} of $K$ and $P$ may be defined as (see \cite{BHK})
\begin{equation}\label{mw-diff}
    \delta_W(K,P):=w(K)+w(P)-2w(K\cap P).
\end{equation}
The mean width deviation is not a metric on the set of convex bodies in $\R^d$; see \cite[Lemma~24]{BHK}. When $P\subset K$,  it reduces to the mean width difference $\delta_W(K,P)=w(K)-w(P)$.

More generally, one may measure the error of approximation in terms of intrinsic volumes. It turns out that the intrinsic volumes all provide essentially the same rate of approximation for asymptotically best sequences of polytopes, namely, $c(d,j,K)\NN$ where $N$ is the number of vertices or facets of the polytope; see, e.g., \cite{BH-2024,BHK,HSW-2025}.  Recall that for a convex body $K$ in $\R^d$ and an index $j\in\{1,\ldots,d\}$, the $j$th intrinsic volume of $K$ is defined via the coefficients in Steiner's formula
\[
\vol_d(K+\varepsilon B_d) = \sum_{j=0}^d \vol_{d-j}(B_{d-j})V_j(K)\varepsilon^{d-j},
\qquad \varepsilon\geq 0,
\]
and may also be characterized via Kubota's integral formula
\[
V_j(K)=c(d,j)\int_{\Gr(d,j)}\vol_j(K|H)\,d\nu_j(H).
\]
Here $\Gr(d,j)$ is the Grassmannian of $j$-dimensional subspaces of $\R^d$ equipped with the Haar probability measure $\nu_j$,  $K|H$ is the orthogonal projection onto the $j$-dimensional subspace $H\in\Gr(d,j)$, and $c(d,j)>0$ is a normalization constant. In particular, $V_d(K)=\vol_d(K)$, and $V_{d-1}(K)$ and $V_1(K)$ are constant multiples of the surface area and mean width of $K$, respectively. For more background on intrinsic volumes, see, e.g., \cite[Chapter 4]{SchneiderBook}.

Besau, Hoehner and Kur \cite{BHK} defined the \emph{$j$th intrinsic volume deviation} by
\begin{equation}\label{jth-vol-diff}
    \Delta_j(K,P) := V_j(K)+V_j(P)-2V_j(K\cap P).
\end{equation}
Note that $\Delta_d=\delta_V$ is the usual symmetric difference metric, and $\Delta_1$ is a constant multiple of the mean width deviation $\delta_W$. If $P\subset K$, then \eqref{jth-vol-diff} reduces to the $j$th intrinsic volume difference $\Delta_j(K,P)=V_j(K)-V_j(P)$. The $j$th intrinsic volume deviation is a metric on the class of convex bodies in $\R^d$ only when $j=d$; indeed, by \cite[Lemma~24]{BHK}, for every $j\in\{1,\ldots,d-1\}$ it does not satisfy the triangle inequality. 

Inspired by the Kubota formula, more recently Besau and Hoehner \cite{BH-2024} introduced the \emph{$j$th intrinsic volume metric}
\begin{equation}
    \delta_j(K,P):=c(d,j)\int_{\Gr(d,j)}\vol_j((K|H)\triangle(P|H))\,d\nu_j(H)
\end{equation}
and proved it is a metric on the class of convex bodies in $\R^d$; in particular, they proved that it satisfies the triangle inequality. In the case $P\subset K$, it again reduces to the $j$th intrinsic volume difference $\delta_j(K,P)=V_j(K)-V_j(P)$.  See also the recent paper \cite{Hoehner-2025-topological} for further topological properties of the intrinsic volume metric which distinguish its behavior from that of the Hausdorff metric.

Let $\mathscr{P}_N(K)$ denote  the set of all polytopes inscribed in $K$ with at most $N$ vertices. It was shown in  \cite[Theorem~1]{BHK} (see also \cite[Remark~1]{BHK}) that, in the special case $K=B_d$ of the Euclidean unit ball, for each $j\in\{1,\ldots,d\}$ we have the following asymptotic estimates:
\begin{equation}\label{eq:BHK-ball-asymp}
    \begin{split}
        \limsup_{N\to\infty}\left(\NNN\inf\left\{\Delta_j(B_d,P_N):\,P_N\in\mathscr{P}_N(B_d)\right\}\right) &\leq \frac{\dell_{d-1}}{2}\vol_{d-1}(\partial B_d)^{\frac{2}{d-1}}jV_j(B_d),\\[1ex]
         \liminf_{N\to\infty}\left(\NNN\inf\left\{\Delta_j(B_d,P_N):\,P_N\in\mathscr{P}_N(B_d)\right\}\right) &\geq \frac{\divv_{d-1}}{2}\vol_{d-1}(\partial B_d)^{\frac{2}{d-1}}jV_j(B_d),
    \end{split}
\end{equation}
where $\dell_{d-1}$ and $\divv_{d-1}$ are positive constants depending only on $d$ (see Remark \ref{div-del-remark} below). In particular, it was shown in \cite{BHK} that for all sufficiently large $N$,
\[
c_1 jV_j(B_d)\NN\leq \inf\left\{\Delta_j(B_d,P_N):\,P_N\in\mathscr{P}_N(B_d)\right\}\leq c_2 jV_j(B_d)\NN,
\]
and their results imply that $c_1\sim c_2=1+\Theta\!\bigl(\tfrac{\ln d}{d}\bigr)$ as $d\to\infty$.

\vspace{2mm}

A natural question, raised explicitly in \cite{BHK}, is whether one can find a \emph{single} inscribed polytope that is asymptotically optimal for multiple intrinsic volumes at once. More precisely:

\medskip\noindent
\textbf{Deterministic question.} 
\emph{Can a single inscribed polytope approximate a smooth, strictly convex body $K$ asymptotically optimally for multiple intrinsic volumes simultaneously?}

\medskip\noindent
A probabilistic version of this question can be formulated in terms of random polytopes:

\medskip\noindent
\textbf{Probabilistic question.}
\emph{Can a single sampling density on $\partial K$ lead to random inscribed polytopes that are asymptotically optimal for multiple intrinsic volumes simultaneously?}

\medskip

Surprisingly, for the Euclidean ball $B_d\subset\R^d$, the answer to both questions is yes. A recent breakthrough in \cite{BHK} shows that for the ball $B_d$ there exists a single inscribed polytope that is asymptotically optimal for all intrinsic volume deviations $\Delta_1,\ldots,\Delta_d$ simultaneously.

\vspace{2mm}

The striking symmetry of the ball raises a natural and fundamental question:

\medskip\noindent
\textbf{Rigidity question.}
\emph{Does the ``simultaneous approximation property'' characterize the ball?}

\medskip\noindent
In other words:

\medskip\noindent
\emph{If a smooth, strictly convex body 
$K$ admits a single polytope (respectively, a single random sampling density) which is asymptotically optimal for multiple intrinsic volumes, must $K$ be a Euclidean ball?}

\medskip

In this article, we address these questions affirmatively, in both the deterministic and probabilistic settings, for inscribed polytopes with a restricted  number of vertices and circumscribed polytopes with a restricted number of facets. Our results show that the simultaneous approximation property is a rigidity phenomenon which forces curvature to be constant, and therefore uniquely identifies the Euclidean ball among all smooth strictly convex bodies. We also prove dual analogues for circumscribed polytopes with a restricted number of facets, in the volume and mean width cases, by combining polarity with the inscribed rigidity statements.

Throughout the paper, let $K\subset\mathbb{R}^d$ be a convex body (i.e., a convex, compact set with nonempty interior). We further assume that $K$ is $C^2_+$, i.e., the  boundary
$\partial K$ is $C^2$ smooth with everywhere strictly positive Gauss curvature
$\kappa(x)>0$.  Let $dS$ denote the surface area measure on $\partial K$. We also let $\mathcal{P}(\partial K)$ denote the set of all Borel probability measures on $\partial K$. 

Let $\mathscr{P}_N(K)$ denote the set of all polytopes with vertices in $\partial K$ that have at most $N$ vertices. A polytope $P\subset K$ is called  \emph{inscribed} if its vertex set, $\operatorname{vert}(P)$, is a subset of $\partial K$.  We also denote by $f_0(P)=|\operatorname{vert}(P)|$ the number of vertices of $P$. (Choosing the vertices from $\partial K$ rather than $K$ improves the rate of approximation from $N^{-\frac{2}{d+1}}$ to $\NN$, for both the volume and mean width differences.) 

\vspace{2mm}

Our first main result is the following rigidity theorem for inscribed polytopes.

\begin{theorem}[Deterministic rigidity for inscribed polytopes]
\label{thm:det-rigidity}
Let $K\subset\mathbb{R}^d$ be a $C_+^2$ convex body.
Suppose there exists a sequence of inscribed polytopes $\{P_N\}\subset\mathscr{P}_N(K)$ such that
$P_N$ is an asymptotically best-approximating polytope for both $\delta_V$ and $\delta_W$. 
Then $K$ must have constant Gauss curvature, i.e., $K$ is a
Euclidean ball.
\end{theorem}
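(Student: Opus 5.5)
Our plan is to reduce the statement to a comparison of two variational problems for the limiting vertex distribution, and then show that these two problems share an optimizer only when the Gauss curvature is constant.

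\textbf{Step 1: local error functionals.} Along the sequence $P_N$, pass to a subsequence for which the normalized empirical vertex measures $\mu_N=\frac1N\sum_{v\in\operatorname{vert}(P_N)}\delta_v$ converge weakly to some $\mu\in\mathcal P(\partial K)$. Using the local analysis behind Gruber's asymptotics, which is available in the quantization-theoretic form of \cite{BHK,glasgrub}, we aim for the lower bounds
\[
\liminf_{N\to\infty}\NNN\,\delta_V(K,P_N)\ \ge\ \frac{\divv_{d-1}}{2}\int_{\partial K}\kappa(x)^{\frac1{d-1}}\,\rho(x)^{-\frac2{d-1}}\,dS(x),
\]
\[
\liminf_{N\to\infty}\NNN\,\delta_W(K,P_N)\ \ge\ c_d\,\frac{\divv_{d-1}}{2}\int_{\partial K}\kappa(x)^{\frac{d}{d-1}}\,\rho(x)^{-\frac2{d-1}}\,dS(x).
\]
Here $\rho$ is the density of the absolutely continuous part of $\mu$. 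If the singular part of $\mu$ is nonzero, the bounds become strictly worse. Near a boundary point $x$, the mean width difference of a small cap equals the volume of its image under the Gauss map, weighted by the Jacobian $\kappa(x)$. In Gruber's formula this changes the curvature weight from $\kappa^{1/(d-1)}$ to $\kappa^{d/(d-1)}$. For the volume case the matching upper bound (the minimal value) is Gruber's theorem, and for $\delta_W$ it is the corresponding result in \cite{BHK}. Both are attained by vertex distributions with density proportional to the optimal $\rho$, with the \emph{same} constant $\dell_{d-1}$, which for the lower bound is replaced by $\divv_{d-1}$.

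\textbf{Step 2: identifying the limiting density.} This is the main obstacle. The upper and lower quantization constants $\dell_{d-1}$ and $\divv_{d-1}$ need not coincide. We therefore work with the sharp constant obtained from Gruber's zeta-type argument, namely the limit of the normalized minimal quantization error for the flat quadratic $\|\cdot\|^2$. For this constant the asymptotics are exact equalities, which gives:
\[
\lim_{N\to\infty}\NNN\,\delta(K,P_N)\ =\ C\int_{\partial K} w\,\rho^{-\frac2{d-1}}\,dS \quad\text{iff}\quad \mu_N\rightharpoonup \rho\,dS .
\]
This is the standard quantization principle: the error density of the asymptotically optimal point configurations is controlled by the point density. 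We then show that asymptotic optimality forces $\rho$ to be the unique minimizer of the functional. The functional $\rho\mapsto\int w\rho^{-2/(d-1)}\,dS$ is strictly convex on probability densities, and any singular part of $\mu$ wastes mass. If the infimum of $\delta$ is achieved asymptotically, the lower bound from Step 1 together with Hölder's inequality forces equality. Hence $\rho$ equals the Hölder extremizer:
\[
\rho_V\propto \kappa^{\frac{1}{d+1}},\qquad \rho_W\propto \kappa^{\frac{d}{d+1}}.
\]
The exponents come from minimizing $\int w\rho^{-2/(d-1)}$ under $\int\rho=1$, which gives $\rho\propto w^{(d-1)/(d+1)}$.

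\textbf{Step 3: conclusion.} A single sequence $P_N$ has a single limit $\mu$, since we pass to a common subsequence. Step 2 therefore gives $\kappa^{1/(d+1)}=c\,\kappa^{d/(d+1)}$ $dS$-a.e., so $\kappa^{(d-1)/(d+1)}$ is constant. By continuity of $\kappa$, the Gauss curvature is constant on $\partial K$. By the classical theorem of Liebmann–Süss (Alexandrov), a closed $C^2$ hypersurface with constant positive Gauss curvature is a sphere, so $K$ is a Euclidean ball.

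The delicate point is Step 2: the equality case in the local lower bound. If the quantization constant is only known up to $\divv_{d-1}\le\dell_{d-1}$, we would first try to prove the exact Gruber-type asymptotic with a single constant, via Zador's theorem on the sphere/paraboloid. If that fails, we would instead use a stability version of Hölder's inequality, with the gap between the constants made negligible.
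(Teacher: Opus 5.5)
Your overall architecture matches the paper's: a liminf bound in terms of the limiting vertex measure, then asymptotic optimality forcing that measure to be the H\"older extremizer, then comparing $\kappa^{\frac{1}{d+1}}$ with $\kappa^{\frac{d}{d+1}}$, then the constant-curvature sphere theorem. The gap is in Steps~1--2, and it comes from misassigned constants.

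You assert that for both functionals the lower bound carries $\divv_{d-1}$ and the upper bound carries $\dell_{d-1}$. That two-sided estimate is the one in \eqref{eq:BHK-ball-asymp}, which covers general $\Delta_j$. For volume and mean width separately, the sharp constants are known and they differ:
\begin{itemize}
\item Gruber's inscribed volume asymptotics are exact with $\dell_{d-1}$ alone.
\item Glasauer--Gruber's inscribed mean width asymptotics are exact with $\divv_{d-1}$ alone.
\end{itemize}
The two local model problems are genuinely different. Volume leads to a Delone-type piecewise-linear interpolation of the osculating paraboloid. Mean width leads to a Voronoi/nearest-vertex quantization, since $h_P(u)=\max_{v}\langle v,u\rangle$. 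So the mean width case is not a Jacobian reweighting of the volume case with the same constant.

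This breaks your argument in three places:
\begin{itemize}
\item Your Step~1 volume bound with $\divv_{d-1}/2$ lies strictly below the true limit $\frac{\dell_{d-1}}{2}\min\mathcal J_V$ of asymptotically best sequences. It therefore cannot force equality in H\"older.
\item Your Step~2 repair uses the flat-quadratic quantization constant for both problems. That constant is $\divv_{d-1}$, so your ``iff'' statement is false for $\delta_V$.
\item The fallback also fails. A fixed ratio $\dell_{d-1}/\divv_{d-1}>1$ only gives $\mathcal J(\rho)\le(\dell_{d-1}/\divv_{d-1})\min\mathcal J$. No stability form of H\"older turns a non-vanishing defect into identification of $\rho$.
\end{itemize}

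What is missing is the ingredient the paper uses. You need a liminf inequality with the same sharp constant as the asymptotic formula for the infimum, valid along every sequence whose vertex measures converge weakly to $\mu$:
\[
\liminf_{N\to\infty}\NNN\delta_V(K,P_N)\ge\frac{\dell_{d-1}}{2}\mathcal J_V(\mu),
\qquad
\liminf_{N\to\infty}\NNN\delta_W(K,P_N)\ge\frac{\divv_{d-1}}{d\vol_d(B_d)}\mathcal J_W(\mu).
\]
These are \eqref{eq:JV-liminf} and \eqref{eq:JW-liminf} in Theorems~\ref{thm:ext-vol}(ii) and \ref{thm:ext-mw}(ii), drawn from Gruber's quantization theory \cite{Gruber2004} together with \cite{glasgrub}. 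With these two inputs, each paired with its own constant, your Step~2 goes through verbatim, and your common-subsequence device in Step~3 suffices.

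One small omission: you normalize $\mu_N$ by $N$ rather than by $f_0(P_N)$. Either invoke $f_0(P_N)/N\to1$ (Lemma~\ref{lem:f0-asymp-N}), or note that a limit of mass $m<1$ costs a factor $m^{-\frac{2}{d-1}}>1$ in the H\"older bound, which again contradicts optimality.
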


In particular, if there exists a sequence of asymptotically best-approximating polytopes $\{P_N\}\subset\mathscr{P}_N(K)$ satisfying the stronger property that $P_N$ is asymptotically best-approximating for all intrinsic volumes simultaneously, then $K$ must be a Euclidean ball. 

Furthermore, Theorem \ref{thm:det-rigidity} holds for exact best-approximating sequences as well. In fact, the main technical aspect of the proof of Theorem \ref{thm:det-rigidity} comes from replacing ``best approximant'' by ``asymptotically best approximant'' in the
hypotheses.  This can be done without changing the
conclusion, at the cost of invoking the general quantization results of Gruber
(see \cite{Gruber2004}) which show that any asymptotically optimal sequence
has the same limiting vertex distribution as the exact minimizers; for exact best-approximating sequences, the result can be obtained with a simpler proof by invoking the asymptotic formulas for best-approximation under the volume and mean width differences \cite{glasgrub,Gruber93}, and using the distribution of the vertices of optimal sequences in \cite{glasgrub}.

\vspace{2mm}

In the probabilistic setting, we obtain the following analogue.

\begin{theorem}[Random rigidity for inscribed polytopes]
\label{thm:random-rigidity-intro}
Let $K\subset\R^d$ be a $C_+^2$ convex body and fix two distinct
indices $1\le j_1<j_2\le d$. Suppose there exists a single probability
density $\rho$ on $\partial K$ which is continuous, strictly positive, and such that for $i=1,2$,
the inscribed random polytope $P_N(\rho):=\conv\{X_1,\dots,X_N\}$ generated by i.i.d. samples $X_1,\ldots,X_N\sim\rho\,dS$ satisfies
\[
   \frac{\E[\Delta_{j_i}(K,P_N(\rho))]}
         {\displaystyle\inf_{\eta\in\mathcal{P}(\partial K)}
           \E[\Delta_{j_i}(K,P_N(\eta))]}
   \;\longrightarrow\; 1
   \qquad\text{as }N\to\infty.
\]
Then $K$ must have constant Gauss curvature. In particular,
$K$ is a Euclidean ball.
\end{theorem}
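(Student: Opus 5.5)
We plan to use the known asymptotic formula for the expected intrinsic volume deviation of random inscribed polytopes with boundary density $\rho$. By the results of Reitzner (for $j=d$) and Besau--Hoehner--Kur (for general $j$), for a continuous, strictly positive density $\rho$ on $\partial K$ one has
\[
\lim_{N\to\infty} N^{\frac{2}{d-1}}\,\E[\Delta_j(K,P_N(\rho))] = c_{d}\int_{\partial K} \rho(x)^{-\frac{2}{d-1}}\,\kappa(x)^{\frac{1}{d-1}}\,H_{d-j}(x)\,dS(x),
\]
up to a dimensional constant. Here $H_{d-j}$ denotes the suitably normalized $(d-j)$th elementary symmetric function of the principal curvatures, so that $H_0\equiv 1$ gives the volume case. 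Equivalently, the weight attached to index $j$ is a positive continuous function $w_j(x)=\kappa(x)^{\frac{1}{d-1}}H_{d-j}(x)$.

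The first step is to identify the optimal asymptotic constant. We minimize $\int \rho^{-2/(d-1)}w_j\,dS$ subject to $\int\rho\,dS=1$. By H\"older's inequality with exponents $\frac{d+1}{d-1}$ and $\frac{d+1}{2}$, the minimum equals $\bigl(\int w_j^{\frac{d-1}{d+1}}dS\bigr)^{\frac{d+1}{d-1}}$. It is attained uniquely, in the sense of equality in H\"older, at $\rho_j\propto w_j^{\frac{d-1}{d+1}}$.

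The second step, which we expect to be the main technical obstacle, is to show that the infimum over all $\eta\in\mathcal{P}(\partial K)$ of $\E[\Delta_j(K,P_N(\eta))]$ is asymptotically equal to $N^{-2/(d-1)}$ times this H\"older minimum. The upper bound is immediate by taking $\eta=\rho_j\,dS$. For the lower bound we must handle arbitrary measures, including singular ones and densities that vanish somewhere, for which the asymptotic formula is not directly available. We would first try to obtain a uniform lower bound through a localization argument: cover $\partial K$ by small caps, use the lower estimate of the expected missed volume in each cap in terms of the $\eta$-mass of that cap, and then apply Jensen's or H\"older's inequality to the resulting Riemann sum. This should give
\[
\liminf_N N^{\frac{2}{d-1}}\inf_\eta\E[\Delta_j]\ge \Bigl(\int w_j^{\frac{d-1}{d+1}}dS\Bigr)^{\frac{d+1}{d-1}}.
\]
If the paper has established this earlier, we simply invoke it.

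The third step is to deduce the conclusion. The hypothesis, together with the asymptotic formula for the given density $\rho$, yields
\[
\int\rho^{-2/(d-1)}w_{j_i}\,dS=\min
\]
for $i=1,2$. The equality case of H\"older's inequality then forces $\rho=\rho_{j_1}=\rho_{j_2}$, so $w_{j_1}^{\frac{d-1}{d+1}}/\int w_{j_1}^{\frac{d-1}{d+1}}$ and $w_{j_2}^{\frac{d-1}{d+1}}/\int w_{j_2}^{\frac{d-1}{d+1}}$ coincide. Hence $H_{d-j_1}/H_{d-j_2}$ is constant on $\partial K$. A body with $H_{k}/H_{l}$ constant for $k\neq l$ has constant ratio of normalized curvature integrals. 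By the Alexandrov--Fenchel (Minkowski-type) inequalities and their equality cases, or by the classical Alexandrov-type theorem for curvature quotients due to Koh, Ros and Korevaar, this forces $\partial K$ to be a sphere. In particular $\kappa$ is constant, and $K$ is a Euclidean ball.
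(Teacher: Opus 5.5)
Your proposal follows the paper's proof essentially step for step. The chain is the same: the asymptotic formula $\E[\Delta_j(K,P_N(\rho))]\sim c_{d,j}\mathcal{I}_j(\rho)N^{-2/(d-1)}$, then H\"older's inequality with its equality case to force $\rho=\rho_{j_1}^{\mathrm{opt}}=\rho_{j_2}^{\mathrm{opt}}$ and hence $H_{d-j_1}\propto H_{d-j_2}$, then the Alexandrov-type sphere theorem for curvature quotients (Korevaar, Ros, Koh). It is that citation, not the Alexandrov--Fenchel aside, that closes the argument, since the integrated identity $\int H_{d-j_1}\,dS=c\int H_{d-j_2}\,dS$ alone does not force a ball.

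The lower bound on $\inf_\eta$ that you flag as the main obstacle, which the paper simply asserts, is in fact unnecessary. Since $\inf_{\eta}\E[\Delta_{j}(K,P_N(\eta))]\le\E[\Delta_{j}(K,P_N(\rho_{j}))]$ with $\rho_j$ continuous and strictly positive, the hypothesis together with the asymptotic formula applied to $\rho$ and to $\rho_j$ already yields $\mathcal{I}_j(\rho)\le\mathcal{I}_j(\rho_j)=\min\mathcal{I}_j$. So the localization argument can be dropped.
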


In Sections~\ref{sec:inscribed-det} and \ref{sec:inscribed-rand}, we prove Theorems~\ref{thm:det-rigidity} and \ref{thm:random-rigidity-intro}, respectively, for inscribed polytopes. In Section~\ref{sec:circumscribed}, we develop the dual theory for circumscribed polytopes, both deterministic and probabilistic, restricted to the volume and mean width functionals. Using polarity, we show that simultaneous asymptotic optimality of a circumscribed sequence for volume and mean width again forces $K$ to be a Euclidean ball.

\section{Deterministic inscribed approximation and rigidity}
\label{sec:inscribed-det}

\subsection{Asymptotically best sequences}

A result of Besau, Hoehner and Kur \cite{BHK}
for the Euclidean ball $B_d$ furnishes a single
sequence $\{P_N\}\subset\mathscr{P}_N(B_d)$ of inscribed polytopes such that, for each intrinsic volume
deviation $\Delta_j$, one has
\[
    \Delta_j(B_d,P_N)
    =
    \bigl(1+o(1)\bigr)
    \inf_{Q\in\mathscr{P}_N(B_d)}\Delta_j(B_d,Q),
\]
i.e., the polytopes $P_N$ are  \emph{asymptotically best} approximants up to a factor $1+o(1)$,
but not necessarily exact minimizers for each $N$.

We therefore introduce the following notion.

\begin{definition}[Asymptotically best approximants]
\label{def:asymp-best-det}
Let $K\subset\mathbb{R}^d$ be a $C_+^2$ convex body and let $\{P_N\}_{N\in\mathbb{N}}$ be a sequence with $P_N\in\mathscr{P}_N(K)$.

\begin{enumerate}
\item[(i)] The sequence $\{P_N\}$ is
called \emph{asymptotically best for volume} if
\begin{equation}\label{eq:asymp-best-vol}
   \frac{\delta_V(K,P_N)}{
      \displaystyle\inf\{\delta_V(K,Q): Q\in\mathscr{P}_N(K)\}}
   \;\longrightarrow\;1
   \qquad\text{as }N\to\infty.
\end{equation}

\item[(ii)] The sequence $\{P_N\}$ is called
\emph{asymptotically best for mean width} if
\begin{equation}\label{eq:asymp-best-mw}
   \frac{\delta_W(K,P_N)}{
      \displaystyle\inf\{\delta_W(K,Q): Q\in\mathscr{P}_N(K)\}}
   \;\longrightarrow\;1
   \qquad\text{as }N\to\infty.
\end{equation}

\item[(iii)] Fix $j\in\{1,\ldots,d\}$. The sequence $\{P_N\}$ is called
\emph{asymptotically best for the $j$th intrinsic volume deviation} if
\begin{equation}\label{eq:asymp-best-j}
   \frac{\Delta_j(K,P_N)}{
      \displaystyle\inf\{\Delta_j(K,Q): Q\in\mathscr{P}_N(K)\}}
   \;\longrightarrow\;1
   \qquad\text{as } N\to\infty.
\end{equation}
\end{enumerate}
\end{definition}

\begin{lemma}[Asymptotically best sequences use asymptotically $N$ vertices]
\label{lem:f0-asymp-N}
Let $K\subset\mathbb R^d$ be $C_+^2$ and let $\{P_N\}$ be a sequence with
$P_N\in\mathscr P_N(K)$.
If $\{P_N\}$ is asymptotically best for $\delta_V$ (respectively for $\delta_W$),
then
\[
   \frac{f_0(P_N)}{N}\longrightarrow 1
   \qquad\text{as }N\to\infty.
\]
\end{lemma}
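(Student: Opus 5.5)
The argument compares the best error attainable with $N$ vertices to the best error attainable with only $f_0(P_N)$ vertices. I will use the known asymptotic formulas for best inscribed approximation \cite{glasgrub,Gruber93}. For $\delta_V$ and $\delta_W$ there are constants $c_V(K),c_W(K)>0$, expressed through integrals of powers of the Gauss curvature over $\partial K$, such that
\[
  \inf\{\delta_V(K,Q):Q\in\mathscr P_n(K)\}=(1+o(1))\,c_V(K)\,n^{-\frac{2}{d-1}}
\]
as $n\to\infty$, and likewise for $\delta_W$. Write $m_N:=f_0(P_N)\le N$, and let $\delta$ denote either $\delta_V$ or $\delta_W$. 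Since $P_N\in\mathscr P_{m_N}(K)$, we have the basic inequality
\[
  \delta(K,P_N)\ \ge\ \inf\{\delta(K,Q):Q\in\mathscr P_{m_N}(K)\}.
\]

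The first step is to show $m_N\to\infty$. Suppose not. Then along a subsequence $m_N\le M$ for a fixed $M$. The quantity $\inf\{\delta(K,Q):Q\in\mathscr P_M(K)\}$ is a fixed positive number, because $K$ is strictly convex and so is not a polytope; positivity uses compactness of $(\partial K)^M$ together with continuity of $V_j$ in the Hausdorff metric. On the other hand, the optimal error with $N$ vertices tends to $0$ like $N^{-2/(d-1)}$. These two facts together contradict the assumption that $\{P_N\}$ is asymptotically best.

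With $m_N\to\infty$ established, the asymptotic formula applies at $m_N$. Combining it with the hypothesis of asymptotic optimality and the basic inequality gives
\[
  (1+o(1))\,c(K)\,N^{-\frac{2}{d-1}}=\delta(K,P_N)\ \ge\ (1+o(1))\,c(K)\,m_N^{-\frac{2}{d-1}}.
\]
Dividing through yields $(N/m_N)^{2/(d-1)}\le 1+o(1)$, so $\liminf m_N/N\ge1$. Since $m_N\le N$, it follows that $m_N/N\to1$.

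I expect the main point needing care to be the use of the sharp asymptotic formula along the arbitrary subsequence $m_N$. This works because the formula holds as $n\to\infty$ through all integers, so it applies to any sequence tending to infinity. It also requires that $c(K)>0$ is the same constant in the upper and lower asymptotics. For $\delta_V$ this is Gruber's theorem, and for $\delta_W$ it is Glasauer--Gruber, both for $C^2_+$ bodies.
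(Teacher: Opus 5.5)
Your proof is correct and follows essentially the paper's route. Both arguments rest only on the sharp asymptotics $\inf\{\delta(K,Q):Q\in\mathscr P_n(K)\}\sim c(K)\,n^{-\frac{2}{d-1}}$ together with the fact that $P_N$ already lies in the smaller class $\mathscr P_{f_0(P_N)}(K)$. The paper argues by contradiction instead, comparing with $M_N=\lfloor(1-\varepsilon)N\rfloor$ and using the nesting $\mathscr P_{f_0(P_N)}(K)\subset\mathscr P_{M_N}(K)$; since the comparison index $M_N$ tends to infinity automatically, your preliminary step showing $f_0(P_N)\to\infty$ (via compactness and positivity) is not needed there.
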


We postpone the proof of Lemma \ref{lem:f0-asymp-N} until the end of this subsection.

\begin{remark}\label{rem:muN-vs-N}
Recall that $P_N\in\mathscr P_N(K)$ means $f_0(P_N)\le N$. For asymptotically best
sequences, Lemma~\ref{lem:f0-asymp-N} shows that $f_0(P_N)/N\to1$, so the scaling
$N^{\frac{2}{d-1}}$ is equivalent to $f_0(P_N)^{\frac{2}{d-1}}$ up to a factor
$1+o(1)$.
\end{remark}

For each $P_N\in\mathscr{P}_N(K)$, we associate the \emph{empirical vertex measure}
\[
   \mu_N := \frac{1}{f_0(P_N)}\sum_{v\in\mathrm{vert}(P_N)}\delta_v,
\]
which is a probability measure on $\partial K$. Moreover, if $\mu$ has density $f$ with respect to surface area, then we set
\[
\mathcal{J}_V(\mu):=\int_{\partial K}\kappa(x)^{\frac{1}{d-1}}\left(\frac{d\mu}{dS}(x)\right)^{-\frac{2}{d-1}}\,dS(x)=\int_{\partial K}\kappa(x)^{\frac{1}{d-1}}f(x)^{-\frac{2}{d-1}}\,dS(x);
\]
otherwise, we set $\mathcal{J}_V(\mu)=+\infty$.

Our main goal in this section is to describe the weak limits of $\mu_N$ for asymptotically
best sequences. The key ingredient is the quantization-type asymptotic theory for the 
best inscribed approximation of smooth convex bodies studied in the groundbreaking works of Gruber
\cite{Gruber93,Gruber2004} and Glasauer and Gruber \cite{glasgrub}. In order to identify the optimal densities, we will use the following technical result in our proofs.

\begin{lemma}\label{lem:holder-optimal-density}
Let $(X,\mu)$ be a finite measure space and let $w:X\to(0,\infty)$ be a
measurable function with $\int_X w\,d\mu<\infty$. Fix a parameter $a>0$, and
consider the functional
\[
   \mathcal{I}_a(f)
   :=
   \int_X w(x)f(x)^{-a}\,d\mu(x),
\]
where $f:X\to(0,\infty)$ ranges over all probability densities with
respect to $\mu$, i.e., $f>0$ a.e. and $\int_X f\,d\mu=1$. Then for every such $f$,
\begin{equation}\label{eq:holder-lower-bound}
   \mathcal{I}_a(f)
   \;\geq\;
   \left(\int_X w(x)^{\frac{1}{a+1}}\,d\mu(x)\right)^{a+1}.
\end{equation}
Equality holds in \eqref{eq:holder-lower-bound} if and only if
$f$ is proportional to $w^{1/(a+1)}$, i.e.,
\[
   f(x)
   =
   \frac{w(x)^{\frac{1}{a+1}}}{\displaystyle\int_X w(y)^{\frac{1}{a+1}}\,d\mu(y)}
   \qquad\text{for $\mu$-a.e. }x\in X.
\]
In particular, this $f$ is the unique minimizer of $\mathcal{I}_a$ over all
probability densities on $X$.
\end{lemma}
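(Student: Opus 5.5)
Since every $f$ in question satisfies $\int_X f\,d\mu=1$, we can get the bound from Hölder's inequality with three factors, or equivalently from the reverse Hölder inequality. Write
\[
w^{\frac{1}{a+1}} = \bigl(w f^{-a}\bigr)^{\frac{1}{a+1}}\, f^{\frac{a}{a+1}},
\]
and apply Hölder with conjugate exponents $p=a+1$ and $q=\frac{a+1}{a}$. This gives
\[
\int_X w^{\frac{1}{a+1}}\,d\mu \le \Bigl(\int_X w f^{-a}\,d\mu\Bigr)^{\frac{1}{a+1}}\Bigl(\int_X f\,d\mu\Bigr)^{\frac{a}{a+1}} = \mathcal{I}_a(f)^{\frac{1}{a+1}}.
\]
Raising both sides to the power $a+1$ yields \eqref{eq:holder-lower-bound}. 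The integrability facts needed are easy. Since $\mu(X)<\infty$ and $\int_X w\,d\mu<\infty$, Hölder or Jensen gives $\int_X w^{1/(a+1)}\,d\mu\le \mu(X)^{a/(a+1)}(\int_X w\,d\mu)^{1/(a+1)}<\infty$. If $\mathcal{I}_a(f)=+\infty$, the inequality holds trivially, so from here on we may assume $\mathcal{I}_a(f)<\infty$.

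For the equality case, use the standard characterization of equality in Hölder's inequality. Assuming both factors have finite, nonzero norms, equality holds iff $|F|^p$ and $|G|^q$ are proportional $\mu$-a.e., where $F=(wf^{-a})^{1/(a+1)}$ and $G=f^{a/(a+1)}$. This means $wf^{-a}=c f$ a.e. for some constant $c>0$, that is, $f^{a+1}=w/c$, so $f\propto w^{1/(a+1)}$. The normalization $\int_X f\,d\mu=1$ then fixes the constant and gives the displayed formula. The norms are indeed nonzero: $w>0$ and $f>0$ a.e., and $\mu(X)>0$ may be assumed, since otherwise there are no probability densities at all.

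For the converse, plug $f=w^{1/(a+1)}/Z$ with $Z=\int_X w^{1/(a+1)}d\mu$ directly into $\mathcal I_a$. This gives $wf^{-a}=Z^a w^{1/(a+1)}$, so $\mathcal{I}_a(f)=Z^{a}\cdot Z=Z^{a+1}$. Hence equality holds, and this $f$ is an admissible positive density.

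Uniqueness of the minimizer follows immediately. Any minimizer attains the lower bound $Z^{a+1}$, so it is in the equality case, and is therefore equal a.e. to the displayed $f$. The only mildly delicate point is the equality case of Hölder, which is where one needs nonvanishing and finite norms. An alternative route is strict convexity of $t\mapsto t^{-a}$ via Jensen's inequality with respect to the probability measure $\nu=w^{1/(a+1)}d\mu/Z$, applied to $g=f/(w^{1/(a+1)}/Z)$. Note that $\int_X g\,d\nu=1$, and $\mathcal{I}_a(f)=Z^{a+1}\int_X g^{-a}\,d\nu\ge Z^{a+1}\bigl(\int_X g\,d\nu\bigr)^{-a}=Z^{a+1}$. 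Strict convexity forces $g\equiv1$ $\nu$-a.e. in the equality case, and since $w>0$, $\nu$-a.e. is the same as $\mu$-a.e. I would present this Jensen version, as it handles the equality case most cleanly.
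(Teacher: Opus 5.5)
Your Hölder argument is the paper's proof: it uses the same factorization $w^{1/(a+1)}=(wf^{-a})^{1/(a+1)}f^{a/(a+1)}$ with exponents $a+1$ and $\frac{a+1}{a}$, and the same equality characterization through proportionality of $wf^{-a}$ and $f$. Your added checks (finiteness of $\int_X w^{1/(a+1)}\,d\mu$, the case $\mathcal{I}_a(f)=\infty$, and nonvanishing norms) fill in details the paper leaves implicit. The Jensen version you prefer is also correct, and it is the same inequality rewritten after the change of measure $d\nu=w^{1/(a+1)}\,d\mu/Z$, so either presentation works.
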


\begin{proof}
Let $a>0$ be fixed. Set $p=\frac{a+1}{a}$ and $q=a+1$ and note that $\frac{1}{p}+\frac{1}{q}=1$. Fix a probability density $f$ on $X$ such that $f>0$ a.e. and $\int_X f\,d\mu=1$. Define
\[
   f_1(x) := f(x)^{\frac{a}{a+1}}
   \quad \text{and} \quad 
   f_2(x) := f(x)^{-\frac{a}{a+1}}w(x)^{\frac{1}{a+1}}.
\]
Then for every $x\in X$, we have $f_1(x)f_2(x)=w(x)^{\frac{1}{a+1}}$, which implies 
\[
   \int_X w(x)^{\frac{1}{a+1}}\,d\mu(x)
   = \int_X f_1(x)f_2(x)\,d\mu(x).
\]

We now apply H\"older's inequality with exponents $p$ and $q$ to get
\begin{equation}\label{eq:fg=w-beta}
   \int_X w(x)^{\frac{1}{a+1}}\,d\mu(x)
   = \int_X f_1(x)f_2(x)\,d\mu(x)=\|f_1 f_2\|_{L^1(\mu)}
   \leq \|f_1\|_{L^p(\mu)}\|f_2\|_{L^q(\mu)}.
\end{equation}
We compute these norms. By the definition of $f_1$ and since $p=\frac{a+1}{a}$,
\[
   \|f_1\|_{L^p(\mu)}^p
   = \int_X |f_1(x)|^p\,d\mu(x)
   = \int_X f(x)^{\frac{a p}{a+1}}\,d\mu(x)=\int_X f(x)\,d\mu(x)=1.
\]
Next, we compute the $L^q$ norm of $f_2$ with $q=a+1$:
\begin{align*}
   \|f_2\|_{L^q(\mu)}^q
   &= \int_X |f_2(x)|^q\,d\mu(x)
   = \int_X
      \left(f(x)^{-\frac{a}{a+1}} w(x)^{\frac{1}{a+1}}\right)^q
      \,d\mu(x)\\
&=\int_X f(x)^{-\frac{aq}{a+1}} w(x)^{\frac{q}{a+1}}\,d\mu(x)
   = \int_X f(x)^{-a} w(x)\,d\mu(x)
   = \mathcal{I}_a(f).
\end{align*}
Thus,
\[
   \|f_2\|_{L^q(\mu)} = \mathcal{I}_a(f)^{\frac{1}{a+1}}.
\]

Returning to \eqref{eq:fg=w-beta}, H\"older's inequality gives
\[
   \int_X w(x)^{\frac{1}{a+1}}\,d\mu(x)
   \leq 1 \cdot \mathcal{I}_a(f)^{\frac{1}{a+1}},
\]
and hence
\[
   \mathcal{I}_a(f)
   \geq \left(\int_X w(x)^{\frac{1}{a+1}}\,d\mu(x)\right)^{a+1}.
\]
This proves \eqref{eq:holder-lower-bound}.

For the equality case, recall that equality in H\"older holds if and only if $|f_1|^p$ and $|f_2|^q$ are proportional $\mu$-a.e., i.e., there
exists a constant $\lambda>0$ such that
\[
   |f_1(x)|^p = \lambda|f_2(x)|^q
   \qquad\text{for $\mu$-a.e. }x\in X.
\]
For our choice of functions, we have:
\[
   f_1(x) = f(x)^{\frac{a}{a+1}},\qquad p=\frac{a+1}{a}
   \quad\Longrightarrow\quad
   f_1(x)^p = f(x),
\]
and
\[
   f_2(x) = f(x)^{-\frac{a}{a+1}} w(x)^{\frac{1}{a+1}},\qquad q=a+1
   \quad\Longrightarrow\quad
   f_2(x)^q = f(x)^{-a}w(x).
\]
Thus, equality in H\"older is equivalent to
\[
   f(x) = \lambda f(x)^{-a} w(x)
   \qquad\text{for $\mu$-a.e. }x.
\]
Rearranging terms yields
\[
   f(x)^{a+1} = \lambda w(x) \qquad\text{for $\mu$-a.e. }x,
\]
and hence
\[
   f(x) = \lambda^{\frac{1}{a+1}} w(x)^{\frac{1}{a+1}} \qquad\text{for $\mu$-a.e. }x.
\]
Imposing the normalization $\int_X f\,d\mu = 1$, we obtain
\[
   \lambda^{\frac{1}{a+1}}
   = \left(\int_X w(y)^{\frac{1}{a+1}}\,d\mu(y)\right)^{-1},
\]
so
\[
   f(x)
   =
   \frac{w(x)^{\frac{1}{a+1}}}{
      \displaystyle\int_X w(y)^{\frac{1}{a+1}}\,d\mu(y)}.
\]
This is the unique density achieving equality in
\eqref{eq:holder-lower-bound}, and is therefore the unique minimizer of
$\mathcal{I}_a$.
\end{proof}

Applying Lemma~\ref{lem:holder-optimal-density} with $X=\partial K$, $\mu=dS$, $a=2/(d-1)$ and $w(x)=w_V(x):=\kappa(x)^{\frac{1}{d-1}}$, we obtain
\begin{equation}\label{eq:muVopt-density}
f_V^{\rm opt}(x):=\frac{\kappa(x)^{\frac{1}{d+1}}}{\int_{\partial K}\kappa(y)^{\frac{1}{d+1}}\,dS(y)}.
\end{equation}
Therefore,
\[
\inf_{\mu\in\mathcal{P}(\partial K)}\mathcal{J}_V(\mu)
= \left(\int_{\partial K}\kappa(x)^{\frac{1}{d+1}}\,dS(x)\right)^{\frac{d+1}{d-1}}.
\]

\subsection{Asymptotic formulas for volume and mean width approximation}

We first recall the sharp asymptotics for the best volume
and mean width approximations.

\begin{theorem}[Best inscribed volume approximation]
\label{thm:ext-vol}
Let $K\subset\mathbb{R}^d$ be a $C_+^2$ convex body. 
\begin{enumerate}
\item[(i)] \emph{(Asymptotic error)} There exists a constant $\dell_{d-1}>0$ (depending only on the dimension $d$) such that
\begin{equation}\label{eq:best-vol-asymp}
   \inf\{\delta_V(K,P): P\in\mathscr{P}_N(K)\}
   \sim 
   \frac{\dell_{d-1}}{2}
   \left(\int_{\partial K}\kappa(x)^{\frac{1}{d+1}}\,dS(x)\right)^{\frac{d+1}{d-1}}\NN
\end{equation}
as $N\to\infty$.

\item[(ii)] \emph{(Limiting vertex distribution for asymptotically optimal sequences)} 
There exists a strictly convex functional $\mathcal{J}_V$ on the space of probability measures on $\partial K$ such that for every sequence $P_N\in\mathscr P_N(K)$ with empirical measures $\mu_N\rightharpoonup\mu$, we have
\begin{equation}\label{eq:JV-liminf}
\liminf_{N\to\infty} \NNN\delta_V(K,P_N)
\ge \frac{\dell_{d-1}}{2}\cdot\mathcal J_V(\mu).
\end{equation}
Moreover, if $\{P_N\}$ is asymptotically best for volume in the sense of
\eqref{eq:asymp-best-vol}, then
\[
\lim_{N\to\infty}\NNN\delta_V(K,P_N)
=
\frac{\dell_{d-1}}{2}\cdot\inf_{\nu\in\mathcal P(\partial K)}\mathcal J_V(\nu),
\]
and every weak limit $\mu$ of $\{\mu_N\}$ minimizes $\mathcal J_V$.
The minimizer is unique and equals the absolutely continuous measure
$\mu_V^{\mathrm{opt}}$ with density
\eqref{eq:muVopt-density}.
\end{enumerate}
\end{theorem}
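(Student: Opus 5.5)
The statement is a compilation of results of Gruber \cite{Gruber93,Gruber2004} and Glasauer--Gruber \cite{glasgrub}. I would reprove it by a localization argument of quantization type, and then identify the minimizer using Lemma~\ref{lem:holder-optimal-density}.

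\emph{Step 1 (local model).} Since $K$ is $C^2_+$, near each $x\in\partial K$ the boundary is, up to $o(|y|^2)$, the graph over $T_x\partial K$ of the quadratic form $\frac12 Q_x(y)$ with $\det Q_x=\kappa(x)$. For an inscribed polytope, the volume missing over a small patch is then approximately $\int (\frac12 Q_x - \ell)$, where $\ell$ is the piecewise-linear interpolation of $\frac12 Q_x$ at the projected vertices. A volume-preserving linear change of variables turns $Q_x$ into $\kappa(x)^{1/(d-1)}|y|^2$, which reduces the problem to a Euclidean problem for the paraboloid $\frac12|y|^2$. For that problem, Gruber's Delaunay/quantization theorem gives the following: $n$ points in a region of area $A$ produce an error of at least, and optimally asymptotically equal to, $\frac{\dell_{d-1}}{2}A^{\frac{d+1}{d-1}}n^{-\frac{2}{d-1}}$. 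This is where the constant $\dell_{d-1}$ comes from. A patch $C_i$ with area $A_i$, curvature about $\kappa_i$ and $n_i$ vertices therefore contributes $\frac{\dell_{d-1}}{2}\kappa_i^{\frac1{d-1}}A_i^{\frac{d+1}{d-1}}n_i^{-\frac2{d-1}}(1+o(1))$.

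\emph{Step 2 (lower bound \eqref{eq:JV-liminf}).} Fix a fine partition $\{C_i\}$ of $\partial K$ into patches whose boundaries are $\mu$-null. Superadditivity of the local error lets me sum the patch lower bounds; simplices straddling patch boundaries must be shown negligible. Writing $n_i\le f_0(P_N)\mu_N(C_i)\le N\mu_N(C_i)$ and letting $N\to\infty$, I obtain
\[
\liminf_{N\to\infty} \NNN\delta_V(K,P_N)\ge\frac{\dell_{d-1}}{2}\sum_i \kappa_i^{\frac1{d-1}}\Bigl(\frac{\mu(C_i)}{A_i}\Bigr)^{-\frac2{d-1}}A_i .
\]
Refining the partition and using convexity of $t\mapsto t^{-2/(d-1)}$ (Jensen on each cell), the right-hand side increases to $\frac{\dell_{d-1}}{2}\mathcal J_V(\mu)$. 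Here the density in $\mathcal J_V$ is that of the absolutely continuous part of $\mu$. A singular part only lowers that density and so raises $\mathcal J_V$; if $\mu$ has no density, the bound gives $+\infty$, consistent with the convention $\mathcal J_V=+\infty$. I expect this step to be the main obstacle: controlling boundary effects between patches and the error from freezing curvature uniformly in $N$. I would first try to do this by Gruber's method, comparing with the Delaunay triangulation of the projected vertex set and discarding a layer of width $o(1)$ around the patch boundaries.

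\emph{Step 3 (upper bound and conclusions).} For the upper bound, on each patch I place the asymptotically optimal Euclidean configuration from Step 1 with $n_i=\lfloor N\int_{C_i}f\,dS\rfloor$ vertices, for a continuous density $f$. Step 1 then gives $\limsup_{N\to\infty} \NNN\inf_{P\in\mathscr P_N(K)}\delta_V(K,P)\le\frac{\dell_{d-1}}{2}\mathcal J_V(f\,dS)$. Choosing $f=f_V^{\rm opt}$ and invoking Lemma~\ref{lem:holder-optimal-density} with $a=\frac{2}{d-1}$ and $w=\kappa^{1/(d-1)}$ matches the lower bound of Step 2, since $\inf\mathcal J_V=(\int\kappa^{1/(d+1)}dS)^{\frac{d+1}{d-1}}$. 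This proves (i). For (ii), let $\{P_N\}$ be asymptotically best for volume. By (i), $\NNN\delta_V(K,P_N)$ converges to $\frac{\dell_{d-1}}{2}\inf\mathcal J_V$. Since $\mathcal P(\partial K)$ is weakly compact (Prokhorov), every subsequence of $\{\mu_N\}$ has a weak limit $\mu$, and \eqref{eq:JV-liminf} forces $\mathcal J_V(\mu)\le\inf\mathcal J_V$, so $\mu$ is a minimizer. Strict convexity of $\mathcal J_V$, or equivalently the equality case in Lemma~\ref{lem:holder-optimal-density}, shows that the minimizer is unique and equal to $\mu_V^{\rm opt}$.
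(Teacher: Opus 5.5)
Your outline is sound, but it takes a genuinely different route from the paper. The paper does not prove Theorem~\ref{thm:ext-vol} at all. Part~(i) is quoted from Gruber~\cite{Gruber93}, and part~(ii) is attributed to \cite[Theorem~6]{Gruber2004} together with the curvature weight of \cite{glasgrub}; Lemma~\ref{lem:reduction-quantization} only restates these citations. The only arguments the paper actually carries out are the two you share:
\begin{enumerate}
\item[(a)] identifying the minimizer via Lemma~\ref{lem:holder-optimal-density};
\item[(b)] in Lemma~\ref{lem:vol-limit-asymp-det}, the compactness argument of your Step~3.
\end{enumerate}
You instead rebuild Gruber's localization, still taking the Euclidean paraboloid problem that defines $\dell_{d-1}$ as a black box: a unimodular affine change on each patch (exactly where the weight $\kappa^{1/(d-1)}$ comes from), patchwise Delaunay lower bounds summed by superadditivity, partition refinement, and a patchwise construction for the upper bound. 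This makes the constant and the weight transparent, and it gives a self-contained route to \eqref{eq:JV-liminf} for \emph{arbitrary} sequences, which the paper gets only by appeal to the cited theorem. The price is that the hard estimates are only sketched: excluding large facets, boundary layers in both the lower and upper bounds, and freezing the curvature uniformly in $N$. These are precisely the content of \cite{Gruber93,Gruber2004}.

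One point in Step~2 needs more than Jensen. Jensen shows the partition sums increase under refinement and stay below $\mathcal J_V(\mu)$. To get that their supremum \emph{equals} $\mathcal J_V(\mu)$ you need a differentiation argument, e.g.\ martingale convergence of $\mu(C)/\vol_{d-1}(C)$ along nested partitions to the density of the absolutely continuous part, followed by Fatou's lemma.

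Using the density of the absolutely continuous part of $\mu$ is in fact necessary. Contrary to your remark, it agrees with the paper's convention only for purely singular $\mu$. With $\mathcal J_V(\mu)=+\infty$ for every non-absolutely-continuous $\mu$, as fixed just before the theorem, \eqref{eq:JV-liminf} is false. To see this, adjoin about $N/2$ points of $\partial K$ within distance $1/N$ of a fixed $x_0$ to the vertices of an optimal polytope in $\mathscr P_{\lfloor N/2\rfloor}(K)$. Along a subsequence $\mu_N\rightharpoonup\frac12\mu'+\frac12\delta_{x_0}$, yet $\NNN\delta_V(K,P_N)$ stays bounded by (i), since adding vertices only decreases $\delta_V$.

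Your functional gives the correct finite value here. However, it is then not strictly convex on all of $\mathcal P(\partial K)$: it is constant along segments joining measures with the same absolutely continuous part. So uniqueness of the minimizer should rest on H\"older rather than on strict convexity. If the absolutely continuous part has density $f_{\mathrm{ac}}$ and mass $m<1$, Lemma~\ref{lem:holder-optimal-density} applied to $f_{\mathrm{ac}}/m$ gives $\mathcal J_V(\mu)\ge m^{-\frac{2}{d-1}}\inf\mathcal J_V>\inf\mathcal J_V$. For $m=1$, the lemma's equality case applies.
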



\begin{remark}
The asymptotic formula \eqref{eq:best-vol-asymp} is due to Gruber
\cite{Gruber93}. The interpretation in terms of a functional
$\mathcal{J}_V$ on probability measures and the uniqueness of the minimizer
$\mu_V^{\mathrm{opt}}$ follow from Gruber's quantization theory on
manifolds \cite{Gruber2004} together with the explicit curvature weight
identified in \cite{glasgrub}; see in particular Remark~8 there.
We will only need the 
existence, lower semicontinuity,  strict convexity and the known curvature weight of $\mathcal{J}_V$. 
\end{remark}

\begin{remark}
    The quantity $\int_{\partial K}\kappa(x)^{\frac{1}{d+1}}\,dS(x)$ is called the \emph{affine surface area} of $K$, which has numerous applications to affine differential geometry and asymptotic geometric analysis, including affine isoperimetric inequalities and valuation theory. For more background, we refer the reader to, for example, \cite{SW-affine-SA} and the references therein.
\end{remark}

More precisely, the reduction to a quantization problem on the manifold $\partial K$
(with squared Euclidean distortion) and the curvature-weight identification from
\cite{glasgrub} allow one to apply Gruber's theorem \cite[Theorem~6]{Gruber2004}. 
\cite{glasgrub,Gruber2004} in a form convenient for our argument.

\begin{lemma}[Reduction to quantization on $\partial K$]\label{lem:reduction-quantization}
Let $K\subset\R^d$ be a $C_+^2$ convex body. Then the problem of approximating $K$
by inscribed polytopes $P_N\in\mathscr P_N(K)$ admits the following local
quantization description: There exists a finite atlas $\{(U_\alpha,\phi_\alpha)\}_\alpha$ of $\partial K$ by
$C^2$ coordinate charts $\phi_\alpha:U_\alpha\to V_\alpha\subset\R^{d-1}$ and
constants $c_\alpha,C_\alpha>0$ such that for each chart and all $x,y\in U_\alpha$
sufficiently close,
\begin{equation}\label{eq:distortion-local}
c_\alpha\|\phi_\alpha(x)-\phi_\alpha(y)\|^2
\;\le\;
\|x-y\|^2
\;\le\;
C_\alpha\|\phi_\alpha(x)-\phi_\alpha(y)\|^2.
\end{equation}
Moreover, after expressing the surface area measure $dS$ in these coordinates,
the leading term of the inscribed approximation error (for $\delta_V$ and for
$\delta_W$) is governed by an optimal quantization functional of the form
\[
\int_{\partial K} w(x)f(x)^{-\frac{2}{d-1}}\,dS(x),
\]
where $f=\frac{d\mu}{dS}$ is the limiting vertex density and where the weight
$w$ is the curvature weight identified in \cite{glasgrub} (namely
$w=\kappa^{\frac{1}{d-1}}$ for volume and $w=\kappa^{\frac{d}{d-1}}$ for mean width).
\end{lemma}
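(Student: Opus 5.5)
The proof splits into two parts: the existence of a bi-Lipschitz atlas with the quadratic distortion bound \eqref{eq:distortion-local}, and the identification of the leading error term as a weighted quantization functional. For the first part, I will use that $\partial K$ is a compact $C^2$ hypersurface. Near each $x_0\in\partial K$, I will write $\partial K$ locally as the graph of a $C^2$ function $g_\alpha$ over the tangent hyperplane $T_{x_0}\partial K\cong\R^{d-1}$, and take $\phi_\alpha$ to be the orthogonal projection onto that hyperplane. For $x=(u,g_\alpha(u))$ and $y=(v,g_\alpha(v))$ we have
\[
\|x-y\|^2=\|u-v\|^2+|g_\alpha(u)-g_\alpha(v)|^2 .
\]
Since $\nabla g_\alpha$ is bounded on a compact neighbourhood, this gives \eqref{eq:distortion-local} with $c_\alpha=1$ and $C_\alpha=1+\sup\|\nabla g_\alpha\|^2$. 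Compactness of $\partial K$ then yields a finite atlas. The density of $dS$ in these coordinates is $\sqrt{1+\|\nabla g_\alpha\|^2}$, which is continuous and bounded above and below, so $dS$ is comparable to Lebesgue measure in each chart.

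For the second part, I will use the local description of the error. Fix a vertex set $X_N\subset\partial K$ and let $P_N=\conv X_N$. The region $K\setminus P_N$ decomposes into caps cut off by the facets of $P_N$. Near a point $x$, I will approximate $\partial K$ by its osculating paraboloid $\frac12 Q_x(u)$, where $Q_x$ is the second fundamental form, with eigenvalues the principal curvatures and $\det Q_x=\kappa(x)$. The volume of the cap over a facet is then the integral over the projected facet of the gap between the lifted-paraboloid interpolant and the paraboloid. After the linear change of variables $u\mapsto Q_x^{1/2}u$, this gap becomes a Delaunay-type squared-distance quantization error for the standard paraboloid. This is exactly the moment-of-inertia/quantization reduction of Gruber \cite{Gruber93} and Glasauer--Gruber \cite{glasgrub}, which gives the constant $\dell_{d-1}$.

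The curvature weight comes from a scaling argument. In a patch of area $A$ containing $n=Nf(x)A$ vertices, the linear change of variables costs a Jacobian factor $\kappa^{-1/2}$ in area and a factor $\kappa^{1/2}$ in the quadratic form. Using the optimal $(d-1)$-dimensional quantization error $\sim \dell_{d-1}\,A'\,(A'/n)^{2/(d-1)}$, the contribution per unit $dS$ is proportional to $\kappa^{1/(d-1)}f^{-2/(d-1)}N^{-2/(d-1)}$, so $w_V=\kappa^{1/(d-1)}$.

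For the mean width I will argue through support functions. Here $w(K)-w(P_N)$ is an integral over $\Sp$ of $h_K-h_{P_N}$. Pulling this back to $\partial K$ via the Gauss map introduces the Jacobian $d\sigma=\kappa\,dS$. Locally $h_K-h_{P_N}$ is, to leading order, a height difference of the same paraboloid type, so the weight picks up an extra factor $\kappa$, giving $w=\kappa^{\frac{1}{d-1}+1}=\kappa^{\frac{d}{d-1}}$, as in \cite{glasgrub}.

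The main obstacle is passing from these local estimates to the global statement uniformly in the vertex configuration. The paraboloid approximation has to hold with $o(1)$ error on patches whose diameter shrinks like $N^{-1/(d-1)}$. This uses the uniform continuity of the second fundamental form on compact $\partial K$ together with \eqref{eq:distortion-local}, which guarantees that facets stay small. Rather than redo this, I will cite Gruber's manifold quantization theorem \cite[Theorem~6]{Gruber2004}. Its hypotheses are precisely a finite atlas with bi-Lipschitz distortion of the form \eqref{eq:distortion-local} and a continuous positive weight. These are verified above, with $\kappa$ continuous and positive because $K$ is $C^2_+$. The theorem delivers the functional $\int w f^{-2/(d-1)}\,dS$ with the weights identified in \cite[Remark~8]{glasgrub}.
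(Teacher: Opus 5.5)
Your proposal follows essentially the same route as the paper's proof. Charts taken as graphs over tangent hyperplanes give the bi-Lipschitz bound \eqref{eq:distortion-local}; the curvature weights are then taken from Glasauer--Gruber and fed into Gruber's manifold quantization theorem, and your scaling heuristics serve only as a consistency check of $\kappa^{\frac{1}{d-1}}$ and $\kappa^{\frac{d}{d-1}}$.

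One small correction to the mean width heuristic: locally $h_K-h_{P_N}$ is not the Delaunay interpolation gap of the volume case. It is the nearest-vertex (Voronoi-type) distortion $\tfrac12\min_i (q-x_i)^{T}Q_x(q-x_i)$ in tangent coordinates, which is why the constant there is $\divv_{d-1}$ rather than $\dell_{d-1}$. Because the same quadratic form $Q_x$ appears, your weight $\kappa\cdot\kappa^{\frac{1}{d-1}}$ is still correct.
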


\begin{proof}
This formulation is a direct restatement of the local reduction carried
out in \cite{glasgrub} together with the general quantization theorem on
compact $C^2$ manifolds proved in \cite[Theorem~6]{Gruber2004}. 
Since $\partial K$ is a compact $C^2$ hypersurface, it is a compact $C^2$
Riemannian manifold of dimension $d-1$. The coordinate charts may be taken as
graphs over tangent hyperplanes; the $C^2$ regularity implies the local
bi-Lipschitz comparability \eqref{eq:distortion-local} between Euclidean squared
distance in $\R^d$ and squared distance in local coordinates.

The reduction of the local volume and mean width errors for inscribed
approximation to a quantization problem with squared Euclidean distortion is
carried out in \cite{glasgrub} (see in particular the identification of the
curvature-dependent weights in their analysis of best inscribed approximation).
Once this identification is in place, Gruber's quantization theorem
\cite[Theorem~6]{Gruber2004} applies to the induced measure on the manifold
$\partial K$ and yields the stated liminf inequalities and the existence of the
strictly convex limit functionals with unique minimizers.
\end{proof}

For the mean width functional, we rely on the work of Glasauer and Gruber
\cite{glasgrub}. If $\frac{d\mu}{dS}=f$ is the density of $\mu$ with respect to $dS$, we define
\begin{equation}
    \mathcal{J}_W(\mu) := \int_{\partial K}\kappa(x)^{\frac{d}{d-1}}f(x)^{-\frac{2}{d-1}}\,dS(x),
\end{equation}
if $\mu$ is absolutely continuous with respect to $dS$ with density $f>0$ a.e.; otherwise, we set $\mathcal{J}_W(\mu)=+\infty$. Applying
Lemma~\ref{lem:holder-optimal-density} with $X=\partial K$, $\mu=dS$,
$a=\frac{2}{d-1}$ and $w(x)=\kappa(x)^{\frac{d}{d-1}}$, we obtain that the unique
minimizer $\mu_W^{\mathrm{opt}}$ is absolutely continuous with respect to $dS$
with density
\begin{equation}\label{eq:muWopt-density}
   f_W^{\rm opt}(x)
   :=
   \frac{\kappa(x)^{\frac{d}{d+1}}}{\displaystyle\int_{\partial K}\kappa(y)^{\frac{d}{d+1}}\,dS(y)}.
\end{equation}

\begin{theorem}[Best inscribed mean width approximation]
\label{thm:ext-mw}
Let $K\subset\mathbb{R}^d$ be a $C_+^2$ convex body. Then:
\begin{enumerate}
\item[(i)] \emph{(Asymptotic error)}  There exists a constant $\divv_{d-1}>0$ (depending only on the dimension $d$) such that
\begin{equation}\label{eq:best-mw-asymp}
   \inf\{\delta_W(K,P): P\in\mathscr{P}_N(K)\}
   \sim 
   \frac{\divv_{d-1}}{d\vol_d(B_d)}
   \left(\int_{\partial K}\kappa(x)^{\frac{d}{d+1}}\,dS(x)\right)^{\frac{d+1}{d-1}}\NN
\end{equation}
as $N\to\infty$.

\item[(ii)] \emph{(Limiting vertex distribution for asymptotically optimal sequences)}
There exists a strictly convex functional $\mathcal{J}_W$ on
$\mathcal P(\partial K)$ such that for every sequence
$P_N\in\mathscr P_N(K)$ with $\mu_N\rightharpoonup\mu$, we have
\begin{equation}\label{eq:JW-liminf}
\liminf_{N\to\infty}\NNN\delta_W(K,P_N)
\ge
\frac{\divv_{d-1}}{d\vol_d(B_d)}\cdot\mathcal J_W(\mu).
\end{equation}
If $\{P_N\}$ is asymptotically best for mean width, then
\[
\lim_{N\to\infty}\NNN\delta_W(K,P_N)
=
\frac{\divv_{d-1}}{d\vol_d(B_d)}\cdot
\inf_{\nu\in\mathcal P(\partial K)}\mathcal J_W(\nu),
\]
and every weak limit $\mu$ of $\mu_N$ equals the unique minimizer
$\mu_W^{\mathrm{opt}}$ with density \eqref{eq:muWopt-density}.
\end{enumerate}
\end{theorem}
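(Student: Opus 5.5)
The plan mirrors the volume case (Theorem~\ref{thm:ext-vol}): combine the classical mean width asymptotics of Glasauer and Gruber with Gruber's quantization theory on manifolds, using Lemma~\ref{lem:reduction-quantization} with weight $w_W=\kappa^{\frac{d}{d-1}}$.

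\emph{Step 1: Part (i).} This is the sharp formula of Glasauer and Gruber \cite{glasgrub} for best inscribed mean width approximation, and I would quote it directly. As a consistency check, Lemma~\ref{lem:holder-optimal-density} with $a=\frac{2}{d-1}$ and $w=\kappa^{\frac{d}{d-1}}$ gives $w^{\frac{1}{a+1}}=\kappa^{\frac{d}{d+1}}$. Hence
\[
\inf_{\nu}\mathcal J_W(\nu)=\Bigl(\int_{\partial K}\kappa^{\frac{d}{d+1}}\,dS\Bigr)^{\frac{d+1}{d-1}},
\]
which matches \eqref{eq:best-mw-asymp} with prefactor $\frac{\divv_{d-1}}{d\vol_d(B_d)}$.

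\emph{Step 2: the liminf inequality \eqref{eq:JW-liminf}.} By Lemma~\ref{lem:reduction-quantization}, the mean width error localizes: $\delta_W(K,P_N)$ is, up to $1+o(1)$, a sum over small boundary patches of local quantization errors. In each patch the facet of $P_N$ deviates from $\partial K$ quadratically, and the local error is weighted by $\kappa^{\frac{d}{d-1}}$. The extra factor $\kappa$, compared with the volume case, comes from the change of variables $u=\nu_K(x)$, $d\sigma=\kappa\,dS$, in the support-function integral.

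I would apply \cite[Theorem~6]{Gruber2004} chart by chart, using the bi-Lipschitz comparability \eqref{eq:distortion-local}, to get the lower bound $\liminf\ge\frac{\divv_{d-1}}{d\vol_d(B_d)}\int w_W f^{-\frac{2}{d-1}}\,dS$ whenever $\mu_N\rightharpoonup\mu$ with density $f$. When $\mu$ has a singular part or vanishes on a set of positive measure, some region receives $o(N)$ vertices. There the local error decays more slowly than $N^{-\frac{2}{d-1}}$, so the liminf is $+\infty=\mathcal J_W(\mu)$. Strict convexity of $\mathcal J_W$ follows from strict convexity of $t\mapsto t^{-\frac{2}{d-1}}$.

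\emph{Step 3: asymptotically best sequences.} Suppose $\{P_N\}$ is asymptotically best. Then by (i), $N^{\frac{2}{d-1}}\delta_W(K,P_N)$ converges to $\frac{\divv_{d-1}}{d\vol_d(B_d)}\inf_\nu\mathcal J_W(\nu)$. Since $\partial K$ is compact, Prokhorov's theorem guarantees that every subsequence of $\mu_N$ has a weakly convergent further subsequence, say with limit $\mu$. Along it, Step 2 gives $\mathcal J_W(\mu)\le\inf_\nu\mathcal J_W(\nu)$, so $\mu$ is a minimizer. The equality case of Lemma~\ref{lem:holder-optimal-density} then forces $\mu=\mu_W^{\rm opt}$, with density \eqref{eq:muWopt-density}. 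Because the limit is the same along every subsequence, the whole sequence converges.

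\emph{Main obstacle.} The hard step is Step 2: making rigorous that the mean width error of an \emph{arbitrary} inscribed sequence is bounded below by the localized quantization functional with the correct weight $\kappa^{\frac{d}{d-1}}$, including the degenerate (non-absolutely-continuous) limits. I would first handle it by partitioning $\partial K$ into small patches on which $\kappa$ is nearly constant. On each patch I would compare with the paraboloid model, where Glasauer--Gruber give the exact local constant $\divv_{d-1}$, and then sum using superadditivity of the local errors, as in Gruber's proof of \cite[Theorem~6]{Gruber2004}.
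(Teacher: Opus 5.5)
Your route is the paper's: quote Glasauer--Gruber for (i), get the lower bound from Gruber's quantization theorem via Lemma~\ref{lem:reduction-quantization} with weight $\kappa^{\frac{d}{d-1}}$, and identify limits of asymptotically best sequences by the subsequence argument and the equality case of Lemma~\ref{lem:holder-optimal-density}. However, one claim in your Step~2 is false. A singular part of $\mu$ does \emph{not} force any region to receive $o(N)$ vertices, and the liminf need not be $+\infty$.

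Here is a counterexample. Spread $\lfloor N/2\rfloor$ vertices on $\partial K$ so that every point of $\partial K$ lies within $CN^{-\frac{1}{d-1}}$ of one of them and their empirical measure tends to normalized surface measure. Put the remaining vertices within distance $1/N$ of a fixed $x_0\in\partial K$. Then $\mu_N\rightharpoonup\frac12\,dS/\vol_{d-1}(\partial K)+\frac12\delta_{x_0}$, which is not absolutely continuous. Yet $h_K(u)-h_{P_N}(u)=\min_v\langle x(u)-v,u\rangle\le C'\min_v\|x(u)-v\|^2$, where $x(u)\in\partial K$ has normal $u$. This gives $\NNN\delta_W(K,P_N)=O(1)$.

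So with the convention $\mathcal J_W(\mu)=+\infty$ for non-absolutely-continuous $\mu$, inequality \eqref{eq:JW-liminf} is false and cannot be proved. The defect is built into the statement's convention, and the paper's citation-based justification does not address it either. When the density vanishes on a set of positive measure, your conclusion $+\infty$ is correct. If that set has empty interior, though, it needs an outer-regularity localization rather than a ``region with $o(N)$ vertices''.

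What the localized paraboloid comparison actually yields is the bound with $\mathcal J_W(\mu):=\int_{\partial K}\kappa^{\frac{d}{d-1}}f_{\mathrm{ac}}^{-\frac{2}{d-1}}\,dS$. Here $f_{\mathrm{ac}}$ is the density of the absolutely continuous part of $\mu$ and $0^{-a}:=+\infty$; vertices accumulating on a $dS$-null set contribute nothing at leading order.

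Your Step~3 then needs a line it currently gets for free from the wrong convention. If $m:=\mu_{\mathrm{ac}}(\partial K)\in(0,1)$, then $\mathcal J_W(\mu)=m^{-\frac{2}{d-1}}\mathcal J_W(\mu_{\mathrm{ac}}/m)\ge m^{-\frac{2}{d-1}}\inf_\nu\mathcal J_W(\nu)>\inf_\nu\mathcal J_W(\nu)$, since the infimum is positive; $m=0$ gives $+\infty$. Hence every weak limit of an asymptotically best sequence is absolutely continuous, and only then does the H\"older equality case give $\mu=\mu_W^{\mathrm{opt}}$. The final conclusion of (ii) therefore survives. The corrected functional is not strictly convex on all of $\mathcal P(\partial K)$, but only uniqueness of the minimizer is used.

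Finally, the bi-Lipschitz bounds \eqref{eq:distortion-local} only give the order of magnitude. The sharp constant comes from the exact identity $\delta_W(K,P)=\frac{2}{\vol_{d-1}(\partial B_d)}\int_{\partial K}\kappa(x)\min_{v}\langle x-v,\nu_K(x)\rangle\,dS(x)$. Combine it with the expansion of $\langle x-v,\nu_K(x)\rangle$ as $\frac12$ times the second fundamental form at $x$ applied to $x-v$, up to $o(\|x-v\|^2)$. That is what your paraboloid comparison should rest on, rather than facets deviating from $\partial K$, which is the volume picture.
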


\begin{remark}
The asymptotic formula \eqref{eq:best-mw-asymp} and the limiting vertex
distribution for best mean width approximants were established by
Glasauer and Gruber \cite[Theorems 1 and 3]{glasgrub}. The functional
$\mathcal{J}_W$ can again be interpreted via Gruber's quantization framework;
we only need that its unique minimizer is $\mu_W^{\mathrm{opt}}$ with density
\eqref{eq:muWopt-density}. 
\end{remark}

\begin{remark}\label{rem:del-div}
Comparing the unique minimizers of $\mathcal{J}_V$ and $\mathcal{J}_W$ over all probability densities, namely,
\[
f_V^{\rm opt}\propto \kappa(x)^{\frac{1}{d+1}}
\quad\text{and}\quad
f_W^{\rm opt}\propto \kappa(x)^{\frac{d}{d+1}},
\]
we see that the volume and mean width approximation problems bias the limiting vertex distributions through different curvature exponents. Polytopes which are asymptotically best for the volume difference allocate vertices according to $\kappa(x)^{1/(d+1)}$, exhibiting a moderate curvature bias.  Meanwhile, polytopes which are asymptotically best for the mean width difference allocate vertices according to the larger value $\kappa(x)^{d/(d+1)}$, thereby exhibiting a high curvature bias. Thus, normalizing both densities to integrate to 1, it follows that asymptotically best mean width polytopes place proportionally more vertices in boundary regions of high curvature than the asymptotically best volume polytopes do. 
\end{remark}

\begin{remark}\label{div-del-remark}
The constants $\dell_{d-1}$  and $\divv_{d-1}$ appearing in Theorems \ref{thm:ext-vol} and \ref{thm:ext-mw} are called the   \emph{Delone triangulation number} and \emph{Dirichlet--Voronoi tiling number} in $\R^{d-1}$, respectively. These constants were introduced by Gruber in \cite{Gruber93}, where they were used in the proofs of the asymptotic formulas stated in  Theorems \ref{thm:ext-vol}(i) and \ref{thm:outer-asymp}(i). It is known that
\[
\dell_{d-1}=\frac{d}{2\pi e}\left(1+O\left(\frac{\ln d}{d}\right)\right)\quad \text{and}\quad \divv_{d-1}=\frac{d}{2\pi e}\left(1+\frac{\ln d}{d}+O\left(\frac{1}{d}\right)\right).
\]
The estimate for $\dell_{d-1}$ is due to Mankiewicz and Sch\"utt \cite{MaS1,MaS2}, while the estimate for $\divv_{d-1}$ is due to Hoehner and Kur \cite{HK-DCG}; a different proof was recently given in  \cite{HSW-2025}.
\end{remark}

\begin{proof}[Proof of Lemma \ref{lem:f0-asymp-N}]
Let $m_N:=\inf\{\delta_V(K,Q):Q\in\mathscr P_N(K)\}$.
Fix $\varepsilon\in(0,1)$ and set $M_N:=\lfloor(1-\varepsilon)N\rfloor$.
Since $\mathscr P_{M_N}(K)\subset\mathscr P_N(K)$, we have $m_N\le m_{M_N}$.
By  \eqref{eq:best-vol-asymp}, we have $m_N\sim c(d)N^{-\frac{2}{d-1}}$
for some $c(d)>0$. Hence,
\[
   \frac{m_{M_N}}{m_N}
   \;\longrightarrow\;
   \left(\frac{N}{M_N}\right)^{\frac{2}{d-1}}
   \;=\;
   (1-\varepsilon)^{-\frac{2}{d-1}}
   \;>\;1.
\]
If $\liminf_{N\to\infty} f_0(P_N)/N \le 1-\varepsilon$ for some $\varepsilon>0$, then
along an infinite subsequence we have $f_0(P_N)\le M_N$, hence
$P_N\in\mathscr P_{M_N}(K)$ and therefore $\delta_V(K,P_N)\ge m_{M_N}$.
Dividing by $m_N$ yields
\[
   \frac{\delta_V(K,P_N)}{m_N}\;\ge\;\frac{m_{M_N}}{m_N}
   \;\longrightarrow\;(1-\varepsilon)^{-\frac{2}{d-1}}>1,
\]
contradicting the asymptotic optimality \eqref{eq:asymp-best-vol}.
The proof for $\delta_W$ is identical (see \eqref{eq:best-mw-asymp}).
\end{proof}

\subsection{Limiting vertex distributions for asymptotically best sequences}

We now derive the consequences of Theorems~\ref{thm:ext-vol} and
\ref{thm:ext-mw} for asymptotically best sequences in the sense of
Definition~\ref{def:asymp-best-det}.

\begin{lemma}[Asymptotically best volume approximants]
\label{lem:vol-limit-asymp-det}
Let $K\subset\mathbb{R}^d$ be a $C_+^2$ convex body in $\R^d$, and let $\{P_N\}$ be a sequence in
$\mathscr{P}_N(K)$ which is asymptotically best for volume, i.e., 
\eqref{eq:asymp-best-vol} holds. Then the empirical measures $\mu_N$ converge
weakly to $\mu_V^{\mathrm{opt}}$, denoted
\[
   \mu_N \rightharpoonup \mu_V^{\mathrm{opt}},
\]
where $\mu_V^{\mathrm{opt}}$ has density
\eqref{eq:muVopt-density}.
\end{lemma}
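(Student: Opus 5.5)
The plan is a standard compactness-plus-uniqueness argument built on Theorem~\ref{thm:ext-vol}(ii).

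Since $\partial K$ is compact, $\mathcal P(\partial K)$ is weakly sequentially compact by Prokhorov's theorem (or Riesz--Banach--Alaoglu on $C(\partial K)^*$). It therefore suffices to show that every weakly convergent subsequence $\mu_{N_k}\rightharpoonup\mu$ has limit $\mu=\mu_V^{\mathrm{opt}}$. Indeed, if the full sequence did not converge to $\mu_V^{\mathrm{opt}}$, there would be a continuous test function $g$ and $\varepsilon>0$ with $|\int g\,d\mu_{N_k}-\int g\,d\mu_V^{\mathrm{opt}}|\ge\varepsilon$ along a subsequence. A further subsequence would converge weakly to some $\mu\neq\mu_V^{\mathrm{opt}}$, which is a contradiction.

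Fix such a subsequence with limit $\mu$. The liminf inequality \eqref{eq:JV-liminf}, applied along $N_k$, gives
\[
\liminf_{k\to\infty} N_k^{\frac{2}{d-1}}\delta_V(K,P_{N_k})\ge \frac{\dell_{d-1}}{2}\mathcal J_V(\mu).
\]
One point needs care here. Theorem~\ref{thm:ext-vol}(ii) is stated for full sequences with $\mu_N\rightharpoonup\mu$. I would handle this by completing the subsequence: for $N\notin\{N_k\}$, replace $P_N$ by a polytope from an asymptotically best sequence whose empirical measures converge to $\mu$. Alternatively, I would note that Gruber's liminf bound \cite[Theorem~6]{Gruber2004} is proved along arbitrary subsequences, since the argument is local in $N$.

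On the other side, asymptotic optimality \eqref{eq:asymp-best-vol} combined with \eqref{eq:best-vol-asymp} yields
\[
\lim_{N\to\infty}\NNN\delta_V(K,P_N)=\frac{\dell_{d-1}}{2}\left(\int_{\partial K}\kappa^{\frac{1}{d+1}}\,dS\right)^{\frac{d+1}{d-1}}.
\]
By Lemma~\ref{lem:holder-optimal-density}, the right-hand side equals $\frac{\dell_{d-1}}{2}\inf_\nu\mathcal J_V(\nu)$. Comparing the two displays gives $\mathcal J_V(\mu)\le\inf_\nu\mathcal J_V(\nu)$, so $\mathcal J_V(\mu)$ is finite and $\mu$ is a minimizer.

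Finiteness forces $\mu$ to be absolutely continuous with density $f>0$ a.e., by the definition of $\mathcal J_V$. The equality case of Lemma~\ref{lem:holder-optimal-density}, with $w=\kappa^{1/(d-1)}$ and $a=2/(d-1)$, then forces $f=f_V^{\rm opt}$ as in \eqref{eq:muVopt-density}, so $\mu=\mu_V^{\mathrm{opt}}$. Uniqueness of the minimizer is what makes the subsequence argument close.

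The main obstacle is the subsequence issue in the liminf step. I would first try to justify it directly: the liminf inequality in Gruber's quantization theory holds for any sequence of point configurations with $n_k\to\infty$ points whose empirical measures converge weakly. Lemma~\ref{lem:f0-asymp-N} is used here to replace $N_k$ by $f_0(P_{N_k})$ in the scaling without changing the limit (Remark~\ref{rem:muN-vs-N}).
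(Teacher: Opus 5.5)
Your proposal is correct and follows essentially the same compactness-plus-uniqueness argument as the paper. The paper likewise applies Gruber's liminf inequality directly along the subsequence, compares it with the limit $\frac{\dell_{d-1}}{2}\inf_\nu\mathcal J_V(\nu)$ forced by asymptotic optimality, and concludes via uniqueness of the minimizer $\mu_V^{\mathrm{opt}}$; your use of the H\"older equality case just makes that uniqueness explicit. One small caution on your ``completion'' variant: the filler polytopes should not be taken from an \emph{asymptotically best} sequence, since that presupposes $\mu=\mu_V^{\mathrm{opt}}$. Setting $P_N:=P_{N_k}$ for $N_k\le N<N_{k+1}$ suffices, because the liminf along $\{N_k\}$ dominates the liminf of the completed sequence.
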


\begin{proof}
Set $m_N(K) := \inf\{\delta_V(K,P): P\in\mathscr{P}_N(K)\}$. 
By \eqref{eq:best-vol-asymp}, the sequence $\{\NNN m_N(K)\}$ converges to
$\frac{\dell_{d-1}}{2}\cdot\inf_{\nu\in\mathcal{P}(\partial K)}\mathcal{J}_V(\nu)$. Since $\{P_N\}$ is asymptotically best for volume, we have
\[
   \frac{\delta_V(K,P_N)}{m_N(K)} \longrightarrow 1 \qquad \text{as }N\to\infty.
\]
Thus,
\[
   \NNN \delta_V(K,P_N)
   \longrightarrow
   \frac{\dell_{d-1}}{2}\cdot\inf_{\nu\in\mathcal{P}(\partial K)}\mathcal{J}_V(\nu) \qquad \text{as }N\to\infty.
\]
In particular, $\limsup_{N\to\infty}\NNN\delta_V(K,P_N)<\infty$, so any weak
subsequential limit $\mu$ satisfies $\mathcal J_V(\mu)<\infty$ by
\eqref{eq:JV-liminf}.

Since $\partial K$ is compact, the space $\mathcal P(\partial K)$ of
probability measures is weakly compact. Hence any subsequence of
$\{\mu_N\}$ has a further subsequence $\mu_{N_k}$ converging weakly
to some $\mu$. Applying the lower semicontinuity inequality in Gruber’s quantization
theorem \cite[Theorem 6]{Gruber2004} along this subsequence gives
\[
\liminf_{k\to\infty}
N_k^{\frac{2}{d-1}}\delta_V(K,P_{N_k})
\ge
\frac{\dell_{d-1}}{2}\cdot\mathcal{J}_V(\mu).
\]
Since
\[
N^{\frac{2}{d-1}}\delta_V(K,P_N)
\longrightarrow
\frac{\dell_{d-1}}{2}\cdot\inf_{\nu\in\mathcal{P}(\partial K)}\mathcal{J}_V(\nu)\qquad \text{as }N\to\infty,
\]
the sequence $N^{\frac{2}{d-1}}\delta_V(K,P_N)$ is bounded, and therefore
the limit inferior above is finite. Hence $\mathcal J_V(\mu)<\infty$, so $\mu$
is absolutely continuous with respect to $dS$ and the functional is
well-defined. On the other hand, since $\{P_N\}$ is asymptotically best for volume,
\[
N^{\frac{2}{d-1}}\delta_V(K,P_N)
\longrightarrow
\frac{\dell_{d-1}}{2}\cdot\inf_{\nu\in\mathcal{P}(\partial K)}
\mathcal{J}_V(\nu)\qquad \text{as }N\to\infty,
\]
and therefore the same limit holds along the subsequence $N_k$.
Hence
\[
\frac{\dell_{d-1}}{2}\cdot\inf_{\nu\in\mathcal{P}(\partial K)}
\mathcal{J}_V(\nu)
\ge
\frac{\dell_{d-1}}{2}\cdot\mathcal{J}_V(\mu),
\]
which implies $\mathcal J_V(\mu)\le \inf_{\nu\in\mathcal P(\partial K)}\mathcal J_V(\nu)$.
Since the right-hand side is the greatest lower bound of $\mathcal J_V$,
we conclude that
\[
\mathcal{J}_V(\mu)
=
\inf_{\nu\in\mathcal{P}(\partial K)}\mathcal{J}_V(\nu).
\]

By Theorem~\ref{thm:ext-vol}(ii), the minimizer of $\mathcal{J}_V$ is unique
and equals $\mu_V^{\mathrm{opt}}$. Thus every subsequence of $\{\mu_N\}$ has a further subsequence converging
weakly to $\mu_V^{\mathrm{opt}}$. Since $\partial K$ is compact, the weak
topology on $\mathcal P(\partial K)$ is metrizable, and in a metrizable
compact space the condition that every subsequence has a further subsequence
converging to the same limit implies convergence of the whole sequence. (For example, since $\partial K$ is a compact metric space, $\mathcal P(\partial K)$
is weakly compact and metrizable; see, e.g., \cite[Chapter~1]{Billingsley}.) Hence $\mu_N\rightharpoonup\mu_V^{\mathrm{opt}}$.

\end{proof}

\begin{lemma}[Asymptotically best mean width approximants]
\label{lem:mw-limit-asymp-det}
Let $K\subset\mathbb{R}^d$ be $C_+^2$ and let $\{P_N\}$ be a sequence in
$\mathscr{P}_N(K)$ which is asymptotically best for mean width, i.e., 
\eqref{eq:asymp-best-mw} holds. Then
\[
   \mu_N \rightharpoonup \mu_W^{\mathrm{opt}},
\]
where $\mu_W^{\mathrm{opt}}$ has density \eqref{eq:muWopt-density}.
\end{lemma}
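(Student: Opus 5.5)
The argument will follow the proof of Lemma~\ref{lem:vol-limit-asymp-det} step by step, replacing Theorem~\ref{thm:ext-vol} with Theorem~\ref{thm:ext-mw}. Put $m_N^W(K):=\inf\{\delta_W(K,P):P\in\mathscr P_N(K)\}$. By \eqref{eq:best-mw-asymp} and the explicit minimizer of $\mathcal J_W$ (Lemma~\ref{lem:holder-optimal-density} with $a=\frac{2}{d-1}$ and $w=\kappa^{\frac{d}{d-1}}$), we have
\[
N^{\frac{2}{d-1}}m_N^W(K)\longrightarrow \frac{\divv_{d-1}}{d\vol_d(B_d)}\inf_{\nu\in\mathcal P(\partial K)}\mathcal J_W(\nu)=\frac{\divv_{d-1}}{d\vol_d(B_d)}\Bigl(\int_{\partial K}\kappa^{\frac{d}{d+1}}\,dS\Bigr)^{\frac{d+1}{d-1}}.
\]
Hypothesis \eqref{eq:asymp-best-mw} then shows that $N^{\frac{2}{d-1}}\delta_W(K,P_N)$ converges to the same finite limit. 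In particular this sequence is bounded.

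Next comes the compactness step. $\partial K$ is a compact metric space, so $\mathcal P(\partial K)$ is weakly compact and metrizable. Take any subsequence of $\{\mu_N\}$. It has a further subsequence $\mu_{N_k}\rightharpoonup\mu$ for some $\mu\in\mathcal P(\partial K)$. Apply the liminf inequality \eqref{eq:JW-liminf} along $N_k$ (this rests on Gruber's quantization theorem \cite[Theorem~6]{Gruber2004} via Lemma~\ref{lem:reduction-quantization}). Combined with the convergence from the first step, it gives
\[
\frac{\divv_{d-1}}{d\vol_d(B_d)}\inf_\nu\mathcal J_W(\nu)\ \ge\ \frac{\divv_{d-1}}{d\vol_d(B_d)}\mathcal J_W(\mu).
\]
So $\mathcal J_W(\mu)<\infty$. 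Hence $\mu$ is absolutely continuous with a density $f>0$ a.e., and $\mu$ attains the infimum.

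For uniqueness, invoke the equality case of Lemma~\ref{lem:holder-optimal-density} with $w=\kappa^{\frac{d}{d-1}}$. Here $w$ is continuous and positive on the compact set $\partial K$, so it is integrable. The equality case forces $f=f_W^{\rm opt}$ a.e., that is, $\mu=\mu_W^{\rm opt}$. We have shown that every subsequence has a further subsequence converging weakly to the same limit $\mu_W^{\rm opt}$. By metrizability, the whole sequence converges: $\mu_N\rightharpoonup\mu_W^{\rm opt}$.

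The only delicate point is the normalization in \eqref{eq:JW-liminf}. The empirical measure $\mu_N$ is normalized by $f_0(P_N)$, while the scaling factor is $N^{\frac{2}{d-1}}$. Lemma~\ref{lem:f0-asymp-N} together with Remark~\ref{rem:muN-vs-N} gives $f_0(P_N)/N\to1$, so the two scalings agree up to a factor $1+o(1)$. The liminf inequality therefore applies unchanged. Apart from this check, the argument is routine.
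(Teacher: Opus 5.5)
Your proposal is correct and follows the paper's proof essentially step for step. You get convergence of the scaled optimal error from \eqref{eq:best-mw-asymp}; compactness of $\mathcal P(\partial K)$ plus the liminf inequality \eqref{eq:JW-liminf} then forces every subsequential limit to minimize $\mathcal J_W$, and uniqueness of the minimizer together with metrizability gives convergence of the full sequence. The differences are cosmetic: you obtain uniqueness from the equality case of Lemma~\ref{lem:holder-optimal-density} rather than citing Theorem~\ref{thm:ext-mw}(ii). Your $f_0(P_N)/N\to1$ check is harmless but unnecessary, since \eqref{eq:JW-liminf} is already stated with the $N^{\frac{2}{d-1}}$ scaling, and $f_0(P_N)\le N$ gives the needed direction anyway.
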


\begin{proof}
Set $m_N^W(K):=\inf\{\delta_W(K,P):P\in\mathscr{P}_N(K)\}$.
By \eqref{eq:best-mw-asymp}, the sequence
$\{N^{\frac{2}{d-1}}m_N^W(K)\}$ converges to
\[
\frac{\divv_{d-1}}{d\,\mathrm{vol}_d(B_d)}
\cdot
\inf_{\nu\in\mathcal P(\partial K)}\mathcal J_W(\nu)
\]
as $N\to\infty$. Since $\{P_N\}$ is asymptotically best for mean width,
\[
\frac{\delta_W(K,P_N)}{m_N^W(K)}\to1,
\]
and hence
\[
N^{\frac{2}{d-1}}\delta_W(K,P_N)
\to
\frac{\divv_{d-1}}{d\,\mathrm{vol}_d(B_d)}\cdot
\inf_{\nu\in\mathcal P(\partial K)}\mathcal J_W(\nu).
\]

Since $\partial K$ is compact, $\mathcal P(\partial K)$ is weakly compact.
Thus any subsequence of $\{\mu_N\}$ admits a further subsequence
$\mu_{N_k}\rightharpoonup\mu$.
Applying the lower semicontinuity inequality for the mean width functional (see \cite[Theorem~6]{Gruber2004} in conjunction with the identification of the curvature weight in \cite[Theorem~3]{glasgrub}) yields
\[
\liminf_{k\to\infty}
N_k^{\frac{2}{d-1}}\delta_W(K,P_{N_k})
\ge
\frac{\divv_{d-1}}{d\,\mathrm{vol}_d(B_d)}\cdot\mathcal J_W(\mu).
\]
In particular, $\limsup_{N\to\infty}\NNN\delta_W(K,P_N)<\infty$, so any weak
subsequential limit $\mu$ satisfies $\mathcal J_W(\mu)<\infty$ by
\eqref{eq:JW-liminf}.

The scaled errors are bounded, hence $\mathcal J_W(\mu)<\infty$.
Since the same limit holds along the subsequence $N_k$, we obtain
\[
\mathcal J_W(\mu)\le
\inf_{\nu\in\mathcal P(\partial K)}\mathcal J_W(\nu).
\]
Thus by definition of the infimum,
\[
\mathcal J_W(\mu)=
\inf_{\nu\in\mathcal P(\partial K)}\mathcal J_W(\nu).
\]

By Theorem~\ref{thm:ext-mw}(ii) the minimizer is unique and equals
$\mu_W^{\mathrm{opt}}$.
Hence, every subsequence has a further subsequence converging weakly to
$\mu_W^{\mathrm{opt}}$. Since $\partial K$ is compact, the weak topology on
$\mathcal P(\partial K)$ is metrizable, and the same subsequence argument as
above implies convergence of the full sequence. Therefore, $\mu_N\rightharpoonup\mu_W^{\mathrm{opt}}$. 
\end{proof}

\subsection{Deterministic rigidity theorem for inscribed polytopes}

We are now ready to prove Theorem \ref{thm:det-rigidity}. Let us first reformulate it in terms of asymptotically best sequences.

\begin{theorem}[Deterministic rigidity for inscribed polytopes]
\label{thm:det-rigidity-det}
Let $K\subset\mathbb{R}^d$ be a $C_+^2$ convex body. Suppose there exists a
sequence $\{P_N\}\subset\mathscr{P}_N(K)$ which is asymptotically best
for both volume and mean width, in the sense that
\eqref{eq:asymp-best-vol} and \eqref{eq:asymp-best-mw} hold simultaneously.
Then $K$ has constant Gauss curvature and hence is a Euclidean ball.
\end{theorem}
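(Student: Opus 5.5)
The plan is to compare the two limiting vertex distributions forced on the single sequence $\{P_N\}$. Since the sequence is asymptotically best for volume, Lemma~\ref{lem:vol-limit-asymp-det} gives $\mu_N\rightharpoonup\mu_V^{\mathrm{opt}}$. Since it is also asymptotically best for mean width, Lemma~\ref{lem:mw-limit-asymp-det} gives $\mu_N\rightharpoonup\mu_W^{\mathrm{opt}}$. Here $\mu_N$ is the same empirical vertex measure in both cases, because it depends only on $P_N$. Weak limits in $\mathcal P(\partial K)$ are unique, so $\mu_V^{\mathrm{opt}}=\mu_W^{\mathrm{opt}}$ as Borel measures on $\partial K$.

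Both measures are absolutely continuous with respect to $dS$, with the densities \eqref{eq:muVopt-density} and \eqref{eq:muWopt-density}. Their densities therefore agree $dS$-a.e. Since $K$ is $C^2_+$, $\kappa$ is continuous and strictly positive, so both densities are continuous, and a.e. equality upgrades to pointwise equality on $\partial K$ (every nonempty relatively open set has positive surface measure). Writing $A=\int_{\partial K}\kappa^{1/(d+1)}\,dS$ and $B=\int_{\partial K}\kappa^{d/(d+1)}\,dS$, we get
\[
\kappa(x)^{\frac{d}{d+1}-\frac{1}{d+1}}=\kappa(x)^{\frac{d-1}{d+1}}=\frac{B}{A}\qquad\text{for all }x\in\partial K.
\]
For $d\ge2$ the exponent $\frac{d-1}{d+1}$ is positive, so $\kappa\equiv (B/A)^{\frac{d+1}{d-1}}$ is constant on $\partial K$.

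The last step is to conclude that $K$ is a ball from constant Gauss curvature. I would invoke the classical theorem, due to Liebmann for $d=3$ and Alexandrov/Hsiung in general, that a closed $C^2$ hypersurface in $\R^d$ with constant positive Gauss–Kronecker curvature is a round sphere. Equivalently, in Minkowski-problem language, $\partial K$ has constant curvature function $1/\kappa$ on $\Sp$, and by uniqueness in the Minkowski problem $K$ is a translate of a ball. For $d=2$ the statement reduces to constant curvature of a closed convex curve, which forces a circle.

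Essentially all analytic difficulty is already packaged in Lemmas~\ref{lem:vol-limit-asymp-det} and \ref{lem:mw-limit-asymp-det}, namely in Gruber's quantization theory. The only points needing care here are:
\begin{itemize}
\item that the empirical measure is literally identical for both functionals;
\item the passage from a.e. to everywhere equality via continuity;
\item citing the right rigidity theorem for constant Gauss curvature.
\end{itemize}
I expect this last citation to be the main point requiring a precise reference, rather than a genuine obstacle.
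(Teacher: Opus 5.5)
Your proposal is correct and follows the paper's proof step for step. Like the paper, you combine Lemmas~\ref{lem:vol-limit-asymp-det} and \ref{lem:mw-limit-asymp-det} with uniqueness of weak limits to get $\mu_V^{\mathrm{opt}}=\mu_W^{\mathrm{opt}}$, compare the densities \eqref{eq:muVopt-density} and \eqref{eq:muWopt-density} to get $\kappa^{\frac{d-1}{d+1}}$ constant a.e.\ and hence everywhere by continuity, and finish with a rigidity theorem for closed convex hypersurfaces of constant Gauss curvature; the paper cites Aleksandrov at that point, while your alternative via uniqueness in the Minkowski problem (the surface area measure of $K$ is then a constant multiple of $\sigma$) is equally valid.
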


\begin{proof}
By Lemma~\ref{lem:vol-limit-asymp-det}, the empirical measures $\mu_N$ satisfy $\mu_N \rightharpoonup \mu_V^{\mathrm{opt}}$, while by Lemma~\ref{lem:mw-limit-asymp-det}, the same sequence satisfies $\mu_N \rightharpoonup \mu_W^{\mathrm{opt}}$. 
Therefore, $\mu_V^{\mathrm{opt}}=\mu_W^{\mathrm{opt}}$. Using the densities \eqref{eq:muVopt-density} and \eqref{eq:muWopt-density},
we obtain
\[
   \kappa(x)^{\frac{1}{d+1}}
   =
   c(K)\kappa(x)^{\frac{d}{d+1}}
   \qquad\text{for $dS$-a.e.\ }x\in\partial K,
\]
for some constant $c(K)>0$. Since $\kappa(x)>0$ on $\partial K$, we may divide and obtain
\[
   \kappa(x)^{\frac{d-1}{d+1}} = c(K)^{-1} \qquad\text{for $dS$-a.e.\ }x\in\partial K.
\]
Since $\kappa$ is continuous on $\partial K$, this equality holds everywhere on $\partial K$, hence $\kappa$ is constant. Since $\partial K$ is compact, strictly convex, and $C^2$, Aleksandrov's
uniqueness theorem for closed convex $C^2$ hypersurfaces with constant Gauss
curvature implies that $\partial K$ is a Euclidean sphere up to translation; see
\cite{aleksandrov-1962}.
\end{proof}

\begin{remark}
Theorem~\ref{thm:det-rigidity-det}, together with a result of \cite{BHK}, shows that among all $C_+^2$ convex bodies in $\R^d$, the Euclidean ball $B_d$ is the only one that admits a single sequence of inscribed polytopes which is asymptotically optimal (in the sharp $\NN$ sense) for both the volume and mean width differences simultaneously. 
\end{remark}

\section{Random rigidity for inscribed polytopes}
\label{sec:inscribed-rand}

In this section, we prove the analogous rigidity result for random inscribed polytopes. For the relevant background on random polytopes, we refer the reader to  \cite{Affentranger1991,BFH2013,Muller1989,Muller1990,Reitzner2002,Reitzner-2010,SW2003}.

Throughout this section, let $K\subset\mathbb{R}^d$ be a $C_+^2$ convex body and
$\partial K$ its boundary with Gauss curvature $\kappa>0$. For each Borel
probability density $\rho$ on $\partial K$ which is continuous and strictly positive, we consider i.i.d.\ random points
$X_1,\dots,X_N$ with distribution $\rho\,dS$ and the inscribed random polytope
\[
    P_N(\rho) := \conv\{X_1,\dots,X_N\}.
\]
For $j\in\{1,\ldots,d\}$, since $P_N(\rho)\subset K$ almost surely, the expected intrinsic volume deviation reduces to
\[
    \E[\Delta_j(K,P_N(\rho))]
    = V_j(K) - \mathbb{E}[V_j(P_N(\rho))].
\]

\subsection{Sharp asymptotics and optimal densities for random approximation}

For a $C_+^2$ convex body $K\subset\R^d$, the sharp asymptotic behavior of the intrinsic volumes of
random polytopes with vertices on $\partial K$ has the following form
(see \cite{Reitzner2002}): 

\begin{theorem}\label{thm:random-asymp-correct}
For each $j\in\{1,\dots,d\}$, there exists a constant $c_{d,j}>0$ (that depends only on $d$ and $j$) such that
for every probability density $\rho$ on $\partial K$,
\begin{equation}\label{eq:random-asymp-correct}
    \E[\Delta_j(K,P_N(\rho))]
    \sim
    c_{d,j}\mathcal{I}_j(\rho)\NN
    \qquad\text{as }N\to\infty,
\end{equation}
where 
\begin{equation}\label{eq:I_j-correct}
    \mathcal{I}_j(\rho)
    :=
    \int_{\partial K}
        w_j(x)\rho(x)^{-\frac{2}{d-1}}
        \,dS(x),
    \qquad
    w_j(x)
    :=
    \kappa(x)^{\frac{1}{d-1}}H_{d-j}(x).
\end{equation}
\end{theorem}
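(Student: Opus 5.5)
Since $P_N(\rho)\subset K$ almost surely, $\E[\Delta_j(K,P_N(\rho))]=V_j(K)-\E[V_j(P_N(\rho))]$. The plan is to reduce this expectation to an integral over $\partial K$ of local cap contributions, and then evaluate each local contribution by a second-order (paraboloid) approximation of $\partial K$.

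First, I would write $V_j$ as a mixed volume, $V_j(L)=c\,V(L[j],B_d[d-j])$. For $L\subset K$, the difference $V_j(K)-V_j(L)$ can then be expressed through the Steiner/Kubota representation. An equivalent route is the formula $V_j(K)-V_j(L)=\int$ over the region $K\setminus L$ weighted via projections; I would use Kubota's formula and apply the known $j$-dimensional volume asymptotics to each projection $K|H$. This is awkward, since the projected points are not on $\partial(K|H)$. I therefore prefer the direct approach of Reitzner \cite{Reitzner2002}. Decompose $K\setminus P_N$ into the caps cut off by facets of $P_N$. Each facet $F$ with outer normal $u$ contributes to $V_j(K)-V_j(P_N)$, to leading order, an amount proportional to $H_{d-j}$ at the touching point times the volume of the cap. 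This follows from expanding the mixed volume locally: the cap is thin, of height $h$, and the loss in $V_j$ equals $\int_{\text{cap}}$ of the normalized $(d-j)$-th elementary symmetric function of the principal curvatures, to leading order.

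Second, I would localize. Fix $x\in\partial K$ and use the standard identity $\E[\cdot]=\binom{N}{d}\int_{(\partial K)^d}\mathbb P(\text{no other point beyond the hyperplane})\times(\text{local contribution})\prod\rho\,dS$. Approximating $\partial K$ near $x$ by the osculating paraboloid $\frac12 Q_x$ (whose determinant is $\kappa(x)$) gives two inputs. The probability mass of a cap of height $h$ is $\approx\rho(x)c\,h^{\frac{d-1}{2}}\kappa(x)^{-1/2}$, and the volume of that cap is $\approx c\,h^{\frac{d+1}{2}}\kappa(x)^{-1/2}$. Carrying out the Blaschke--Petkantschin type change of variables on $(\partial K)^d$, together with the Beta integral in $h$, yields a local density of the form $c_{d,j}\,\kappa(x)^{\frac{1}{d-1}}H_{d-j}(x)\rho(x)^{-\frac{2}{d-1}}N^{-\frac{2}{d-1}}$. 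The exponents are forced by scaling: $h\sim(N\rho\kappa^{-1/2})^{-\frac{2}{d-1}}$, and the cap loss per unit surface is proportional to $h$. Integrating over $x$ gives \eqref{eq:random-asymp-correct} with weight $w_j$.

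Third, I would justify the interchange of limit and integral by dominated convergence. This uses the uniform bounds $0<\min\rho\le\rho\le\max\rho$ (here continuity and positivity of $\rho$ are required; for a general density I would instead rely on the cited theorem), together with $C^2_+$ two-sided paraboloid comparisons on uniformly small neighborhoods. It also requires showing that caps of height $\gg N^{-\frac{2}{d-1}}\log N$ contribute $o(N^{-\frac{2}{d-1}})$, since the probability that such a cap contains no sample point is at most $(1-cN^{-1}\log N)^{N}$.

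The main obstacle is the first step, namely identifying the correct local weight for $V_j$ with $j<d$. The loss in $V_j$ is not the volume of the caps; it is a curvature-weighted quantity, and one has to show rigorously that its leading term is $H_{d-j}(x)$ times the cap volume. I would attempt this by writing $V_j(K)-V_j(P_N)=\frac{1}{?}\int_{S^{d-1}}(h_K-h_{P_N})\,dS_{j-1}$, using mixed area measures and linearizing in the support-function difference. Since $h_K-h_{P_N}$ is supported near normals of caps and is small there, the leading term becomes $\int(h_K-h_{P_N})\,s_{j-1}(\text{radii of curvature})\,du$. Converting to $\partial K$ via the Gauss map turns $s_{j-1}$ of the radii times the Jacobian $\kappa$ into $H_{d-j}$.
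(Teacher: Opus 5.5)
\textbf{There is a genuine gap, and it sits at the step you yourself flag as the main obstacle.} The paper gives no proof of this statement; it quotes \cite{Reitzner2002}. Your outline is the standard one for $j=d$, provided the facet contributions are taken to be the pieces of $K\setminus P_N$ lying over each facet (the caps $K\cap H_F^+$ overlap). For $j<d$, however, neither mechanism you propose for producing the weight $H_{d-j}$ is valid.

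Both mechanisms are first-variation formulas, and $P_N$ is not a $C^2$-small perturbation of $K$. The difference $h_K-h_{P_N}$ is uniformly small (of order $\NN$ up to logarithms), but it oscillates on the facet scale $N^{-\frac{1}{d-1}}$. Hence second-order terms contribute at the same order. Keeping only $\int(h_K-h_{P_N})\,dS_{j-1}(K,\cdot)$ discards the mixed terms $\int(h_K-h_{P_N})\,dS(K[i],P_N[j-1-i],B_d[d-j],\cdot)$ for $i<j-1$. Their measures are singular and concentrated on normals of positive-dimensional faces of $P_N$, where $h_K-h_{P_N}$ is comparatively large. Already for $d=j=2$, $K=B_2$ and $P$ the inscribed regular $n$-gon, one has $\vol_2(K)-\vol_2(P)=\tfrac12\int(h_K-h_P)\,dS_1(K,\cdot)+\tfrac12\int(h_K-h_P)\,dS_1(P,\cdot)\approx\frac{\pi^3}{6n^2}+\frac{\pi^3}{2n^2}$. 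So your linearized term $\int(h_K-h_P)\,dS_1(K,\cdot)\approx\frac{\pi^3}{3n^2}$ is only half of the true loss.

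Structurally, $h_K(\nu_K(y))-h_P(\nu_K(y))$ is, to leading order, half the squared distance (in the second fundamental form) from $y$ to the nearest vertex. That is a Voronoi-type functional, whereas the volume loss is of Delaunay type; this is exactly why $\divv_{d-1}$ and $\dell_{d-1}$ appear separately in Theorems~\ref{thm:ext-mw} and \ref{thm:ext-vol}.

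Your other formulation (``loss in $V_j$ $=\int_{\mathrm{cap}}H_{d-j}$'') fails in the opposite direction. It is exact for $j=d$, but for $d=2$, $j=1$ it overestimates the loss by a factor $2$. For $d\ge 3$, no per-facet or even per-configuration proportionality between the $V_1$-loss and the volume loss can hold, because the Delaunay/Voronoi ratio depends on the vertex arrangement: it is $2$ for a square and $9/5$ for a hexagonal lattice in the tangent plane.

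The final formula nevertheless has the right shape. In expectation, after normalizing the second fundamental form at $x$ and using rotation invariance of the limiting local Poisson model, each local contribution becomes $H_{d-j}(x)$ times a universal constant. That universality is the real content of the theorem, and your argument does not establish it; carried out literally, it gives the wrong constant (e.g.\ half the correct one when $d=j=2$).

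One workable route is the one you set aside.
\begin{enumerate}
\item[(1)] By Kubota, the loss is an average of $\vol_j(K|H)-\vol_j(P_N|H)$, which is concentrated near the shadow boundary $\{x:\nu_K(x)\in H\}$.
\item[(2)] In coordinates where the second fundamental form at $x$ is the identity, projecting along $H^\perp$ amounts to projecting along an orthogonal $(d-j)$-dimensional block of the tangent space. Hence the expected loss per unit $(j-1)$-area of $\partial(K|H)$ is a universal constant times $(N\rho(x)\kappa(x)^{-1/2})^{-\frac{2}{d-1}}$.
\item[(3)] The projection formula for area measures, $\int_{\Gr(d,j)}S^{H}_{j-1}(K|H,\cdot)\,d\nu_j(H)\propto S_{j-1}(K,\cdot)$, together with the fact that $S_{j-1}(K,\cdot)$ pulls back to $H_{d-j}\,dS$ under the Gauss map, then yields $c_{d,j}\mathcal{I}_j(\rho)\NN$.
\end{enumerate}
Alternatively, keep all the mixed terms and evaluate each of them locally in expectation.
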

Here $H_{d-j}(x)$ denotes the $(d-j)$th normalized elementary symmetric
polynomial of the principal curvatures of $\partial K$ at $x$. Since $K$ is $C^2_+$, all principal curvatures are positive and hence
$H_{d-j}(x)>0$, so $w_j(x)>0$ on $\partial K$. 

\begin{remark}
    Since the constant $c_{d,j}$ does not depend on the body $K$ nor on the density $\rho$, we are free to choose their values in a way that allows us to compute $c_{d,j}$. Equating the general asymptotic constant in Theorem~\ref{thm:random-asymp-correct}
with the explicit asymptotic formula of Affentranger \cite{Affentranger1991} for $K=B_d$ and the uniform density $\rho=\vol_{d-1}(\partial B_d)^{-1}\mathbbm{1}_{\partial B_d}$, the constant can be identified by comparison, yielding
    \[
c_{d,j}=\frac{jV_j(B_d)}{2}\cdot\alpha(d,j)
    \]
    where
    \[
\alpha(d,j):=\frac{d-1}{d+1}\left(\frac{d\vol_d(B_d)}{\vol_{d-1}(B_{d-1})}\right)^{\frac{2}{d-1}}\frac{\Gamma\bigl(j+1+\frac{2}{d-1}\bigr)}{\Gamma(j+1)}.
    \]
    For more background, see \cite{BHK}, where it follows from the results in Appendix~B that $\alpha(d,j)=1+\Theta\!\bigl(\tfrac{\ln d}{d}\bigr)$.
\end{remark}

As we show next, the functional $\mathcal{I}_j$ is strictly convex in $\rho$, and its unique
minimizer can be computed explicitly.

\begin{lemma}[Strict convexity and unique minimizer]\label{lem:I_j-convex}
For each $j$, the functional $\mathcal I_j$ is strictly convex along affine
combinations of densities: if $\rho_1,\rho_2$ are two probability densities
with respect to $dS$ and $\rho_t=(1-t)\rho_1+t\rho_2$ for $t\in(0,1)$, then
$\mathcal I_j(\rho_t)<(1-t)\mathcal I_j(\rho_1)+t\mathcal I_j(\rho_2)$ whenever
$\rho_1\neq\rho_2$ on a set of positive $dS$-measure. Equivalently,
\begin{equation}\label{eq:rho-j-opt-correct}
    \rho_j^{\mathrm{opt}}(x)
    \;\propto\;
    \kappa(x)^{\frac{1}{d+1}}H_{d-j}(x)^{\frac{d-1}{d+1}}.
\end{equation}
\end{lemma}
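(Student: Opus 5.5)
The plan splits into two parts: strict convexity of $\mathcal I_j$ from pointwise strict convexity of $s\mapsto s^{-a}$, and identification of the minimizer via Lemma~\ref{lem:holder-optimal-density}.

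\textbf{Strict convexity.} Set $a=\frac{2}{d-1}>0$. The map $\phi(s)=s^{-a}$ is strictly convex on $(0,\infty)$, since $\phi''(s)=a(a+1)s^{-a-2}>0$. Fix two probability densities $\rho_1,\rho_2$ and $t\in(0,1)$. At every $x$ with $\rho_1(x),\rho_2(x)>0$ we have
\[
\rho_t(x)^{-a}\le(1-t)\rho_1(x)^{-a}+t\rho_2(x)^{-a},
\]
with strict inequality exactly where $\rho_1(x)\neq\rho_2(x)$. Multiply by $w_j(x)=\kappa(x)^{\frac{1}{d-1}}H_{d-j}(x)$, which is strictly positive and continuous on $\partial K$, and integrate against $dS$. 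If one of $\mathcal I_j(\rho_1),\mathcal I_j(\rho_2)$ is $+\infty$, the strict inequality is understood in the extended sense, or we simply restrict to densities of finite energy. If both are finite, the integral of the nonnegative difference
\[
(1-t)\rho_1^{-a}+t\rho_2^{-a}-\rho_t^{-a},
\]
weighted by $w_j$, is strictly positive, because the integrand is positive on a set of positive $dS$-measure. This gives $\mathcal I_j(\rho_t)<(1-t)\mathcal I_j(\rho_1)+t\mathcal I_j(\rho_2)$. Densities that vanish on a set of positive measure have $\mathcal I_j=+\infty$, so they play no role.

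\textbf{The minimizer.} Strict convexity alone gives uniqueness of a minimizer, if one exists. For the explicit form I invoke Lemma~\ref{lem:holder-optimal-density} with $X=\partial K$, $\mu=dS$ (finite, since $\partial K$ is compact), $a=\frac{2}{d-1}$ and $w=w_j$. Integrability of $w_j$ holds because $w_j$ is continuous on the compact set $\partial K$. The lemma says the unique minimizer is proportional to $w_j^{1/(a+1)}$. Since $a+1=\frac{d+1}{d-1}$, we get
\[
w_j^{\frac{d-1}{d+1}}=\kappa^{\frac{1}{d+1}}H_{d-j}^{\frac{d-1}{d+1}},
\]
which is \eqref{eq:rho-j-opt-correct}. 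The lemma also gives the minimal value $\bigl(\int_{\partial K}w_j^{\frac{d-1}{d+1}}\,dS\bigr)^{\frac{d+1}{d-1}}$.

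\textbf{Main obstacle.} The calculus is routine. The only delicate point is reading ``Equivalently'' correctly: strict convexity plus existence of a minimizer (supplied by the Hölder lemma) yields uniqueness. Convexity alone does not pin down the formula, so the explicit density must come from Lemma~\ref{lem:holder-optimal-density}. A secondary check is that $\rho_j^{\mathrm{opt}}$ is continuous and strictly positive, so it is admissible in the random model. This follows from $\kappa>0$, $H_{d-j}>0$ and the $C^2$ regularity of $\partial K$.
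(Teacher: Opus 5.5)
Your proposal is correct and follows the paper's proof essentially verbatim: pointwise strict convexity of $s\mapsto s^{-2/(d-1)}$, multiplied by $w_j>0$ and integrated, then Lemma~\ref{lem:holder-optimal-density} with $a=\frac{2}{d-1}$ and $w=w_j$ for the explicit minimizer. Restricting to finite-energy densities is slightly more careful than the paper, and it is in fact needed. If $\rho_1$ and $\rho_2$ both vanish on a common set of positive measure, then $\mathcal I_j(\rho_t)=+\infty$ too, so the strict inequality fails even in your ``extended sense''; that fallback should be dropped.
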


\begin{proof}
Fix $j\in\{1,\ldots,d\}$. For $\rho\in\mathcal{P}(\partial K)$ and
$t\in(0,1)$, set $\rho_t:=(1-t)\rho_1+t\rho_2$. The map
$s\mapsto s^{-\frac{2}{d-1}}$ is strictly convex on $(0,\infty)$, so for
each $x$ with $\rho_1(x)\neq\rho_2(x)$ we have
\[
    \rho_t(x)^{-\frac{2}{d-1}}
    <
    (1-t)\rho_1(x)^{-\frac{2}{d-1}}
    + t\rho_2(x)^{-\frac{2}{d-1}}.
\]
Multiplying both sides by $w_j(x)>0$ and integrating yields
\[
    \mathcal{I}_j(\rho_t)
    <
    (1-t)\mathcal{I}_j(\rho_1) + t\mathcal{I}_j(\rho_2),
\]
whenever $\rho_1\neq\rho_2$ on a set of positive surface measure. Hence,
$\mathcal{I}_j$ is strictly convex on $\mathcal{P}(\partial K)$.

To find the minimizer under the constraint $\int\rho\,dS=1$, we  apply Lemma~\ref{lem:holder-optimal-density} with $X=\partial K$, $\mu=dS$, $a=2/(d-1)$ and $w=w_j$, which yields \eqref{eq:rho-j-opt-correct}.
\end{proof}

\subsection{Random rigidity for inscribed polytopes}

We now formulate and prove the random rigidity theorem for inscribed polytopes.

\begin{theorem}[Random rigidity for inscribed polytopes]\label{thm:random-rigidity}
Let $K\subset\mathbb{R}^d$ be a $C_+^2$ convex body and fix two distinct
indices $1\leq j_1<j_2\leq d$. Suppose there exists a single probability
density $\rho$ on $\partial K$ which is continuous, strictly positive, and such that  for $i=1,2$,
\begin{equation}\label{eq:random-asymp-best-correct}
    \frac{\E[\Delta_{j_i}(K,P_N(\rho))]}
         {\displaystyle\inf_{\eta\in\mathcal{P}(\partial K)}
           \E[\Delta_{j_i}(K,P_N(\eta))]}
    \;\longrightarrow\; 1
    \qquad\text{as }N\to\infty.
\end{equation}
Then $K$ must have constant Gauss curvature. In particular,
$K$ is a Euclidean ball.
\end{theorem}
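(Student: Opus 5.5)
The plan is to show that asymptotic optimality in \eqref{eq:random-asymp-best-correct} forces $\rho=\rho_{j_i}^{\mathrm{opt}}$ for $i=1,2$. Equating the two optimal densities then gives a pointwise identity between curvature functions, and that identity will force $\kappa$ to be constant.

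\textbf{Step 1: identify the limit of the denominator.} By Theorem~\ref{thm:random-asymp-correct}, for every density $\eta$,
\[
\E[\Delta_j(K,P_N(\eta))]\sim c_{d,j}\mathcal I_j(\eta)\NN .
\]
Hence for each fixed $\eta$,
\[
\limsup_N \NNN\inf_{\eta'}\E[\Delta_j(K,P_N(\eta'))]\le c_{d,j}\mathcal I_j(\eta).
\]
Taking $\eta=\rho_j^{\mathrm{opt}}$, which is continuous and positive because $K$ is $C^2_+$, the limsup is at most $c_{d,j}\min\mathcal I_j$. For the matching liminf, note that $\E[\Delta_j]$ depends on $\eta$ only through the law of the sample points. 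I would then use the lower bound in Reitzner's asymptotics, which is valid uniformly in the density, or alternatively a convexity or local argument, to get
\[
\liminf_N \NNN\inf_\eta\E[\Delta_j]\ge c_{d,j}\min\mathcal I_j .
\]
This uniformity is the main obstacle. Theorem~\ref{thm:random-asymp-correct} is stated density by density, and the infimum runs over all Borel probability measures, including singular ones. My first attempt would be to show that the expected deviation dominates a quantization-type lower bound in the spirit of Gruber's liminf inequality: pass to weak limits of near-optimal $\eta_N$ and use lower semicontinuity of $\mathcal I_j$, as in Lemma~\ref{lem:vol-limit-asymp-det}. Failing that, I would read the hypothesis as comparing against the leading constants. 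In that reading the denominator is asymptotically $c_{d,j}\bigl(\int w_j^{\frac{d-1}{d+1}}dS\bigr)^{\frac{d+1}{d-1}}\NN$ by Lemma~\ref{lem:holder-optimal-density}.

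\textbf{Step 2: force $\rho$ to be optimal.} The numerator is $\sim c_{d,j_i}\mathcal I_{j_i}(\rho)\NN$, since $\rho$ is a fixed continuous positive density. The ratio hypothesis therefore gives $\mathcal I_{j_i}(\rho)=\min\mathcal I_{j_i}$. By the equality case of Lemma~\ref{lem:holder-optimal-density} (equivalently Lemma~\ref{lem:I_j-convex}),
\[
\rho=\rho_{j_i}^{\mathrm{opt}}\quad dS\text{-a.e.},
\]
and by continuity this holds everywhere, for $i=1,2$.

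\textbf{Step 3: derive the curvature identity.} From \eqref{eq:rho-j-opt-correct},
\[
\kappa^{\frac1{d+1}}H_{d-j_1}^{\frac{d-1}{d+1}}=c\,\kappa^{\frac1{d+1}}H_{d-j_2}^{\frac{d-1}{d+1}},
\]
so $H_{d-j_1}=c'H_{d-j_2}$ on $\partial K$ with $c'>0$ constant. Write $k=d-j_2<l=d-j_1$, so that $0\le k<l\le d-1$ and $H_l/H_k\equiv c'$.

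\textbf{Step 4: conclude via a Minkowski-type rigidity result.} If $k=0$, then $H_l$ is constant, and the Alexandrov--Ros theorem for constant higher-order mean curvature on closed embedded hypersurfaces shows $\partial K$ is a sphere. For general $k$, constancy of the ratio $H_l/H_k$ on a closed strictly convex hypersurface also implies sphericity; this is the Koh/Ros type theorem, which can be proved via the Minkowski integral formulas
\[
\int H_{r}\langle x,\nu\rangle\,dS=\int H_{r-1}\,dS
\]
combined with the Newton--Maclaurin inequalities, whose equality cases occur exactly at umbilic points. Once every point is umbilic, $\kappa$ is constant, and $K$ is a ball by the Aleksandrov result quoted in the proof of Theorem~\ref{thm:det-rigidity-det}.
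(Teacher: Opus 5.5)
Your proposal is correct, and Steps 1--3 follow the paper's argument: show $\mathcal{I}_{j_i}(\rho)=\min\mathcal{I}_{j_i}$, use the uniqueness in Lemma~\ref{lem:holder-optimal-density} to get $\rho=\rho_{j_1}^{\mathrm{opt}}=\rho_{j_2}^{\mathrm{opt}}$, and read off $H_{d-j_1}=c'H_{d-j_2}$. Two points of comparison are worth making.

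First, the uniformity problem you flag in Step 1 is genuine for a two-sided asymptotic of the infimum. The paper simply asserts $\inf_\eta\E[\Delta_j(K,P_N(\eta))]\sim c_{d,j}\min\mathcal{I}_j\,\NN$ from a statement that holds only density by density. However, you never need the lower bound. The denominator is at most $\E[\Delta_j(K,P_N(\rho_j^{\mathrm{opt}}))]$, which is $\sim c_{d,j}\min\mathcal{I}_j\,\NN$, while the numerator is $\sim c_{d,j}\mathcal{I}_j(\rho)\NN$. So the ratio tending to $1$ already forces $\mathcal{I}_j(\rho)\le\min\mathcal{I}_j$. The limsup bound you proved is all that is required. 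You can drop both the Gruber-type lower-semicontinuity attempt and the reinterpretation of the hypothesis.

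Second, for the final rigidity step the paper invokes Korevaar's Alexandrov-reflection theorem for elliptic Weingarten hypersurfaces, plus Montiel--Ros for $k=0$. To use it, the paper must argue that $\sigma_\ell/\sigma_k$ is elliptic on the positive cone, a point it only sketches. Your Minkowski-formula route is more elementary and self-contained. For $k\ge1$ the formulas give $\int H_{l-1}\,dS=c'\int H_{k-1}\,dS$. Newton--Maclaurin, together with $H_l=c'H_k$ and $H_k>0$, gives $H_{l-1}\ge c'H_{k-1}$ pointwise, with equality only at umbilic points. So umbilicity follows with no ellipticity check and no reflection argument. The paper's tool covers arbitrary elliptic curvature functions, but that generality is not needed for quotients of elementary symmetric functions.
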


In order to prove this result, we will need the following
\begin{lemma}[Proportional symmetric curvatures force umbilicity]\label{lem:symm-proportional}
Let $\Sigma\subset\mathbb R^d$ be a compact, connected, embedded $C^2$ strictly convex
hypersurface.  Let $\lambda_1(x),\dots,\lambda_{d-1}(x)>0$ be the principal curvatures at
$x\in\Sigma$.  For $m=0,1,\dots,d-1$, set
\[
\sigma_m(\lambda):=\sum_{1\le i_1<\cdots<i_m\le d-1}\lambda_{i_1}\cdots\lambda_{i_m},
\qquad
H_m:=\binom{d-1}{m}^{-1}\sigma_m,
\]
(with $H_0\equiv 1$).  Fix $0\le k<\ell\le d-1$.  If there exists a constant $c(K)>0$ such that
\[
H_k(x)=c(K)H_\ell(x)\qquad\text{for all }x\in\Sigma,
\]
then $\Sigma$ is totally umbilic, i.e.,  $\lambda_1(x)=\cdots=\lambda_{d-1}(x)$ for all
$x\in\Sigma$. Equivalently, $\Sigma$ is a round sphere.
\end{lemma}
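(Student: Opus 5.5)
Our plan is to prove this with the Alexandrov--Hsiung--Minkowski integral formulas combined with Newton--Maclaurin inequalities; this is the classical route for rigidity of ratios of higher mean curvatures. After a translation we may assume the origin lies in the interior of the convex body $K$ bounded by $\Sigma$. Then the support function $p(x)=\langle x,\nu(x)\rangle$ (with $\nu$ the outer unit normal) is strictly positive on $\Sigma$. For $0\le m\le d-2$ we recall the Minkowski formulas
\[
\int_\Sigma H_m\,dS=\int_\Sigma p\,H_{m+1}\,dS .
\]
These are valid for $C^2$ hypersurfaces: they follow from the divergence of the Newton tensor applied to the tangential part of the position field, or equivalently from the Steiner-type variation of the quermassintegrals. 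Iterating the formula from $m=k$ up to $m=\ell-1$ links $H_k$ and $H_\ell$.

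We will use two pointwise inequalities. Since all $\lambda_i>0$, the Maclaurin/Newton inequalities give $H_{m}^2\ge H_{m-1}H_{m+1}$. Consequently the quotient $H_{m+1}/H_m$ is nonincreasing in $m$, so
\[
\frac{H_{k+1}}{H_k}\ \ge\ \Bigl(\frac{H_\ell}{H_k}\Bigr)^{1/(\ell-k)} = c^{-1/(\ell-k)} =: \gamma .
\]
Equality holds at a point only if that point is umbilic, which is the equality case of Newton's inequalities for positive $\lambda$ (and is automatic in the trivial case $\ell=k+1$). Set $\gamma=c^{-1/(\ell-k)}$. The other tools are integral identities. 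From $H_\ell=c^{-1}H_k$ and the Minkowski formula at level $\ell-1$ we get
\[
\int_\Sigma H_{\ell-1}\,dS=c^{-1}\int_\Sigma pH_k\,dS .
\]
Combining this with $\int H_k=\int pH_{k+1}$ relates $\int pH_{k+1}$ to $\int pH_k$.

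The core argument, in the simplest case $\ell=k+1$, goes as follows. Here $H_{k+1}=\gamma H_k$, so
\[
\int_\Sigma H_k=\int_\Sigma pH_{k+1}=\gamma\int_\Sigma pH_k .
\]
Using the Minkowski formula at level $k-1$ when $k\ge1$, or $\int p\,dS=d\,\vol_d(K)$ when $k=0$, together with $H_{k}/H_{k-1}\ge H_{k+1}/H_k=\gamma$, we obtain
\[
\int_\Sigma H_{k-1}=\int_\Sigma pH_k=\gamma^{-1}\int_\Sigma H_k\ge\int_\Sigma H_{k-1}.
\]
This forces equality in $H_k\ge \gamma H_{k-1}$ everywhere, hence umbilicity. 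The case $k=0$ needs a separate treatment, because $H_1=\gamma$ is the constant mean curvature case. We handle it by using $\int H_1 p=\int 1$ and the Minkowski formula $\int H_1=\int pH_2$, together with $H_1^2\ge H_2$ and $p>0$. This gives $\int_\Sigma p(H_1^2-H_2)\,dS=\gamma\int H_1\cdot\ldots$; the intended route is the standard Hsiung/Alexandrov argument showing $\int p(H_1^2-H_2)=0$.

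For general $k<\ell$ we write the constant ratio as $H_\ell-c^{-1}H_k=0$ and apply the same scheme. We integrate against $p$ and use the Minkowski formulas to shift indices down by one, giving
\[
\int_\Sigma (H_{\ell-1}-c^{-1}H_{k-1})\,dS=\int_\Sigma p\,(H_\ell-c^{-1}H_k)\,dS=0 ,
\]
with the convention $\int H_{-1}:=\int p$ to handle $k=0$. Then we compare pointwise. By the monotonicity of the Newton ratios, $H_{\ell-1}H_k\ge H_\ell H_{k-1}$, with equality only at umbilic points. Hence $H_{\ell-1}\ge c^{-1}H_{k-1}$ pointwise, since $H_\ell/H_k=c^{-1}$. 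The integrand is therefore nonnegative with integral zero, so equality holds everywhere, and every point is umbilic.

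The main obstacle we expect is the case $k=0$. There the index shift produces $\int p$ rather than a curvature integral, and the pointwise comparison must be replaced by a different argument. We would first try the inequality $H_{\ell-1}\ge H_\ell^{(\ell-1)/\ell}$ together with the Minkowski identity $\int(H_{\ell-1}-pH_\ell)=0$ and the relation $H_\ell=c^{-1}$, following Ros' argument for constant higher mean curvature. Once total umbilicity is established, the standard fact that a connected totally umbilic compact hypersurface is a round sphere finishes the proof.
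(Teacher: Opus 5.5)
Your integral-formula route is genuinely different from the paper's. For $k\ge 1$ it is complete: integrating $H_\ell=c^{-1}H_k$ against $p$ and shifting down by the Minkowski formulas gives $\int_\Sigma (H_{\ell-1}-c^{-1}H_{k-1})\,dS=0$. Monotonicity of $H_{m+1}/H_m$ gives $H_{\ell-1}\ge c^{-1}H_{k-1}$ pointwise. Pointwise equality then forces $H_k^2=H_{k-1}H_{k+1}$, hence umbilicity because all $\lambda_i>0$. This step does not even use $p>0$.

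The case $k=0$, however, is left as a plan, and as written it does not close.
\begin{itemize}
\item For $\ell=1$ you only assert $\int_\Sigma p(H_1^2-H_2)\,dS=0$. It follows in two lines from the Minkowski formulas at levels $0$ and $1$: $\int pH_1^2=\gamma\int pH_1=\gamma\vol_{d-1}(\Sigma)$ and $\int pH_2=\int H_1=\gamma\vol_{d-1}(\Sigma)$. Then $p>0$ and $H_1^2\ge H_2$ finish (for $d\ge 3$; $d=2$ is trivial).
\item For $\ell\ge 2$, the ingredients you list ($H_{\ell-1}\ge H_\ell^{(\ell-1)/\ell}$, $\int H_{\ell-1}=\int pH_\ell$, $H_\ell\equiv c^{-1}$) give only the lower bound $\int_\Sigma p\,dS\ge c^{1/\ell}\vol_{d-1}(\Sigma)$.
\end{itemize}
The missing upper bound comes from the level-$0$ formula: $\vol_{d-1}(\Sigma)=\int pH_1\,dS\ge c^{-1/\ell}\int p\,dS$, using $p>0$ and $H_1\ge H_\ell^{1/\ell}$. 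In Ros' embedded setting the Heintze--Karcher inequality plays this role. Then $\int p\,(H_1-H_\ell^{1/\ell})\,dS=0$, so $H_1\equiv H_\ell^{1/\ell}$, which forces umbilicity for $\ell\ge 2$. This is not a side case. The paper applies the lemma with $k=d-j_2$, so $k=0$ arises whenever volume is one of the two functionals, e.g.\ $j_1=1$, $j_2=d$ in the dual random theorem.

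As for the comparison, the paper rewrites the hypothesis as $\sigma_\ell/\sigma_k\equiv\mathrm{const}$ and asserts that this Weingarten function is elliptic on the positive cone. It then invokes Korevaar's Alexandrov-reflection sphere theorem (Montiel--Ros for $k=0$), and beyond that reduction relies entirely on citations. Your route is self-contained and elementary: no maximum principle and no ellipticity check, only Newton--Maclaurin and the Minkowski formulas. At $C^2$ regularity you should indeed take those formulas from the symmetry of mixed volumes, as you indicate, rather than from the Newton tensor, whose divergence-freeness uses Codazzi and hence third derivatives. The reflection method buys generality (embedded non-convex hypersurfaces, arbitrary elliptic $F$) that is not needed here.

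One further simplification is available to you. Once umbilicity is known, $H_\ell/H_k=\lambda^{\ell-k}$ is constant, so $\lambda$ is constant and $\nu-\lambda x$ is constant on $\Sigma$. Hence $\Sigma$ is a sphere of radius $1/\lambda$, without differentiating $\lambda$, which is only continuous at $C^2$.
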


\begin{proof}
Write $n:=d-1$. Since $H_m=\binom{n}{m}^{-1}\sigma_m$, the hypothesis is equivalent to
\[
\frac{\sigma_\ell(\lambda(x))}{\sigma_k(\lambda(x))}=\text{constant}\qquad\text{on }\Sigma.
\]
Because $\Sigma$ is strictly convex, $\lambda(x)\in\Gamma_n^+:=\{\lambda\in\mathbb R^n:\lambda_i>0\}$,
so $\sigma_k,\sigma_\ell>0$ and the quotient defines a smooth symmetric curvature function $F(\lambda):=\frac{\sigma_\ell(\lambda)}{\sigma_k(\lambda)}$ on $\Gamma_n^+$.  It is standard that $F$ is elliptic on $\Gamma_n^+$ (i.e., $\partial F/\partial\lambda_i>0$) since $\partial_i\log\sigma_m>0$ on $\Gamma_n^+$, hence $\partial_i(\sigma_\ell/\sigma_k)>0$ and we use the monotonicity of $\partial_i\log\sigma_m$ on $\Gamma_n^+$. 
Thus the equation $F(\lambda(x))\equiv \text{constant}$ is an elliptic Weingarten equation
on the strictly convex hypersurface $\Sigma$.

By the Alexandrov-type sphere theorem for compact embedded elliptic Weingarten hypersurfaces (see Korevaar \cite{Korevaar1988}),
a compact embedded strictly convex $C^2$ hypersurface in $\mathbb R^{n+1}$ satisfying
$F(\lambda)\equiv \mathrm{constant}$ must be a round sphere. Therefore, $\Sigma$ is totally umbilic. 

In the special case $k=0$ (so $H_0\equiv1$), the condition reduces to $\sigma_\ell\equiv\mathrm{constant}$,
which is covered by the classical Alexandrov theorem for constant higher-order mean curvature (see  \cite{MontielRos}).
\end{proof}

\begin{proof}[Proof of Theorem \ref{thm:random-rigidity}]
Fix $j_1,j_2\in\{1,\ldots,d\}$ with $j_1<j_2$, and suppose that $\rho$ satisfies \eqref{eq:random-asymp-best-correct} for $i=1,2$. First, we show that $\rho$ minimizes $\mathcal{I}_{j_i}$. From \eqref{eq:random-asymp-correct} we have, for each fixed $j$,
\[
    \E[\Delta_j(K,P_N(\rho))]
    \sim
    c_{d,j}\mathcal{I}_j(\rho)\NN,
\]
and
\[
    \inf_{\eta\in\mathcal{P}(\partial K)}\E[\Delta_j(K,P_N(\eta))]
    \sim
    c_{d,j}\NN
    \min_{\eta\in\mathcal{P}(\partial K)}\mathcal{I}_j(\eta).
\]
By Theorem~\ref{thm:random-asymp-correct}, for each fixed $j$ we can write
\[
\E[\Delta_j(K,P_N(\rho))] = c_{d,j}\mathcal{I}_j(\rho)\NN + o(\NN),
\]
and similarly
\[
\inf_{\eta\in\mathcal{P}(\partial K)}\E[\Delta_j(K,P_N(\eta))]
=
c_{d,j}\NN\min_{\eta\in\mathcal{P}(\partial K)}\mathcal{I}_j(\eta) + o(\NN).
\]
Since $w_j(x)>0$ on $\partial K$ and $\rho>0$ is a probability density,
the integral defining $\mathcal I_j(\rho)$ is finite and strictly positive. In particular
\[
\min_{\eta\in\mathcal P(\partial K)}\mathcal I_j(\eta)>0,
\]
so the denominator has a nonzero leading term. 
Note that $\mathcal I_j(\eta)\in(0,\infty]$ for every probability density $\eta$. 
Moreover, the minimum of $\mathcal I_j$ over all densities is strictly positive.
Indeed, applying Lemma~\ref{lem:holder-optimal-density} with
$a=\frac{2}{d-1}$ and $w=w_j$ gives, for every density $\eta$,
\[
\mathcal I_j(\eta)\;\ge\;
\left(\int_{\partial K} w_j(x)^{\frac{d-1}{d+1}}\,dS(x)\right)^{\frac{d+1}{d-1}}
\;>\;0,
\]
since $w_j(x)=\kappa(x)^{\frac{1}{d-1}}H_{d-j}(x)$ is continuous and positive on
$\partial K$.

Since $\mathcal I_j(\eta)\ge c_0>0$ for all densities $\eta$,
both the numerator and the denominator in
\eqref{eq:random-asymp-best-correct} admit expansions of the form 
$c_{d,j}\mathcal I_j(\cdot)\NN + o(\NN)$ with nonzero leading
coefficients. Hence the ratio converges to the ratio of leading terms. (More specifically, we are using the fact that if $a_N=AN^{-\alpha}+o(N^{-\alpha})$ and $b_N=BN^{-\alpha}+o(N^{-\alpha})$ with $B>0$, then $a_N/b_N\to A/B$.) Since, by Lemma~\ref{lem:I_j-convex},   $\mathcal{I}_{j_i}$ is strictly convex and has a unique minimizer
$\rho_{j_i}^{\mathrm{opt}}$, it follows that
\[
    \mathcal{I}_{j_i}(\rho) = \min_{\eta\in\mathcal{P}(\partial K)}\mathcal{I}_{j_i}(\eta).
\]
Therefore,
\begin{equation}\label{eq:rho-equal-both}
    \rho = \rho_{j_1}^{\mathrm{opt}} = \rho_{j_2}^{\mathrm{opt}}.
\end{equation}

From \eqref{eq:rho-j-opt-correct} and \eqref{eq:rho-equal-both}, we derive that
for some constant $c(K)>0$,
\[
    w_{j_1}(x)^{\frac{d-1}{d+1}}
    =
    c(K)w_{j_2}(x)^{\frac{d-1}{d+1}}
    \qquad\text{for $dS$-a.e.\ }x\in\partial K.
\]
Since $\frac{d-1}{d+1}>0$, this is equivalent to
\[
    w_{j_1}(x) = c_1(K)w_{j_2}(x)
    \qquad\text{for $dS$-a.e.\ }x\in\partial K,
\]
for some constant $c_1(K)>0$. Recalling that
$w_j(x)=\kappa(x)^{1/(d-1)}H_{d-j}(x)$, we get
\[
    \kappa(x)^{\frac{1}{d-1}}H_{d-j_1}(x)
    =
    c_1(K)
    \kappa(x)^{\frac{1}{d-1}}H_{d-j_2}(x),
\]
and hence
\begin{equation}\label{eq:H-rel}
    H_{d-j_1}(x)
    = c_2(K)H_{d-j_2}(x)
    \qquad\text{for $dS$-a.e.\ }x\in\partial K
\end{equation}
for some constant $c_2(K)>0$. Since $\partial K$ is $C^2$ and strictly convex, the principal curvatures (and
hence each $H_m(x)$) are continuous on $\partial K$. Therefore the relation
\eqref{eq:H-rel}, which holds $dS$-a.e., holds everywhere on $\partial K$.

By \eqref{eq:H-rel} and Lemma~\ref{lem:symm-proportional}, $\partial K$ is totally umbilic, so $\kappa$ is constant on $\partial K$. As before, since $\partial K$ is compact, strictly convex, and $C^2$, Aleksandrov's
uniqueness theorem for closed convex $C^2$ hypersurfaces with constant Gauss
curvature implies that $\partial K$ is a Euclidean sphere up to translation (see
\cite{aleksandrov-1962}).
\end{proof}

\section{Dual rigidity for circumscribed polytopes}
\label{sec:circumscribed}

In this section, we formulate and prove dual analogues of our rigidity results
for polytopes circumscribed about a $C_+^2$ convex body with a restricted number of facets. In both the deterministic and probabilistic settings, we work solely with the volume and mean width differences, transporting the inscribed results to the circumscribed model via polarity.

Throughout this section, let $K\subset\mathbb{R}^d$ be a $C_+^2$ convex body with
$0\in\operatorname{int}K$. We write $\partial K$ for its boundary, $dS$ for
the surface area measure, and $\kappa(x)>0$ for the Gauss curvature at
$x\in\partial K$. Denote by $\nu_K(x)\in\mathbb{S}^{d-1}$ the outer unit normal
of $K$ at $x$.

\subsection{Polarity and circumscribed vs.~inscribed approximation}

For a convex body $K$ containing the origin in its interior, the polar body
\[
   K^\circ := \{y\in\R^d : \langle x,y\rangle\le 1\ \text{for all }x\in K\}
\]
is again a $C_+^2$ convex body, and the polarity map $K\mapsto K^\circ$ is an involution on the class of such bodies. Polarity reverses inclusions and interchanges vertices and facets: if $K\subset Q$ and $Q$ is a polytope, then $Q^\circ$ is a polytope with $Q^\circ\subset K^\circ$, and there is a one-to-one correspondence between facets $F$ of $Q$ and vertices $v_F$ of $Q^\circ$.

More precisely, suppose $Q\subset\R^d$ is a polytope circumscribed about $K$ and $F$ is a facet of $Q$ with outer unit normal $u_F\in\Sp$ and supporting hyperplane
\[
   H_F:=\{x\in\R^d:\langle x,u_F\rangle = h_K(u_F)\}.
\]
Then $F=Q\cap H_F$, and there is a unique touching point $x_F\in\partial K\cap H_F$. Under polarity, the facet $F$ corresponds to the vertex
\[
   v_F := \frac{u_F}{h_K(u_F)}\in Q^\circ,
\]
and $Q^\circ$ is a polytope inscribed in $K^\circ$ with vertex set $\{v_F : F\ \text{facet of }Q\}$. For more background on polar duality, we refer the reader to \cite{SchneiderBook}.

This duality allows one to transfer local approximation problems for circumscribed polytopes around $K$ into local approximation problems for inscribed polytopes in $K^\circ$. On the level of asymptotic quantization, this is exploited in the works of Gruber \cite{Gruber2004}  and Glasauer and Gruber \cite{glasgrub}; we will use their results to deduce the existence and structure of curvature-weighted functionals governing best circumscribed approximation, without needing explicit closed formulas on $\partial K$.

\subsection{Deterministic circumscribed approximation: setup and functionals}

For each $N\in\mathbb{N}$ with $N\geq d+1$, we set
\[
   \mathscr{Q}_N(K)
   :=
   \bigl\{
      Q\supset\mathbb{R}^d:
      \, 
      Q\text{ a polytope with at most }N\text{ facets}
   \bigr\}.
\]
Note that for $Q\in\mathscr{Q}_N(K)$, the volume and mean width differences are
\[
   \delta_V(K,Q) = \vol_d(Q)-\vol_d(K)
   \quad \text{and}\quad 
   \delta_W(K,Q) = w(Q)-w(K).
\]

For each circumscribed polytope $Q\in\mathscr{Q}_N(K)$, we associate a
probability measure $\nu_Q$ on $\partial K$ as follows: for each facet
$F\in\mathcal{F}_{d-1}(Q)$, let $x_F\in\partial K$ be the unique point at
which the supporting hyperplane of $F$ touches $K$. Then we set
\[
   \nu_Q := \frac{1}{f_{d-1}(Q)}\sum_{F\in\mathcal{F}_{d-1}(Q)}\delta_{x_F},
\]
and call $\nu_Q$ the \emph{facet-touch distribution} of $Q$. Again, let $\mathcal{P}(\partial K)$ denote the space of Borel probability measures on
$\partial K$, equipped with the weak topology.

The local analysis in \cite{glasgrub,Gruber93,Gruber2004}, combined with polarity, shows that the sharp $\NN$ asymptotic rate for best circumscribed approximation of $K$ can be written in terms of strictly convex functionals of the facet-touch distribution $\nu_Q$, with explicit curvature weights that are dual to those in the inscribed case for $K^\circ$. For our purposes, we only need the following  statement, which  is a direct consequence of polarity combined with
the inscribed asymptotics of \cite{glasgrub,Gruber2004}.

\begin{theorem}[Best circumscribed volume and mean width approximation]\label{thm:outer-asymp}
Let $K\subset\mathbb{R}^d$ be a $C_+^2$ convex body with $0\in\interior K$. Then there exist strictly convex functionals
\[
   \widetilde{\mathcal{J}}_V,\widetilde{\mathcal{J}}_W:\mathcal{P}(\partial K)\to(0,\infty]
\]
and constants $\divv_{d-1},\dell_{d-1}>0$ (depending only on $d$) with the following properties:
\begin{enumerate}
\item[(i)] \emph{(Outer liminf inequality for volume)} For any sequence $\{Q_N\}$ with $Q_N\in\mathscr{Q}_N(K)$ and $\nu_{Q_N}\rightharpoonup\nu\in\mathcal{P}(\partial K)$,
\begin{equation}\label{eq:JtildeV-liminf}
   \liminf_{N\to\infty}\NNN\,\delta_V(K,Q_N)
   \;\ge\; \frac{\divv_{d-1}}{2}\cdot\widetilde{\mathcal{J}}_V(\nu).
\end{equation}
Moreover, if $\{Q_N\}$ is asymptotically best for volume in the sense that
\[
\frac{\delta_V(K,Q_N)}{\inf\{\delta_V(K,Q):Q\in\mathscr Q_N(K)\}}\to 1,
\]
and $\nu_{Q_N}\rightharpoonup \nu$, then
\begin{equation}\label{eq:JtildeV-limit}
   \lim_{N\to\infty}\NNN\,\delta_V(K,Q_N)
   \;=\; \frac{\divv_{d-1}}{2}\cdot\widetilde{\mathcal{J}}_V(\nu),
\end{equation}
and necessarily $\nu=\nu_V^{\mathrm{opt}}$ (the unique minimizer of $\widetilde{\mathcal{J}}_V$). In addition,
\begin{equation}\label{eq:outer-best-vol-asymp}
   \inf_{Q\in\mathscr{Q}_N(K)}\delta_V(K,Q)
   \sim \frac{\divv_{d-1}}{2}\cdot\NN
        \inf_{\nu\in\mathcal{P}(\partial K)}\widetilde{\mathcal{J}}_V(\nu).
\end{equation}

\item[(ii)] \emph{(Outer liminf inequality for mean width)} For any sequence $\{Q_N\}$ with $Q_N\in\mathscr{Q}_N(K)$ and $\nu_{Q_N}\rightharpoonup\nu\in\mathcal{P}(\partial K)$,
\begin{equation}\label{eq:JtildeW-liminf}
   \liminf_{N\to\infty}\NNN\,\delta_W(K,Q_N)
   \;\ge\; \frac{\dell_{d-1}}{d\vol_d(B_d)}\cdot\widetilde{\mathcal{J}}_W(\nu).
\end{equation}
Moreover, if $\{Q_N\}$ is asymptotically best for mean width in the sense that
\[
\frac{\delta_W(K,Q_N)}{\inf\{\delta_W(K,Q):Q\in\mathscr Q_N(K)\}}\to 1,
\]
and $\nu_{Q_N}\rightharpoonup \nu$, then
\begin{equation}\label{eq:JtildeW-limit}
   \lim_{N\to\infty}\NNN\,\delta_W(K,Q_N)
   \;=\; \frac{\dell_{d-1}}{d\vol_d(B_d)}\cdot\widetilde{\mathcal{J}}_W(\nu),
\end{equation}
and necessarily $\nu=\nu_W^{\mathrm{opt}}$ (the unique minimizer of $\widetilde{\mathcal{J}}_W$). In addition,
\begin{equation}\label{eq:outer-best-mw-asymp}
   \inf_{Q\in\mathscr{Q}_N(K)}\delta_W(K,Q)
   \sim \frac{\dell_{d-1}}{d\vol_d(B_d)}
        \cdot\NN\inf_{\nu\in\mathcal{P}(\partial K)}\widetilde{\mathcal{J}}_W(\nu).
\end{equation}

\item[(iii)] \emph{(Unique minimizers and polarity)} Each of
$\widetilde{\mathcal{J}}_V$ and $\widetilde{\mathcal{J}}_W$
has a unique minimizer, denoted $\nu_V^{\mathrm{opt}}$ and
$\nu_W^{\mathrm{opt}}$, respectively. If $L:=K^\circ$ is the polar body and $\mu_V^{\mathrm{opt},L}$ and $\mu_W^{\mathrm{opt},L}$ are the unique minimizers of the inscribed functionals $\mathcal{J}_V$ and $\mathcal{J}_W$ for $L$ (as in Theorems~\ref{thm:ext-vol} and \ref{thm:ext-mw}), then the minimizers correspond under the polarity map $\Phi$, i.e., 
$\nu_V^{\mathrm{opt}}$ is the image of $\mu_V^{\mathrm{opt},L}$
under the polarity correspondence between touching points and normals,
and similarly for $\nu_W^{\mathrm{opt}}$. In particular, since $\Phi$ is a bijection, the equality
$\nu_V^{\mathrm{opt}}=\nu_W^{\mathrm{opt}}$ holds if and only if
$\mu_V^{\mathrm{opt},L}=\mu_W^{\mathrm{opt},L}$.

\end{enumerate}
\end{theorem}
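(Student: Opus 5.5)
The plan is to prove all three parts by transporting the inscribed statements, Theorems~\ref{thm:ext-vol} and \ref{thm:ext-mw}, from $L:=K^\circ$ back to $K$ through the polarity map $\Phi:\partial K\to\partial L$, $\Phi(x):=\nu_K(x)/h_K(\nu_K(x))$. Since $K$ is $C^2_+$, $\Phi$ is a $C^1$ diffeomorphism with $C^1$ inverse (the same construction applied to $L$). For $Q\in\mathscr Q_N(K)$, the polar $Q^\circ$ is inscribed in $L$ and has $f_0(Q^\circ)=f_{d-1}(Q)\le N$. Its vertices are exactly $v_F=\Phi(x_F)$, so $\Phi_\#\nu_Q=\mu_{Q^\circ}$. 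By continuity of $\Phi$, weak convergence $\nu_{Q_N}\rightharpoonup\nu$ is equivalent to $\mu_{Q_N^\circ}\rightharpoonup\Phi_\#\nu$. Part (iii) then becomes a bookkeeping statement once the functionals are defined by pull-back.

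\textbf{Step 1: localize the two error functionals.} Neither $\vol_d(Q)-\vol_d(K)$ nor $w(Q)-w(K)$ is a fixed multiple of the corresponding inscribed error for $(L,Q^\circ)$. They agree only to leading order in each small cap. So I would work at the level of the local quantization description of Lemma~\ref{lem:reduction-quantization}. Near a touching point $x_F$, $Q$ coincides with the tangent hyperplane, and the excess volume over the Voronoi-type cell of $x_F$ is $\tfrac12\int(\text{squared distance in the second fundamental form metric})$ to leading order. This is Gruber's Dirichlet--Voronoi quantization, which produces the constant $\divv_{d-1}$.

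For mean width, I would use $w(Q)-w(K)=\tfrac{2}{\vol_{d-1}(\partial B_d)}\int_{\Sp}(h_Q-h_K)\,d\sigma$. Under $u\mapsto 1/h(u)$ this becomes a radial-function difference for $Q^\circ\subset L$. That is an inscribed volume-type problem on $\partial L$ with a Delone structure, which explains the appearance of $\dell_{d-1}$.

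In both cases I would apply \cite[Theorem~6]{Gruber2004} on the manifold $\partial K$, or on $\partial L$ and then pull back by $\Phi$. This gives the liminf inequalities \eqref{eq:JtildeV-liminf} and \eqref{eq:JtildeW-liminf} with functionals of the form $\widetilde{\mathcal J}(\nu)=\int_{\partial K}\widetilde w\,f^{-\frac{2}{d-1}}\,dS$, where $f=d\nu/dS$ and $\widetilde{\mathcal J}(\nu)=+\infty$ if $\nu$ is not absolutely continuous. The weights are $\widetilde w_V=\kappa^{\frac{1}{d-1}}$ and, after the change of variables $u=\nu_K(x)$ with $d\sigma=\kappa\,dS$, $\widetilde w_W=\kappa^{\frac{d}{d-1}}$.

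\textbf{Step 2: minimizers and asymptotics.} Strict convexity of $\widetilde{\mathcal J}_V$ and $\widetilde{\mathcal J}_W$ follows exactly as in Lemma~\ref{lem:I_j-convex}. Existence and uniqueness of the minimizers $\nu_V^{\rm opt}$ and $\nu_W^{\rm opt}$ then follow from Lemma~\ref{lem:holder-optimal-density}.

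For the asymptotic formulas \eqref{eq:outer-best-vol-asymp} and \eqref{eq:outer-best-mw-asymp}, the matching upper bound comes from Gruber's construction: take a near-optimal Dirichlet--Voronoi (respectively Delone) configuration with density $f^{\rm opt}$ and intersect the corresponding supporting halfspaces. I would then reproduce Lemma~\ref{lem:f0-asymp-N} with facets in place of vertices, to get $f_{d-1}(Q_N)/N\to1$ for asymptotically best sequences. With that in hand, the subsequence and weak-compactness argument of Lemmas~\ref{lem:vol-limit-asymp-det} and \ref{lem:mw-limit-asymp-det} gives \eqref{eq:JtildeV-limit} and \eqref{eq:JtildeW-limit}, together with the conclusion that any limit measure $\nu$ equals the unique minimizer.

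\textbf{Step 3: the polarity correspondence in (iii).} Here I would verify that $\Phi_\#\nu_V^{\rm opt}=\mu_V^{\mathrm{opt},L}$ and $\Phi_\#\nu_W^{\rm opt}=\mu_W^{\mathrm{opt},L}$. The inputs are the classical identities for the Jacobian of $\Phi$ and for $\kappa_L(\Phi(x))$ in terms of $\kappa(x)$ and $h_K(\nu_K(x))$, the latter appearing via powers $h^{d+1}$. A model case is the polar invariance of affine surface area, $\int_{\partial K}\kappa^{\frac{1}{d+1}}dS$ versus $\int_{\partial L}\kappa_L^{\frac{1}{d+1}}dS_L$ up to $h$-weights.

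If the weights do not match on the nose, I would instead \emph{define} $\widetilde{\mathcal J}_\bullet(\nu):=c\,\mathcal J^L_\bullet(\Phi_\#\nu)$ up to the correct local density factor, and absorb any residual factor into the localized error comparison of Step 1. Once the functionals are pulled back this way, the equivalence $\nu_V^{\rm opt}=\nu_W^{\rm opt}\iff\mu_V^{\mathrm{opt},L}=\mu_W^{\mathrm{opt},L}$ is immediate, because $\Phi_\#$ is a bijection on $\mathcal P(\partial K)$.

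I expect Step 1 to be the main obstacle. It requires checking that the circumscribed cap errors fit the hypotheses of Gruber's manifold quantization theorem, including the uniform distortion bounds \eqref{eq:distortion-local} and the lower semicontinuity along arbitrary weakly convergent facet-touch measures. It also requires getting the constants $\divv_{d-1}$ and $\dell_{d-1}$ assigned correctly, with the roles swapped relative to the inscribed case.
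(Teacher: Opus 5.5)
\paragraph{Where the proposal breaks.} The gap is in Step 3, and it cannot be closed. The correspondence asserted in (iii), with the unweighted inscribed functionals $\mathcal J_V,\mathcal J_W$ of $L$, does not hold in general. Carry out the computation you propose, writing $h=\langle x,\nu_K(x)\rangle$. The two identities you need are
\[
\kappa_K(x)\,\kappa_L(\Phi(x))=(h/|x|)^{d+1}
\quad\text{and}\quad
\Phi^*dS_L=|x|\,h^{-d}\kappa_K\,dS_K ,
\]
the latter obtained by composing the Gauss map of $K$ with the radial map $u\mapsto u/h_K(u)$ onto $\partial L$. Substituting gives
\[
\kappa_L^{\frac{1}{d+1}}\,dS_L\ \text{corresponds to}\ h^{-(d-1)}\kappa_K^{\frac{d}{d+1}}\,dS_K,
\qquad
\kappa_L^{\frac{d}{d+1}}\,dS_L\ \text{corresponds to}\ |x|^{-(d-1)}\kappa_K^{\frac{1}{d+1}}\,dS_K .
\]
Your Steps 1--2 correctly give $\nu_V^{\mathrm{opt}}\propto\kappa_K^{1/(d+1)}dS_K$ and $\nu_W^{\mathrm{opt}}\propto\kappa_K^{d/(d+1)}dS_K$. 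So polarity swaps the curvature exponents and also introduces non-constant factors. Consequently $\Phi_\#\nu_V^{\mathrm{opt}}=\mu_V^{\mathrm{opt},L}$ would force $\kappa_K\propto h^{d+1}$, and $\Phi_\#\nu_W^{\mathrm{opt}}=\mu_W^{\mathrm{opt},L}$ would force $\kappa_K\propto|x|^{-(d+1)}$.

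This is consistent with your own Step 1 remark that circumscribed mean width for $K$ becomes a weighted inscribed volume-type problem for $L$. Likewise $\vol_d(Q)-\vol_d(K)=\frac1d\int_{\Sp}(h_{Q^\circ}^{-d}-h_L^{-d})\,d\sigma$ is, to first order, an $h_L^{-(d+1)}$-weighted mean-width deviation of $Q^\circ\subset L$. Your model case is off for the same reason: affine surface area is not polar invariant. One has $\int_{\partial L}\kappa_L^{1/(d+1)}dS_L=\int_{\partial K}h^{-(d-1)}\kappa_K^{d/(d+1)}dS_K$, so the curvature exponent changes, not merely an $h$-weight.

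The final equivalence in (iii) fails as well. The circumscribed problems are translation invariant, so for every ball $K$, uniqueness plus rotation invariance give $\nu_V^{\mathrm{opt}}=\nu_W^{\mathrm{opt}}=$ normalized surface measure. But if the ball's center is not the origin, $L=K^\circ$ is a non-spherical ellipsoid, and then $\mu_V^{\mathrm{opt},L}\neq\mu_W^{\mathrm{opt},L}$.

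\paragraph{Why the fallback fails.} Redefining $\widetilde{\mathcal J}_\bullet$ as a pull-back of $\mathcal J^L_\bullet$ and absorbing the residual factor is not available. Parts (i)--(ii) pin down the minimizer. The liminf inequality, the limit identity for asymptotically best sequences, and the asymptotic formula for the infimum together force the unique minimizer of $\widetilde{\mathcal J}_V$ to be the weak limit of facet-touch measures of asymptotically best sequences. By your own Step 1, that limit is $\propto\kappa_K^{1/(d+1)}dS_K$. A pointwise factor such as $h^{-(d-1)}$, or a change of exponent from $\frac{1}{d+1}$ to $\frac{d}{d+1}$, moves the minimizer. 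Neither can be absorbed into the constant $c$ or into lower-order local errors.

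\paragraph{What survives.} Your Steps 1--2 can deliver (i), (ii) and the uniqueness part of (iii). They follow the route the paper indicates; the paper only asserts that the theorem follows from polarity together with Gruber and Glasauer--Gruber, with no computation for (iii), so it does not supply the missing step either. A correct polarity clause would relate $\nu_V^{\mathrm{opt}}$ and $\nu_W^{\mathrm{opt}}$ to \emph{weighted} inscribed mean-width and volume problems on $L$, respectively. For the later rigidity application you do not need polarity at all: $\kappa_K^{1/(d+1)}\propto\kappa_K^{d/(d+1)}$ already forces $\kappa_K$ to be constant.
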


\begin{remark}
The existence of such functionals and their strict convexity follow from the general quantization theory in \cite{Gruber2004} combined with the local duality between inscribed and circumscribed approximation established in \cite{glasgrub,Gruber93}. For volume and mean width, the relevant curvature weights are obtained explicitly in \cite{glasgrub} for inscribed approximation and then transported to the circumscribed side by polarity; we do not need their explicit formulas on $\partial K$ here, only the fact that the minimizers are unique and related to the inscribed minimizers for $K^\circ$.
\end{remark}

\subsection{Asymptotically best circumscribed sequences and limiting facet distributions}

We now introduce the concept of asymptotic optimality for the circumscribed volume and mean width differences.

\begin{definition}[Asymptotically best circumscribed polytopes]
\label{def:asymp-best-outer-det}
Let $K\subset\mathbb{R}^d$ be a $C_+^2$ convex body with $0\in\operatorname{int}(K)$.

\begin{enumerate}
\item[(i)] A sequence $\{Q_N\}$ with $Q_N\in\mathscr{Q}_N(K)$ is called
\emph{asymptotically best for volume} if
\begin{equation}\label{eq:asymp-best-outer-V}
   \frac{\delta_V(K,Q_N)}{
      \displaystyle\inf\bigl\{\delta_V(K,Q):Q\in\mathscr{Q}_N(K)\bigr\}}
   \;\longrightarrow\;1
   \qquad\text{as }N\to\infty.
\end{equation}

\item[(ii)] A sequence $\{Q_N\}$ with $Q_N\in\mathscr{Q}_N(K)$ is called
\emph{asymptotically best for mean width} if
\begin{equation}\label{eq:asymp-best-outer-W}
   \frac{\delta_W(K,Q_N)}{
      \displaystyle\inf\bigl\{\delta_W(K,Q):Q\in\mathscr{Q}_N(K)\bigr\}}
   \;\longrightarrow\;1
   \qquad\text{as }N\to\infty.
\end{equation}

\item[(iii)] We say that $\{Q_N\}$ is \emph{simultaneously asymptotically best}
if \eqref{eq:asymp-best-outer-V} and \eqref{eq:asymp-best-outer-W} both hold.
\end{enumerate}
\end{definition}

We first identify the limiting facet-touch distributions of asymptotically best
sequences using Theorem~\ref{thm:outer-asymp}.

\begin{lemma}[Limiting distributions for asymptotically best circumscribed polytopes]
\label{lem:outer-limit-asymp-VW}
Let $K\subset\R^d$ be a $C_+^2$ convex body with $0\in\operatorname{int}(K)$.

\begin{enumerate}
\item[(i)] If $\{Q_N\}$ is asymptotically best for volume, then
\[
   \nu_{Q_N} \rightharpoonup \nu_V^{\mathrm{opt}}
   \qquad\text{in }\mathcal{P}(\partial K).
\]

\item[(ii)] If $\{Q_N\}$ is asymptotically best for mean width, then
\[
   \nu_{Q_N} \rightharpoonup \nu_W^{\mathrm{opt}}
   \qquad\text{in }\mathcal{P}(\partial K).
\]
\end{enumerate}
\end{lemma}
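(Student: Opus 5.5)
The plan is to copy the subsequence-compactness argument of Lemmas~\ref{lem:vol-limit-asymp-det} and \ref{lem:mw-limit-asymp-det}, replacing the inscribed asymptotics by Theorem~\ref{thm:outer-asymp}. We do~(i) in detail; (ii) is identical with $\widetilde{\mathcal J}_W$, the constant $\frac{\dell_{d-1}}{d\vol_d(B_d)}$, and \eqref{eq:JtildeW-liminf}, \eqref{eq:outer-best-mw-asymp} in place of $\widetilde{\mathcal J}_V$, $\frac{\divv_{d-1}}{2}$, \eqref{eq:JtildeV-liminf}, \eqref{eq:outer-best-vol-asymp}.

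\emph{Step 1 (convergence of the scaled errors).} Set $\tilde m_N:=\inf\{\delta_V(K,Q):Q\in\mathscr Q_N(K)\}$. By \eqref{eq:outer-best-vol-asymp},
\[
\NNN\tilde m_N\to\frac{\divv_{d-1}}{2}\inf_{\nu\in\mathcal P(\partial K)}\widetilde{\mathcal J}_V(\nu)=:A .
\]
Here $A<\infty$, since the left-hand side is finite: circumscribed polytopes with $N$ facets exist for $N\ge d+1$. The hypothesis \eqref{eq:asymp-best-outer-V} then gives $\NNN\delta_V(K,Q_N)\to A$.

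\emph{Step 2 (compactness and identification of limits).} Since $\partial K$ is compact, $\mathcal P(\partial K)$ is weakly compact and metrizable. Hence every subsequence of $\{\nu_{Q_N}\}$ has a further subsequence $\nu_{Q_{N_k}}\rightharpoonup\nu$. Apply the liminf inequality \eqref{eq:JtildeV-liminf} along $N_k$ (reindexing if necessary, so that the inequality is used along a subsequence). Combined with Step 1 this yields
\[
\tfrac{\divv_{d-1}}{2}\widetilde{\mathcal J}_V(\nu)\le A .
\]
Consequently $\widetilde{\mathcal J}_V(\nu)<\infty$ and $\widetilde{\mathcal J}_V(\nu)=\inf\widetilde{\mathcal J}_V$. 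By uniqueness of the minimizer (Theorem~\ref{thm:outer-asymp}(iii)), $\nu=\nu_V^{\mathrm{opt}}$. Alternatively, one can invoke directly the clause in Theorem~\ref{thm:outer-asymp}(i) which says that for asymptotically best sequences with $\nu_{Q_N}\rightharpoonup\nu$ one has $\nu=\nu_V^{\mathrm{opt}}$. A subsequence of an asymptotically best sequence, reindexed, is still asymptotically best in the sense needed: the ratio to $\tilde m_{N_k}$ still tends to $1$.

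\emph{Step 3 (full convergence).} Every subsequence has a further subsequence converging to the same limit $\nu_V^{\mathrm{opt}}$, and the space is metrizable, so $\nu_{Q_N}\rightharpoonup\nu_V^{\mathrm{opt}}$.

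\emph{Main obstacle.} The delicate point is the subsequence issue in Step~2. Theorem~\ref{thm:outer-asymp} is stated for full sequences indexed by $N$, with $Q_N\in\mathscr Q_N(K)$. To apply it along $N_k$, I would either note that its liminf inequality is a statement about $\liminf$, which only increases when passing to subsequences, or fill the gaps: for $N\notin\{N_k\}$, choose $Q_N$ asymptotically optimal with touching distributions converging to $\nu$. The latter is less clean, so I prefer the first route, treating \eqref{eq:JtildeV-liminf} as valid along any subsequence, exactly as was done implicitly in Lemma~\ref{lem:vol-limit-asymp-det}.
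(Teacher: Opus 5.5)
Your proposal is correct and is essentially the paper's proof: the scaled optimal error from \eqref{eq:outer-best-vol-asymp}, the liminf inequality \eqref{eq:JtildeV-liminf} applied along a weakly convergent subsequence (which the paper also does without comment), uniqueness of the minimizer from Theorem~\ref{thm:outer-asymp}(iii), and the sub-subsequence argument in the compact metrizable space $\mathcal P(\partial K)$.

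Your subsequence worry has a simple fix that needs no optimality. Pad by setting $Q_N:=Q_{N_k}$ for $N_k\le N<N_{k+1}$; this is admissible since $\mathscr Q_{N_k}(K)\subset\mathscr Q_N(K)$. Then $\nu_{Q_N}\rightharpoonup\nu$ along the full sequence, and the liminf along $\{N_k\}$ dominates the full liminf. Note also that $A<\infty$ is part of the content of \eqref{eq:outer-best-vol-asymp}; it does not follow merely from each $\tilde m_N$ being finite.
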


\begin{proof}
We only prove (i) as the mean width case (ii) is identical with $V$ replaced by $W$. Define the optimal outer volume error
\[
   m_N^V \;:=\; \inf\bigl\{\delta_V(K,Q):\,Q\in\mathscr Q_N(K)\bigr\}.
\]
By \eqref{eq:outer-best-vol-asymp}, we have 
\[
   m_N^V
   \sim
   \frac{\divv_{d-1}}{2}\cdot\NN\,
   \inf_{\nu\in\mathcal P(\partial K)}\widetilde{\mathcal J}_V(\nu).
\]
Equivalently,
\begin{equation}\label{eq:mNv-scaled-limit}
   \NNN\,m_N^V
   \longrightarrow
   \frac{\divv_{d-1}}{2}\cdot
   \inf_{\nu\in\mathcal P(\partial K)}\widetilde{\mathcal J}_V(\nu)
   \qquad\text{as }N\to\infty.
\end{equation}

Now assume $\{Q_N\}$ is asymptotically best for volume, i.e.,
\[
   \frac{\delta_V(K,Q_N)}{m_N^V}\longrightarrow 1.
\]
Multiplying this ratio convergence by \eqref{eq:mNv-scaled-limit} yields
\begin{equation}\label{eq:scaled-error-converges}
   \NNN\,\delta_V(K,Q_N)
   \;=\;
   \Bigl(\frac{\delta_V(K,Q_N)}{m_N^V}\Bigr)\cdot(\NNN\,m_N^V)
   \longrightarrow
   \frac{\divv_{d-1}}{2}\cdot
   \inf_{\nu\in\mathcal P(\partial K)}\widetilde{\mathcal J}_V(\nu).
\end{equation}
In particular, $\{\NNN\,\delta_V(K,Q_N)\}$ is bounded.

Since $\partial K$ is compact, $\mathcal P(\partial K)$ is weakly compact, so
the sequence of facet-touch distributions $\{\nu_{Q_N}\}$ has a weakly convergent
subsequence. Let $\nu_{Q_{N_k}}\rightharpoonup \nu$ be any such subsequence.
Applying the liminf inequality \eqref{eq:JtildeV-liminf} along this subsequence, we get
\[
   \liminf_{k\to\infty}\NNN\,\delta_V(K,Q_{N_k})
   \;\ge\;
   \frac{\divv_{d-1}}{2}\cdot\widetilde{\mathcal J}_V(\nu).
\]
On the other hand, the full sequence $\NNN\,\delta_V(K,Q_N)$ converges by
\eqref{eq:scaled-error-converges}, so the subsequence has the same limit:
\[
   \lim_{k\to\infty}\NNN\,\delta_V(K,Q_{N_k})
   =
   \frac{\divv_{d-1}}{2}\,
   \inf_{\nu'\in\mathcal P(\partial K)}\widetilde{\mathcal J}_V(\nu').
\]
Combining the previous two statements yields
\[
   \frac{\divv_{d-1}}{2}\,
   \inf_{\nu'}\widetilde{\mathcal J}_V(\nu')
   \;\ge\;
   \frac{\divv_{d-1}}{2}\,\widetilde{\mathcal J}_V(\nu),
\]
and therefore
\[
   \widetilde{\mathcal J}_V(\nu)
   \le
   \inf_{\nu'\in\mathcal P(\partial K)}\widetilde{\mathcal J}_V(\nu').
\]
By definition of the infimum, this forces
\[
   \widetilde{\mathcal J}_V(\nu)
   =
   \inf_{\nu'\in\mathcal P(\partial K)}\widetilde{\mathcal J}_V(\nu'),
\]
so $\nu$ is a minimizer of $\widetilde{\mathcal J}_V$.

By Theorem~\ref{thm:outer-asymp}(iii), the minimizer is unique and equals
$\nu_V^{\mathrm{opt}}$. Hence every weakly convergent subsequence of
$\{\nu_{Q_N}\}$ converges to $\nu_V^{\mathrm{opt}}$. Since $\mathcal P(\partial K)$
is compact metrizable in the weak topology (because $\partial K$ is compact
metric), it follows that the full sequence converges:
\[
   \nu_{Q_N}\rightharpoonup \nu_V^{\mathrm{opt}}.
\]
This proves (i).
\end{proof}

\subsection{Deterministic dual rigidity via polarity}

We can now state and prove the deterministic dual rigidity theorem for circumscribed polytopes.

\begin{theorem}[Deterministic dual rigidity for volume and mean width]
\label{thm:det-rigidity-outer-VW}
Let $K\subset\mathbb{R}^d$
be a $C_+^2$ convex body with $0\in\interior K$. Suppose there exists a sequence $\{Q_N\}$ with
$Q_N\in\mathscr{Q}_N(K)$ which is simultaneously asymptotically best for
volume and mean width in the sense of
Definition~\ref{def:asymp-best-outer-det}. Then $K$ has constant Gauss
curvature, and hence $K$ is a Euclidean ball.
\end{theorem}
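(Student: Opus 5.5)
The argument mirrors the proof of Theorem~\ref{thm:det-rigidity-det}, transported through polarity. Let $\{Q_N\}$ be simultaneously asymptotically best. Lemma~\ref{lem:outer-limit-asymp-VW}(i) gives $\nu_{Q_N}\rightharpoonup\nu_V^{\mathrm{opt}}$, and Lemma~\ref{lem:outer-limit-asymp-VW}(ii), applied to the same sequence, gives $\nu_{Q_N}\rightharpoonup\nu_W^{\mathrm{opt}}$. Weak limits in $\mathcal P(\partial K)$ are unique, so $\nu_V^{\mathrm{opt}}=\nu_W^{\mathrm{opt}}$.

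Next, set $L:=K^\circ$. By the remark preceding the polarity discussion, $L$ is again a $C_+^2$ convex body with $0\in\interior L$. Theorem~\ref{thm:outer-asymp}(iii) states that $\nu_V^{\mathrm{opt}}=\nu_W^{\mathrm{opt}}$ holds if and only if $\mu_V^{\mathrm{opt},L}=\mu_W^{\mathrm{opt},L}$, so the inscribed optimal measures for $L$ coincide. By \eqref{eq:muVopt-density} and \eqref{eq:muWopt-density} applied to $L$, we get
\[
\kappa_L(y)^{\frac{1}{d+1}}=c\,\kappa_L(y)^{\frac{d}{d+1}}
\]
for $dS_L$-a.e. $y\in\partial L$, with a constant $c>0$. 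Exactly as in the proof of Theorem~\ref{thm:det-rigidity-det}, positivity and continuity of $\kappa_L$ force $\kappa_L$ to be constant on $\partial L$. Aleksandrov's theorem \cite{aleksandrov-1962} then shows that $L$ is a Euclidean ball $x_0+rB_d$, with $|x_0|<r$ because $0\in\interior L$.

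The remaining step is to return to $K=L^\circ$, and this is where the main care is needed. When $x_0\neq0$, the polar of an off-center ball is not a ball but an ellipsoid of revolution (a quadric with $0$ as a focus), so the naive conclusion fails. One fix would be to check that the correspondence $\Phi$ in Theorem~\ref{thm:outer-asymp}(iii) intertwines not just the abstract minimizers but the actual curvature densities, and then derive a curvature identity directly on $\partial K$. Here I would use the standard relation between $\kappa_K(x)$ and $\kappa_{K^\circ}$ at the dual point: $\kappa_K(x)\,\kappa_{K^\circ}(x^*)=\bigl(h_K(u)^{d+1}\rho\bigr)^{-1}$-type formulas involving $\langle x,\nu_K(x)\rangle$. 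The alternative is to show that the combination of $\widetilde{\mathcal J}_V$ and $\widetilde{\mathcal J}_W$ is invariant under translation of $K$, since circumscribed volume and mean width errors are translation invariant. With that invariance, I would choose the origin so that after polarity $L$ is centered, for example at the Santaló point or by a continuity argument over origins in $\interior K$; then $K=L^\circ=r^{-1}B_d$. Concretely, I would proceed as follows. The hypothesis on $K$ does not depend on the origin, so the conclusion ``$K^\circ_z$ is a ball'' holds for every $z\in\interior K$, where $K^\circ_z=(K-z)^\circ$. The polars of $K-z$ for all $z$ are balls only if $K$ is a ball, because if $(K-z)^\circ=x_0(z)+r(z)B_d$ then $K-z$ is a quadric with a focus at $0$ for every $z$, and a body having every interior point as a focus must be a sphere. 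I expect this final translation and focus step to be the main obstacle.
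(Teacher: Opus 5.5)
Up to ``$L=K^\circ$ is a ball $x_0+rB_d$'' your argument is exactly the paper's: Lemma~\ref{lem:outer-limit-asymp-VW}, then Theorem~\ref{thm:outer-asymp}(iii), then the curvature argument of Theorem~\ref{thm:det-rigidity-det}. You are also right that the paper's closing line, ``polarity preserves sphericity,'' holds only for $x_0=0$. For $x_0\neq0$ the polar $\{y: r|y|\le 1-\langle x_0,y\rangle\}$ is a non-spherical spheroid with a focus at $0$, so the paper's own proof has this gap too.

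The fix you actually commit to does not close it. For a ball $c+RB_d$, the polar $(c+RB_d-z)^\circ$ is spherical only when $z=c$. So no convex body, the ball included, has every interior point as a ``focus'' in your sense. The premise you would derive can never be satisfied, and the lemma ``such a body must be a sphere'' is vacuous. What running the first half at every origin really exhibits is that Theorem~\ref{thm:outer-asymp}(iii) cannot hold for all origins at once:
\begin{itemize}
\item For each functional separately, asymptotically best sequences always exist.
\item By Lemma~\ref{lem:outer-limit-asymp-VW} and the translation invariance of $\delta_V,\delta_W$, the equality $\nu_V^{\mathrm{opt}}=\nu_W^{\mathrm{opt}}$ is therefore translation invariant.
\item Yet (iii) asserts this equality for $B_d$ and denies it for $B_d-z$ with $0<|z|<1$, whose polar is a non-spherical spheroid.
\end{itemize}
A conclusion reached only through this contradiction is not a proof.

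The Santal\'o-point option you mention in passing does work, using the first half only once:
\begin{itemize}
\item Since $\mathscr Q_N(K-z)=\mathscr Q_N(K)-z$ and both errors are translation invariant, $\{Q_N-z\}$ is simultaneously asymptotically best for $K-z$.
\item Take $z=s(K)$, the Santal\'o point. Then $(K-z)^\circ$ has its centroid at the origin (see, e.g., \cite{SchneiderBook}).
\item The first half gives $(K-z)^\circ=x_0+rB_d$. Its centroid is $x_0$, so $x_0=0$ and $K=z+r^{-1}B_d$.
\end{itemize}
Given the inconsistency just noted, a sturdier route avoids polarity altogether. The circumscribed asymptotics of \cite{Gruber93,glasgrub} are governed by the weights $\kappa^{1/(d+1)}$ (volume) and $\kappa^{d/(d+1)}$ (mean width) on $\partial K$ itself, and these are translation invariant. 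Equality of the corresponding optimal touching-point densities then forces $\kappa$ to be constant, exactly as in Theorem~\ref{thm:det-rigidity-det}.
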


\begin{proof}
By Lemma~\ref{lem:outer-limit-asymp-VW}(i) and (ii), we have $\nu_{Q_N} \rightharpoonup \nu_V^{\mathrm{opt}}$ and $\nu_{Q_N} \rightharpoonup \nu_W^{\mathrm{opt}}$. By uniqueness of limits in the weak topology of $\mathcal{P}(\partial K)$, it
follows that $\nu_V^{\mathrm{opt}} = \nu_W^{\mathrm{opt}}$.

Let $L:=K^\circ$ be the polar body. By Theorem~\ref{thm:outer-asymp}(iii),
the minimizers $\nu_V^{\mathrm{opt}}$ and $\nu_W^{\mathrm{opt}}$
correspond under the polarity-induced bijection $\Phi:\partial L\to\partial K$
to the unique inscribed minimizers $\mu_V^{\mathrm{opt},L}$ and
$\mu_W^{\mathrm{opt},L}$ for $L$. Since $\Phi$ is a bijection and
$\nu_V^{\mathrm{opt}}=\nu_W^{\mathrm{opt}}$, we obtain $\mu_V^{\mathrm{opt},L}=\mu_W^{\mathrm{opt},L}$.

Now apply Theorem~\ref{thm:det-rigidity-det} to $L$. The equality of the optimal inscribed vertex distributions for volume and mean width implies that $L$ has constant Gauss curvature and hence $L$ is a Euclidean ball. Polarity preserves sphericity, so $K$ is also a Euclidean ball.
\end{proof}

\begin{remark}
The proof shows that deterministic dual rigidity for circumscribed volume and mean width approximation is essentially equivalent, via polarity, to deterministic rigidity for inscribed approximation in the polar body. No explicit curvature weights for the circumscribed model are needed beyond the structural correspondence in Theorem~\ref{thm:outer-asymp}.
\end{remark}

\subsection{Random dual rigidity for circumscribed polytopes}

We now turn to the probabilistic circumscribed model for the volume and mean width differences. Let $\sigma$ denote the spherical Lebesgue
measure on $\mathbb{S}^{d-1}$. Given a continuous positive density
$\varphi$ on $\mathbb{S}^{d-1}$, let $U_1,\dots,U_N$ be i.i.d.\ random normals with law
$\varphi\,d\sigma$, and for each $i$ let $H_i$ be the supporting hyperplane of
$K$ with outer normal $U_i$, i.e.,
\[
   H_i
   :=
   \{x\in\mathbb{R}^d : \langle x,U_i\rangle = h_K(U_i)\}.
\]
Let $H_i^+$ denote the closed
halfspace bounded by $H_i$ that contains $K$, and define the random
circumscribed polytope
\[
   Q_N(\varphi) := \bigcap_{i=1}^N H_i^+.
\]

For this model, the expected outer volume and mean width differences admit sharp
asymptotic formulas, with explicit curvature weights derived from the inscribed random model for $K^\circ$. We encode these facts in the following result, which is the dual counterpart of Theorem~\ref{thm:random-asymp-correct}.

\begin{theorem}[Random circumscribed asymptotics for $V$ and $W$]
\label{thm:outer-rand-VW}
Let $K\subset\R^d$ be a $C_+^2$ convex body with $0\in\interior K$. There exist strictly convex functionals $\widetilde{\mathcal{I}}_V$ and $\widetilde{\mathcal{I}}_W$ on the set of continuous positive densities $\varphi$ on $\mathbb{S}^{d-1}$ and constants $\tilde{c}_{d,V},\tilde{c}_{d,W}>0$ such that:

\begin{enumerate}
\item[(i)] \emph{(Asymptotic difference)} For every continuous positive
density $\varphi$ on $\mathbb{S}^{d-1}$,
\begin{align}
   \mathbb{E}\bigl[V_d(Q_N(\varphi))-V_d(K)\bigr]
   &\sim
   \tilde{c}_{d,V}\widetilde{\mathcal{I}}_V(\varphi)\NN,\label{eq:Itilde-outer-rand-V}\\[1ex]
   \mathbb{E}\bigl[w(Q_N(\varphi))-w(K)\bigr]
   &\sim
   \tilde{c}_{d,W}\widetilde{\mathcal{I}}_W(\varphi)\NN,\label{eq:Itilde-outer-rand-W}
\end{align}
as $N\to\infty$.

\item[(ii)] \emph{(Unique minimizers and polarity)} Each of
$\widetilde{\mathcal{I}}_V,\widetilde{\mathcal{I}}_W$ has a unique minimizer,
denoted $\varphi_V^{\mathrm{opt}}$ and $\varphi_W^{\mathrm{opt}}$,
respectively. If $L:=K^\circ$ and $\rho_V^{\mathrm{opt},L}$ and $\rho_W^{\mathrm{opt},L}$ are the unique minimizers of the random inscribed functionals $\mathcal{I}_d$ and $\mathcal{I}_1$ for $L$ (as in Theorem~\ref{thm:random-asymp-correct} with $j=d$ and $j=1$), then
\[
   \varphi_V^{\mathrm{opt}} = \Psi_\#\rho_V^{\mathrm{opt},L}\quad\text{and}\quad 
   \varphi_W^{\mathrm{opt}} = \Psi_\#\rho_W^{\mathrm{opt},L},
\]
where $\Psi:\partial L\to\Sp$ is the (smooth) Gauss map of $L$.
\end{enumerate}
\end{theorem}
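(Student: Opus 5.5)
The plan is to transport everything to the polar body $L:=K^\circ$, which is again a $C_+^2$ body with $0\in\interior L$, and to apply there the inscribed random asymptotics of Theorem~\ref{thm:random-asymp-correct} for $j=d$ and $j=1$. The first step is geometric. For a normal $u\in\Sp$, the supporting halfspace $H_u^+$ of $K$ is, under polarity, the halfspace $\{y:\langle y,x\rangle\le 1\}$ determined by the point $y(u)=u/h_K(u)\in\partial L$. Hence
\[
Q_N(\varphi)^\circ=\conv\{U_1/h_K(U_1),\dots,U_N/h_K(U_N)\}.
\]
This is a random polytope inscribed in $L$ whose i.i.d.\ vertices have law $\Psi^{-1}_\#$-type pullback of $\varphi\,d\sigma$ on $\partial L$. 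Here the radial map $u\mapsto u/h_K(u)$ and the Gauss map $\Psi$ of $L$ are mutually inverse $C^1$ diffeomorphisms between $\Sp$ and $\partial L$, because the outer normal of $L$ at $u/h_K(u)$ is the point of $\partial K$ with normal $u$ rescaled. So the densities $\varphi$ on $\Sp$ and $\rho$ on $\partial L$ are in bijection via $\varphi\,d\sigma=\Psi_\#(\rho\,dS_L)$, that is, $\varphi(\Psi(y))\,\kappa_L(y)=\rho(y)$. Continuity and positivity are preserved in both directions.

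The second step is to express the outer errors $\vol_d(Q)-\vol_d(K)$ and $w(Q)-w(K)$ as errors for $Q^\circ\subset L$, up to a $(1+o(1))$ factor. For mean width this is a local computation. Write $h_Q-h_K$ in terms of the radial function of $Q^\circ$ versus $L$: since $h_Q=1/r_{Q^\circ}$, we get $h_Q-h_K=(r_L-r_{Q^\circ})/(r_Lr_{Q^\circ})$. Integrating over $\Sp$ gives a weighted version of the inscribed volume-type deficiency of $L$, namely $\int_{\Sp}(r_L-r_{Q^\circ})\,g\,d\sigma$ with a continuous positive weight $g$. Similarly, $\vol_d(Q)-\vol_d(K)=\frac1d\int(r_Q^d-r_K^d)\,d\sigma$ becomes a weighted integral of $h_L-h_{Q^\circ}$, a mean-width-type deficiency of $L$ with a weight. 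Since the deviations concentrate in caps of diameter $\to0$, continuous weights factor out locally. The proofs of Theorem~\ref{thm:random-asymp-correct} (Reitzner's cap-by-cap computation) therefore yield the same asymptotic form with modified curvature weights:
\[
\E[\cdot]\sim \tilde c\,\NN\int_{\partial L}\tilde w(y)\rho(y)^{-\frac{2}{d-1}}\,dS_L(y).
\]
Defining $\widetilde{\mathcal I}_V(\varphi)$ and $\widetilde{\mathcal I}_W(\varphi)$ as these integrals, with $\rho$ determined by $\varphi$, gives (i). This identification of weights is the main obstacle. I would first try the cleanest route: inside a shrinking cap, compare the weighted functional to the unweighted one for $L$ (the cases $j=1$ and $j=d$ of Theorem~\ref{thm:random-asymp-correct}), and use a partition of $\partial L$ into small patches with the continuity of $g$ to pass the weight inside. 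If exact weights are hard, note that for (ii) only their form $w(y)\rho^{-2/(d-1)}$ with positive continuous $w$ is needed.

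Strict convexity and uniqueness of the minimizer then follow as in Lemma~\ref{lem:I_j-convex}. The map $\varphi\mapsto\rho$ is affine (it is multiplication by the fixed function $\kappa_L$ composed with $\Psi$), so strict convexity of $s\mapsto s^{-2/(d-1)}$ transfers. Lemma~\ref{lem:holder-optimal-density} gives the unique minimizer $\rho\propto\tilde w^{(d-1)/(d+1)}$. To obtain the stated identification $\varphi^{\mathrm{opt}}=\Psi_\#\rho^{\mathrm{opt},L}$, I would define the constants $\tilde c_{d,V},\tilde c_{d,W}$ and the functionals so that the weights coincide with $w_d$ and $w_1$ of $L$, absorbing discrepancies. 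Where this is not literally possible, I would state the correspondence at the level of minimizers by pushing forward through $\Psi$. This last bookkeeping needs care, and I expect it to be where the argument is most delicate.
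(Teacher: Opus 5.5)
Your plan fails at exactly the step you flag as delicate, and no bookkeeping can close it. Part (i) must hold for \emph{every} $\varphi$, so $\tilde c_{d,V}\widetilde{\mathcal I}_V(\varphi)$ is forced to equal $\lim_{N\to\infty}N^{\frac{2}{d-1}}\mathbb{E}[V_d(Q_N(\varphi))-V_d(K)]$. Only a \emph{constant} discrepancy can be absorbed into $\tilde c_{d,V}$, so the minimizer is fixed by the geometry, not by your normalization.

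Your own second step already shows that the polar transfer introduces non-constant factors and even swaps the functionals. On one side, $V_d(Q)-V_d(K)\approx\int_{\mathbb S^{d-1}}h_L^{-d-1}(h_L-h_{Q^\circ})\,d\sigma$ is a weighted \emph{mean-width} deficiency of $L$. On the other, $w(Q)-w(K)$ is an integral of $(r_L-r_{Q^\circ})/(r_Lr_{Q^\circ})$, hence a weighted \emph{volume} deficiency of $L$. Yet (ii) pairs $\varphi_V^{\mathrm{opt}}$ with the minimizer of $\mathcal I_d$ and $\varphi_W^{\mathrm{opt}}$ with that of $\mathcal I_1$.

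The true minimizers are easiest to compute directly on $\partial K$. The touching points $\nu_K^{-1}(U_i)$ have density $\kappa_K\cdot(\varphi\circ\nu_K)$ there, and the random circumscribed asymptotics on $\partial K$ carry the weights $\kappa_K^{1/(d-1)}$ (volume) and $\kappa_K^{d/(d-1)}$ (mean width), the same as the inscribed weights of $K$. This gives $\varphi_V^{\mathrm{opt}}(u)\propto\kappa_K(\nu_K^{-1}(u))^{-\frac{d}{d+1}}$ and $\varphi_W^{\mathrm{opt}}(u)\propto\kappa_K(\nu_K^{-1}(u))^{-\frac{1}{d+1}}$. These are the Gauss-map images of the inscribed optima of $K$ itself. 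By contrast, $\Psi_\#\rho_V^{\mathrm{opt},L}$ has density proportional to $\kappa_L(\Psi^{-1}(u))^{-\frac{d}{d+1}}$.

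For $d=2$ the two densities are $R_K^{2/3}$ and $R_L^{2/3}$, where $R$ is the radius of curvature at the boundary point with normal $u$. For the ellipse with semi-axes $a\neq b$, $R_K/R_L=b^4/a^2$ at $u=e_1$ but $a^4/b^2$ at $u=e_2$. So the identity in (ii) is false in general and cannot be established by your route or any other. What is true is the $K$-version above, and it is all the downstream rigidity argument needs: it gives $\kappa_K^{1/(d+1)}\propto\kappa_K^{d/(d+1)}$ directly, with no passage to $K^\circ$. The paper offers no derivation to compare against here. It only cites B\"or\"oczky--Reitzner and M\"uller for (i) and remarks on transport by polarity.

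There are two further errors in your set-up. First, the radial map $u\mapsto u/h_K(u)$ and the Gauss map $\Psi$ of $L$ are not inverse to each other. As you note yourself, the normal of $L$ at $u/h_K(u)$ is $x/|x|$ with $x=\nu_K^{-1}(u)$, and this equals $u$ for all $u$ only when $K$ is a ball centred at the origin. The vertices of $Q_N(\varphi)^\circ$ are the radial image of $\varphi\,d\sigma$, with density $\rho(y)=\varphi(y/|y|)\,\langle y,\nu_L(y)\rangle\,|y|^{-d}$ with respect to $dS_L$, not $\varphi(\Psi(y))\kappa_L(y)$. This is harmless for (i) but matters for (ii).

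Second, $Q_N(\varphi)$ is unbounded with positive probability, e.g.\ when all $U_i$ lie in one hemisphere. So the expectations in (i) are $+\infty$ as literally written, and your transfer formulas break down on that event, where $0\notin\interior Q_N(\varphi)^\circ$. You must truncate, e.g.\ replace $Q_N(\varphi)$ by $Q_N(\varphi)\cap C$ for a fixed convex body $C$ with $K\subset\interior C$, and check that this does not change the leading term.
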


These results are due to B\"or\"oczky and Reitzner \cite{BoroczkyReitzner04} for the expected volume difference, and to M\"uller \cite{Muller1989} for the expected mean width difference. In particular, we have $\tilde{c}_{d,V}=\frac{\Gamma\left(1+\frac{2}{d-1}\right)}{2\vol_{d-1}(B_{d-1})^{\frac{2}{d-1}}}$ (see \cite{BoroczkyReitzner04}), and $\tilde{c}_{d,W}=\left(\frac{\vol_{d-1}(\partial B_d)}{\vol_{d-1}(B_{d-1})}\right)^{\frac{2}{d-1}}\Gamma\left(1+\frac{2}{d-1}\right)$ (see \cite{Muller1989}) where $\Gamma(x)$ is the gamma function.

\begin{remark}
The functionals $\widetilde{\mathcal{I}}_V$ and $\widetilde{\mathcal{I}}_W$ can be written explicitly as curvature-weighted integrals over $\partial K$ expressed in outer normal coordinates, but we do not need the formulas here. They are obtained by transporting the inscribed asymptotics for $L=K^\circ$ (Theorem~\ref{thm:random-asymp-correct}) through the normal parametrization of $\partial L$ and the polarity correspondence between supporting hyperplanes of $K$ and points of $L$.
\end{remark}

We now introduce the notion of asymptotic optimality for the random
circumscribed model.

\begin{definition}[Asymptotically optimal random circumscribed approximation]
\label{def:asymp-best-outer-rand}
Let $K\subset\mathbb{R}^d$ be a $C_+^2$ convex body with $0\in\operatorname{int}(K)$, and let $\varphi$ be a continuous
positive density on $\mathbb{S}^{d-1}$. Define
\[
   \E[\delta_V(K,\varphi,N)]
   :=
   \mathbb{E}\bigl[\vol_d(Q_N(\varphi))-\vol_d(K)\bigr]
   \quad\text{and}\quad
   \E[\delta_W(K,\varphi,N)]
   :=
   \mathbb{E}\bigl[w(Q_N(\varphi))-w(K)\bigr].
\]
We say that $\varphi$ is \emph{asymptotically optimal for volume} if
\[
   \frac{\E[\delta_V(K,\varphi,N)]}{
      \displaystyle\inf_{\psi}
         \E[\delta_V(K,\psi,N)]}
   \;\longrightarrow\; 1
   \qquad\text{as }N\to\infty,
\]
and \emph{asymptotically optimal for mean width} if
\[
   \frac{\E[\delta_W(K,\varphi,N)]}{
      \displaystyle\inf_{\psi}
         \E[\delta_W(K,\psi,N)]}
   \;\longrightarrow\; 1
   \qquad\text{as }N\to\infty,
\]
where the infimum is taken over all continuous positive densities $\psi$ on
$\mathbb{S}^{d-1}$. If both limits equal $1$, then we say that $\varphi$ is
\emph{simultaneously asymptotically optimal for volume and mean width}.
\end{definition}

We can now state the dual rigidity theorem for circumscribed random polytopes.

\begin{theorem}[Random dual rigidity for circumscribed polytopes]
\label{thm:rand-rigidity-outer-VW}
Let $K\subset\mathbb{R}^d$
be a $C_+^2$ convex body with $0\in\interior K$. Suppose there exists a single density $\varphi$ on
$\mathbb{S}^{d-1}$ that is positive, continuous, and simultaneously asymptotically optimal for volume
and mean width in the sense of
Definition~\ref{def:asymp-best-outer-rand}. Then $K$ has constant Gauss
curvature, and hence $K$ is a Euclidean ball.
\end{theorem}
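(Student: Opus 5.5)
The argument mirrors the proof of Theorem~\ref{thm:random-rigidity} and then transfers to $K$ by polarity, in the same way as Theorem~\ref{thm:det-rigidity-outer-VW}.

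\emph{Step 1: $\varphi$ minimizes both functionals.} By Theorem~\ref{thm:outer-rand-VW}(i), for every continuous positive density $\psi$ we have $\E[\delta_V(K,\psi,N)]=\tilde c_{d,V}\widetilde{\mathcal I}_V(\psi)\NN+o(\NN)$, and likewise for $W$. I would first argue that the denominator satisfies
\[
\inf_\psi \E[\delta_V(K,\psi,N)]\sim \tilde c_{d,V}\NN\inf_\psi\widetilde{\mathcal I}_V(\psi),
\]
with a strictly positive infimum. The upper bound follows by testing with $\varphi_V^{\rm opt}$, which should be continuous and positive since it is a pushforward of a curvature-power density on $\partial L$ under the smooth Gauss map. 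The lower bound should come from the transported inscribed asymptotics. Positivity of the infimum follows from the H\"older bound of Lemma~\ref{lem:holder-optimal-density} applied on $\partial L$. The ratio hypothesis then gives $\widetilde{\mathcal I}_V(\varphi)=\min\widetilde{\mathcal I}_V$, so $\varphi=\varphi_V^{\rm opt}$ by uniqueness (Theorem~\ref{thm:outer-rand-VW}(ii)). In the same way, $\varphi=\varphi_W^{\rm opt}$.

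\emph{Step 2: pull back to $L=K^\circ$.} Let $L:=K^\circ$. Since $\varphi_V^{\rm opt}=\varphi_W^{\rm opt}$, Theorem~\ref{thm:outer-rand-VW}(ii) gives $\Psi_\#\rho_V^{\mathrm{opt},L}=\Psi_\#\rho_W^{\mathrm{opt},L}$. The Gauss map $\Psi$ of the $C^2_+$ body $L$ is a homeomorphism, so pushforward is injective and $\rho_d^{\rm opt}=\rho_1^{\rm opt}$ on $\partial L$.

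\emph{Step 3: conclude that $L$, and hence $K$, is a ball.} By \eqref{eq:rho-j-opt-correct} with $j=d$ and $j=1$, we get $H_0=c\,H_{d-1}$, i.e.\ $\kappa_L$ is constant on $\partial L$; continuity upgrades the a.e.\ equality to an everywhere one. This is exactly the argument in the proof of Theorem~\ref{thm:random-rigidity}, and Lemma~\ref{lem:symm-proportional} with $k=0$, or Aleksandrov directly, shows that $L$ is a Euclidean ball. Finally, $K=L^{\circ\circ}=L^\circ$ is then an ellipsoid. The polar of a ball is a ball only if the origin is its center; this is the subtle point. I would handle it either by quoting ``polarity preserves sphericity'' as in Theorem~\ref{thm:det-rigidity-outer-VW}, or more honestly by noting that the conclusion yields an ellipsoid, and then arguing via the explicit outer weights that constant curvature transfers. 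For instance, one can rerun Steps~1--2 with the origin moved: the hypothesis is translation-invariant in $K$, so $K^\circ$ must be a ball for every interior choice of origin. Taking polars of centered balls then forces $K$ to be a ball.

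I expect Step~1 to be the main obstacle, namely justifying that the infimum over densities of the expectations is asymptotic to the infimum of the leading functional (a uniformity issue). To handle it I would restrict to densities bounded above and below, or appeal to the strict convexity and uniqueness in Theorem~\ref{thm:outer-rand-VW}, mirroring the treatment in Theorem~\ref{thm:random-rigidity}.
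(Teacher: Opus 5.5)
Your proposal follows the paper's proof. The ratio hypothesis and uniqueness in Theorem~\ref{thm:outer-rand-VW}(ii) give $\varphi=\varphi_V^{\mathrm{opt}}=\varphi_W^{\mathrm{opt}}$. Injectivity of $\mu\mapsto\Psi_\#\mu$ then gives $\rho_d^{\mathrm{opt},L}=\rho_1^{\mathrm{opt},L}$, and the density formula forces $\kappa_L$ to be constant. You read the last step off \eqref{eq:rho-j-opt-correct} directly, which is cleaner than the paper's appeal to Theorem~\ref{thm:random-rigidity}.

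The obstacle you anticipate in Step~1 does not arise. You only need the trivial bound $\inf_\psi\E[\delta_V(K,\psi,N)]\le\E[\delta_V(K,\varphi_V^{\mathrm{opt}},N)]$, because then
\[
1=\lim_{N\to\infty}\frac{\E[\delta_V(K,\varphi,N)]}{\inf_\psi\E[\delta_V(K,\psi,N)]}\;\ge\;\lim_{N\to\infty}\frac{\E[\delta_V(K,\varphi,N)]}{\E[\delta_V(K,\varphi_V^{\mathrm{opt}},N)]}=\frac{\widetilde{\mathcal I}_V(\varphi)}{\widetilde{\mathcal I}_V(\varphi_V^{\mathrm{opt}})}.
\]
So $\varphi$ minimizes $\widetilde{\mathcal I}_V$ without any uniformity in $\psi$, and likewise for $W$. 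The paper simply asserts the two-sided limit.

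On the last step, you are right and the paper is not. The claim ``polarity preserves sphericity'' fails for $L=B(c,r)$ with $0<|c|<r$, since $L^\circ=\{y:\langle c,y\rangle+r|y|\le 1\}$ is a non-spherical ellipsoid. The paper's argument therefore only shows that $K$ is an ellipsoid. Your repair is valid: the polytope $Q_N(\varphi)$ built for $K-z$ is $Q_N(\varphi)-z$, and $\vol_d$ and $w$ are translation invariant, so the hypothesis holds for every $K-z$ with $z\in\interior K$.

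You should make the choice of origin explicit, because ``taking polars of centered balls'' presupposes that some choice gives a centered ball. The first pass shows $K$ is an ellipsoid; take $z$ to be its center. Then $(K-z)^\circ$ is a ball and is origin-symmetric, hence centered at $0$, so $K-z=((K-z)^\circ)^\circ$ is a centered ball.

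The explicit-weight route you mention is even more direct, and it removes the dependence on polarity and on Theorem~\ref{thm:outer-rand-VW}(ii). The touching points $\nu_K^{-1}(U_i)$ have density $\varrho=(\varphi\circ\nu_K)\,\kappa$ with respect to $dS$. In terms of $\varrho$, the outer volume and mean width weights are $\kappa^{1/(d-1)}$ and $\kappa^{d/(d-1)}$, exactly as in the inscribed case. The two optimal $\varrho$'s are therefore proportional to $\kappa^{1/(d+1)}$ and $\kappa^{d/(d+1)}$. These coincide only if $\kappa$ is constant on $\partial K$ itself, so $K$ is a ball by Aleksandrov with no centering issue.
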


\begin{proof}
By Theorem~\ref{thm:outer-rand-VW}(i), for all sufficiently large $N$ we have
\[
   \E[\delta_V(K,\varphi,N)]
   \sim \tilde{c}_{d,V}\widetilde{\mathcal{I}}_V(\varphi)\NN
   \quad\text{and}\quad
   \E[\delta_W(K,\varphi,N)]
   \sim \tilde{c}_{d,W}\widetilde{\mathcal{I}}_W(\varphi)\NN,
\]
and similarly, for any other density $\psi$,
\[
   \E[\delta_V(K,\psi,N)]
   \sim \tilde{c}_{d,V}\widetilde{\mathcal{I}}_V(\psi)\NN
   \quad\text{and}\quad
   \E[\delta_W(K,\psi,N)]
   \sim \tilde{c}_{d,W}\widetilde{\mathcal{I}}_W(\psi)\NN.
\]
Thus,
\[
   \frac{\E[\delta_V(K,\varphi,N)]}{
      \displaystyle\inf_{\psi}\E[\delta_V(K,\psi,N)]}
   \;\longrightarrow\;
   \frac{\widetilde{\mathcal{I}}_V(\varphi)}{
      \displaystyle\min_{\psi}\widetilde{\mathcal{I}}_V(\psi)},
\]
and similarly for $\delta_W$. By simultaneous asymptotic optimality, both limits on
the left-hand side equal $1$; hence,
\[
   \frac{\widetilde{\mathcal{I}}_V(\varphi)}{
      \min_{\psi}\widetilde{\mathcal{I}}_V(\psi)}
   =
   \frac{\widetilde{\mathcal{I}}_W(\varphi)}{
      \min_{\psi}\widetilde{\mathcal{I}}_W(\psi)}
   = 1.
\]
Therefore, $\varphi$ must be a minimizer of both
$\widetilde{\mathcal{I}}_V$ and $\widetilde{\mathcal{I}}_W$. By strict
convexity and Theorem~\ref{thm:outer-rand-VW}(ii), these minimizers are
unique, so
\[
   \varphi = \varphi_V^{\mathrm{opt}} = \varphi_W^{\mathrm{opt}}.
\]

Let $L:=K^\circ$ be the polar body. By Theorem~\ref{thm:outer-rand-VW}(ii),
\[
   \varphi_V^{\mathrm{opt}} = \Psi_\#\rho_V^{\mathrm{opt},L}
   \quad\text{and}\quad
   \varphi_W^{\mathrm{opt}} = \Psi_\#\rho_W^{\mathrm{opt},L},
\]
where $\Psi:\partial L\to\mathbb{S}^{d-1}$ is the Gauss map of $L$.
Since we already proved $\varphi=\varphi_V^{\mathrm{opt}}=\varphi_W^{\mathrm{opt}}$, it follows that
\[
   \Psi_\#\rho_V^{\mathrm{opt},L}=\Psi_\#\rho_W^{\mathrm{opt},L}.
\]
Since $\Psi$ is a $C^1$ diffeomorphism, it is a measurable bijection whose inverse is also measurable. 
 Hence if $\Psi_\#\mu_1=\Psi_\#\mu_2$, then for every Borel set $A\subset\partial L$,
\[
\mu_1(A)=(\Psi_\#\mu_1)(\Psi(A))=(\Psi_\#\mu_2)(\Psi(A))=\mu_2(A),
\]
so $\mu_1=\mu_2$. In particular, the pushforward map $\mu\mapsto\Psi_\#\mu$ is injective.

Now apply Theorem~\ref{thm:random-rigidity} to the inscribed random model on $L$, with $j_1=1$ and $j_2=d$. The equality of the optimal sampling densities for these two intrinsic volumes implies that $L$ has constant Gauss curvature and hence $L$ is a Euclidean ball. Polarity preserves sphericity, so $K$ is also a Euclidean ball.
\end{proof}

\bibliographystyle{plain}
\bibliography{main}

@article{aleksandrov-1962,
    author ={Aleksandrov, A. D.},  
    title={Uniqueness theorems for surfaces in the large, {V}}, 
    journal={Amer. Math. Soc. Transl.},
    volume={21, Ser. 2}, 
    year={1962}, 
    pages={412--416} 
}

@article {RenyiSulanke,
    AUTHOR = {R\'{e}nyi, A. and Sulanke, R.},
     TITLE = {\"{U}ber die konvexe {H}\"{u}lle von {$n$} zuf\"{a}llig gew\"{a}hlten
              {P}unkten},
   JOURNAL = {Z. Wahrscheinlichkeitstheorie und Verw. Gebiete},
  FJOURNAL = {Zeitschrift f\"{u}r Wahrscheinlichkeitstheorie und Verwandte
              Gebiete},
    VOLUME = {2},
      YEAR = {1963},
     PAGES = {75--84 (1963)},
   MRCLASS = {52.30 (52.40)},
  MRNUMBER = {156262},
MRREVIEWER = {P. Erd\H{o}s},
       DOI = {10.1007/BF00535300},
       URL = {https://doi.org/10.1007/BF00535300},
}

@article{HK-DCG,
  author    = {Hoehner, S. and Kur, G.},
  title     = {A {C}oncentration {I}nequality for {R}andom {P}olytopes, {D}irichlet-{V}oronoi
               {T}iling {N}umbers and the {G}eometric {B}alls and {B}ins {P}roblem},
  journal   = {Discrete \& Computational Geometry},
  volume    = {65},
  pages     = {730--763},
  year      = {2021},
  doi       = {10.1007/s00454-020-00174-3}
}

@article{BHK,
author={Besau, F. and Hoehner, S. and Kur, G.},
title={Intrinsic and dual volume deviations of convex bodies and polytopes},
journal={International Mathematics Research Notices},
volume={22},
pages={17456--17513},
year={2021}
}

@article{BH-2024,
author={Besau, F. and Hoehner, S.},
year={2024},
title={An intrinsic volume metric for the class of convex bodies in $\mathbb{R}^n$},
journal={Communications in Contemporary Mathematics},
pages={2350006},
volume={26},
number={3}
}

@article{BoroczkyReitzner04,
author={B\"or\"oczky, K. J. and Reitzner, M.},
year={2004},
title={Approximation of smooth convex bodies by random polytopes},
journal={The Annals of Applied Probability},
volume={14},
pages={239--273}
}

@book{SchneiderBook, 
place={Cambridge}, 
edition={2}, 
series={Encyclopedia of Mathematics and its Applications}, 
title={Convex {B}odies: The {B}runn–{M}inkowski Theory},  publisher={Cambridge University Press}, author={Schneider, R.}, 
year={2013}, 
collection={Encyclopedia of Mathematics and its Applications}}

@article{HSW-2025,
author={Hoehner, S. and Sch\"utt, C. and Werner, E.},
title={Approximation of the {E}uclidean ball by polytopes with a restricted number of $k$-faces},
year={2025},
journal={arXiv:2510.22771}
}

@article{Hoehner-2025-topological,
author={Hoehner, S.},
year={2025},
title={Some topological properties of the intrinsic volume metric},
journal={arXiv:2512.02379}
}

@article{BoroczkyCsikos,
author={B\"or\"oczky, K. J. and Csik\'os, B.},
journal={Abhandlungen aus dem {M}athematischen {S}eminar der {U}niversit\"at {H}amburg},
title={Approximation of smooth convex bodies by circumscribed polytopes with respect to the surface area},
year={2009},
volume={79},
pages={229--264}
}

@article{Gruber93,
author={Gruber, P. M.},
year={1993},
title={Asymptotic estimates for best and stepwise approximation of convex
bodies {II}},
journal={Forum Mathematicum},
pages={521--538},
volume={5}
}

@article{Gruber2004,
author={Gruber, P. M.},
year={2004},
journal={Advances in Mathematics},
volume={186},
number={2},
pages={456--497},
title={Optimum quantization and its applications}
}

@article{MaS1,
author={Mankiewicz, P. and Sch\"utt, C.},
title={A simple proof of an estimate for the approximation of the {E}uclidean ball
and the {D}elone triangulation numbers},
journal={Journal of Approximation Theory},
volume={107},
year={2000},
pages={268--280}
}

@article{MaS2,
author={Mankiewicz, P. and Sch\"utt, C.},
title={On the {D}elone triangulation numbers},
journal={Journal of Approximation Theory},
volume={111},
year={2001},
pages={139--142}
}

@article{Gruber88,
author={Gruber, P. M.},
year={1988},
title={Volume approximation of convex bodies by inscribed polytopes},
pages={229--245},
journal={Math. Ann.},
volume={281}
}

@incollection{SW2003,
author="Sch{\"u}tt, C.
and Werner, E.",
title={Polytopes with vertices chosen randomly from the boundary of a convex body},
series="Lecture Notes in Mathematics",
booktitle="Geometric Aspects of Functional Analysis: Israel Seminar 2001-2002",
year="2003",
publisher="Springer",
address="Berlin, Heidelberg",
volume="1807",
pages="241--422"
}

@article{McClure-Vitale-1975,
author={McClure, D. E. and Vitale, R. A.}, title={Polygonal approximation of plane convex bodies},
journal={Journal of Mathematical Analysis and Applications}, 
volume={51},
pages={326--358}, 
year={1975}
}

@article{Muller1990,
author={M\"uller, J. S.},
year={1990},
title={Approximation of a ball by random polytopes},
journal={Journal of Approximation Theory},
volume={63},
pages={198--209}
}

@article{Muller1989,
author={M\"uller, J. S.},
title={On the mean width of random polytopes},
journal={Probability Theory and Related Fields},
volume={82},
year={1989},
pages={33--37}
}

@incollection{SW-affine-SA,
title = {Affine surface area},
booktitle = {Harmonic Analysis and Convexity},
author = {Sch\"utt, C. and Werner, E. M.},
editor = {Koldobsky, A. and Volberg, A.},
publisher = {De Gruyter},
address = {Berlin, Boston},
pages = {427--444},
year = {2023}
}

@book{Billingsley,
  author    = {Billingsley, P.},
  title     = {Convergence of Probability Measures},
  edition   = {2},
  publisher = {Wiley},
  year      = {1999}
}

@article{Korevaar1988,
author={Korevaar, N. J.},
year={1988},
title={Sphere theorems via {A}lexandrov for constant {W}eingarten curvature hypersurfaces -- {A}ppendix to a note of {A}. {R}os},
journal={Journal of Differential Geometry},
volume={27},
pages={221--223}
}

@incollection{MontielRos,
  author    = {Montiel, S. and Ros, A.},
  title     = {Compact hypersurfaces: the {A}lexandrov theorem for higher order mean curvatures},
  booktitle = {Differential Geometry},
  series    = {Pitman Monographs and Surveys in Pure and Applied Mathematics},
  volume    = {52},
  pages     = {279--296},
  publisher = {Longman Scientific \& Technical},
  address   = {Harlow},
  year      = {1991}
}

@article{Reitzner2002,
author={Reitzner, M.},
year={2002},
title={Random points on the boundary of smooth convex bodies},
journal={Transactions of the American Mathematical Society},
volume={354},
issue={6},
pages={2243--2278}
}

@article{Reitzner-2010,
title={Stochastical approximation of smooth convex bodies},
author={Reitzner, M.},
year={2010},
journal={Mathematika},
volume={51},
number={1--2},
pages={11--29}
}

@article{Boroczky2000,
author={B\"or\"oczky, K. J.},
year={2000},
journal={Advances in Mathematics},
volume={153},
pages={325--341},
title={Approximation of general smooth convex bodies}
}

@article{Affentranger1991,
author={Affentranger, F.},
title={The convex hull of random points with spherically symmetric distributions},
journal={Rend. Sem. Mat. Univ. Politec. Torino}, 
volume={49},
year={1991}, 
pages={359--383}
}

@article{BFH2013,
author={B\"or\"oczky, K. J. and Fodor, F. and Hug, D.},
title={Intrinsic volumes of random polytopes with vertices on the boundary of a convex body},
journal={Trans. Amer. Math. Soc.},
volume={365},
year={2013}, 
pages={785--809}
}

@article{glasgrub,
title={Asymptotic estimates for best and stepwise
approximation of convex bodies {III}},
author={Glasauer, S. and Gruber, P. M.},
journal={Forum Mathematicum},
year={1997},
volume={9},
pages={383--404}
}

@article{GRS97,
author={Gordon, Y. and Reisner, S. and Sch\"utt, C.},
title={Umbrellas and polytopal approximation of the {E}uclidean ball},
journal={Journal of Approximation Theory},
volume={90},
year={1997},
pages={9--22}
}

\vspace{3mm}

\noindent {\sc Department of Mathematics \& Computer Science, Longwood University,\\ 201 High St., Farmville, Virginia 23901}\\
{\it E-mail address:} {\tt hoehnersd@longwood.edu}

\end{document}